\documentclass[11pt]{amsart}
\setlength{\textheight} {9.2 in} \setlength{\textwidth} {6 in}
\voffset -0.6 in \hoffset -0.4 in \topmargin 0.4 in
\setlength{\evensidemargin} {0.5in} \setlength{\oddsidemargin}{0.5 in} 
\setlength {\columnsep}{1 mm} \baselineskip 1 mm
\setlength{\parindent}{1cm}
\usepackage{amssymb}
\usepackage{bbm}
\usepackage{bm}
\usepackage{hyperref}
\hypersetup{
    colorlinks=true, 
    linktoc=all,     
    linkcolor=blue,
        citecolor=red,
    filecolor=black,
    urlcolor=blue	
}
\usepackage{enumerate}
\usepackage[inline]{enumitem}
\makeatletter
\newcommand{\inlineitem}[1][]{%
\ifnum\enit@type=\tw@
    {\descriptionlabel{#1}}
  \hspace{\labelsep}%
\else
  \ifnum\enit@type=\z@
       \refstepcounter{\@listctr}\fi
    \quad\@itemlabel\hspace{\labelsep}%
\fi} \makeatother
\parindent=0pt
\newcommand{\ga}{\alpha}
\newcommand{\gb}{\beta}
\newcommand{\gga}{\gamma}

\newcommand{\gl}{\lambda}

\newcommand{\gr}{\rho}
\newcommand{\gs}{\sigma}


\newcommand{\Gs}{\Sigma}

\newcommand{\Gom}{\Omega}


\newcommand{\subs}{\subset}

\newcommand{\sbnq}{\subsetneq}

\newcommand{\nsbq}{\nsubseteq}

\newcommand{\bs}{\backslash}

\newcommand{\nin}{\notin}

\newcommand{\ti}{\tilde}

\newcommand{\mbb}{\mathbb}

\newcommand{\mcl}{\mathcal}

\newcommand{\ol}{\overline}

\newcommand{\us}{\underset}
\newcommand{\os}{\overset}

\newcommand{\lra}{\longrightarrow}

\newcommand{\I}{\mcl I}

\newcommand{\N}{\mbb N}
\newcommand{\Z}{\mbb Z}
\newcommand{\R}{\mcl R}

\newcommand{\Ra}{\Rightarrow}

\newcommand{\es}{\emptyset}

\newcommand{\eqdef}{\overset{\mathrm{def}}{=\joinrel=}}

\newcommand{\equ}[1]{%
\begin{equation*}
#1
\end{equation*}
}
\newcommand{\equa}[1]{%
\begin{equation*}
\begin{aligned}
#1
\end{aligned}
\end{equation*}
}
\newcommand{\equan}[2]{%
\begin{equation}
\label{Eq:#1}
\begin{aligned}
#2
\end{aligned}
\end{equation}
}
\DeclareMathOperator{\Det}{Det}

\newcommand{\mattwo}[4]{%
\begin{pmatrix}
  #1 & #2\\ #3 & #4
\end{pmatrix}
}

\newcommand{\matthree}[9]{%
\begin{pmatrix}
  #1 & #2 & #3\\ #4 & #5 & #6\\ #7 & #8 & #9
\end{pmatrix}
}


\theoremstyle{plain}
\newtheorem{theorem}{Theorem}[section]

\newtheorem{prop}[theorem]{Proposition}
\newtheorem{lemma}[theorem]{Lemma}

\newtheorem{ques}[theorem]{Question}

\makeatletter
\def\namedlabel#1#2{\begingroup
	\def\@currentlabel{#2}%
	\label{#1}\endgroup
}
\makeatother
\newtheorem*{thmA}{\bf{Theorem A}}

\newtheorem*{thmOmega}{\bf{Theorem} $\bm{\Gom}$}
\newtheorem*{thmSigma}{\bf{Theorem} $\bm{\Gs}$}
\theoremstyle{definition}
\newtheorem{defn}[theorem]{Definition}

\theoremstyle{remark}
\newtheorem{remark}[theorem]{Remark}
\newtheorem{example}[theorem]{Example}
\numberwithin{equation}{section}
\begin{document}
\title[On the Surjectivity of Certain Maps II]{On the Surjectivity of Certain Maps II: For Generalized Projective Spaces}
\author[C.P. Anil Kumar]{C.P. Anil Kumar}
\address{Flat N0. 104, Bldg. No. 23, Lakshmi Paradise, 5th Main, 11th Cross, Lakshmi Narayana Puram, Near Muneeswaraswamy Narasimhaswamy Temple, Bengaluru-566021, Karnataka, INDIA}
\email{akcp1728@gmail.com}
\subjclass[2010]{Primary 13F05,13A15 Secondary 11D79,11B25,16U60,51N30}
\keywords{commutative rings with unity, generalized projective spaces associated to ideals}
\begin{abstract}
In this article we introduce generalized projective spaces (Definitions~[\ref{defn:ProjSpaceRelation},~\ref{defn:GenProjSpace}]) and prove three main theorems in two different
contexts. In the first context we prove, in main Theorem~\ref{theorem:GenCRTSURJ}, the surjectivity of the Chinese remainder reduction map associated to the generalized projective space of an
ideal with a given factorization into mutually co-maximal ideals each of which is contained in only a finitely many maximal ideals,
using the key concept of choice multiplier hypothesis (Definition~\ref{defn:CMH}) which is satisfied.
In the second context of surjectivity of the map from $k\operatorname{-}$dimensional special linear group to the product of generalized projective spaces of $k\operatorname{-}$mutually co-maximal
ideals associating the $k\operatorname{-}$rows or $k\operatorname{-}$columns, we prove remaining two main Theorems~[\ref{theorem:FullGenSurj},~\ref{theorem:FullGenSurjOne}] under certain 
conditions either on the ring or on the generalized projective spaces. Finally in the last section we pose open Questions~[\ref{ques:GenCRTSURJ},~\ref{ques:ProjHighDim}] whose answers
in a greater generality are not known.
\end{abstract}
\maketitle

\section{\bf{Introduction}}
In this article we are concerned with surjectivity of two maps in two different contexts. This is the sequel to C.~P.~Anil~Kumar~\cite{CPAK}. 
The first context is regarding the Chinese remainder reduction map. Ordinary Chinese Remainder Theorem of number theory has been generalized in various contexts. We quote a couple of them here. 
\begin{enumerate}[label=(\Alph*)]
\item We have a formulation for ideals in commutative ring theory. 
If $\R$ is a commutative ring with unity and $\mcl{I}_1,\mcl{I}_2,\ldots,\mcl{I}_k$ are mutually co-maximal ideals then we the map \equ{\frac{\R}{\us{i=1}{\os{k}{\cap}}\I_i} \lra \us{i=1}{\os{k}{\prod}} \frac{\R}{\mcl{I}_i}}
is surjective. 
\item We have a formulation in the form of approximation theorems for algebraic groups and arbitrary varieties. 
For example the reduction mod $m$ map \equ{\gr_m:SL_2(\Z) \lra SL_2(\Z/m\Z)} is surjective for any $m>1$.
Suppose $X\subs \mbb{A}^d_{\Z}$ is defined to be the zero set of a family $f_{\ga},\ga \in I$ of polynomials in $d\operatorname{-}$variables with integers coefficients. Then we can define for any $m\in \mbb{N}$,
$X(\Z/m\Z)=\{(a_1,a_2,\ldots,a_d)\in (\Z/m\Z)^d \mid f_{\ga}(a_1,a_2,\ldots,a_d)\equiv 0\mod\ m \text{ for all }\ga \in I\}$. 
Then a formulation of the approximation question asks whether these maps are surjective for all $m$. More about strong approximation can be found in A.~S.~Rapinchuk~\cite{SA}. 
\end{enumerate}
We consider the generalized projective spaces associated to ideals (refer to Definitions~[\ref{defn:ProjSpaceRelation},~\ref{defn:GenProjSpace}]) of any positive integer dimension $k$ over a commutative ring $\R$
with unity obtained from unital $(k+1)\operatorname{-}$vectors in $\R^{k+1}$. We look at a formulation of Chinese Remainder Theorem for these projective spaces and prove main Theorem~\ref{theorem:GenCRTSURJ}.
This generalizes Theorem $1.6$ on Page $338$ in C.~P.~Anil~Kumar~\cite{CPAK}. In higher generality, Question~\ref{ques:GenCRTSURJ} is still open.

In the second context we look at the following question for generalized projective spaces. Let $k\in \mbb{N}$. Given $(k+1)$ elements in $(k+1)$ generalized projective spaces associated to $(k+1)$ ideals when does 
there exist an $SL_{k+1}(\R)$ matrix such that the rows give rise to these elements. This question is answered in the affirmative for Dedekind type domains $\R$ (Refer to the third main result, Theorem $1.8$ on Page $339$)
in C.~P.~Anil~Kumar~\cite{CPAK} for ordinary projective spaces. Here we generalize further and prove next couple of main Theorems~[\ref{theorem:FullGenSurj},~\ref{theorem:FullGenSurjOne}]. These main theorems lead to another 
open Question~\ref{ques:ProjHighDim} in greater generality. Appropriate and necessary examples are given after the statement of main theorems as motivation.


\section{\bf{The main results}}
In this section we state the main results. Before stating the main results we need to introduce the definition of generalized projective spaces and one more standard definition.


\begin{defn}[Definition of a Projective Space Relation]
\label{defn:ProjSpaceRelation}
~\\
Let $\R$ be a commutative ring with unity. Let $k\in \mbb{N}$ and \equ{\mcl{GCD}_{k+1}(\R)=\{(a_0,a_1,a_2,\ldots,a_k)\in \R^{k+1}\mid \us{i=0}{\os{k}{\sum}}(a_i)=\R\}.} Let $\I \subsetneq \R$ be an ideal
and $m_0,m_1,\ldots,m_k\in \mbb{N}$. Define an equivalence relation \equ{\sim^{k,(m_0,m_1,\ldots,m_k)}_{\I}} on $\mcl{GCD}_{k+1}(\R)$
as follows. For \equ{(a_0,a_1,a_2,\ldots,a_k),(b_0,b_1,b_2,\ldots,b_k) \in \mcl{GCD}_{k+1}(\R)} we say \equ{(a_0,a_1,a_2,\ldots,a_k)\sim^{\{k,(m_0,m_1,\ldots,m_k)\}}_{\I}(b_0,b_1,b_2,\ldots,b_k)} if there exists a \equ{\gl\in \R\text{ with }\ol{\gl}\in \bigg(\frac{\R}{\I}\bigg)^{*}} such that we have 
\equ{a_i \equiv \gl^{m_i}b_i \mod \I,0\leq i\leq k.}  
\end{defn}

Lemma~\ref{lemma:EquivObs} proves that $\sim^{k,(m_0,m_1,\ldots,m_k)}_{\I}$ is an equivalence relation for any ideal $\I \subsetneq \R$.  We mention an example now.

\begin{example}
We take in this example $\R=\Z,\mcl{I}=(p)$ where $p$ is a prime, $k=1,m_0=1,m_1=2$. Consider the equivalence relation $\sim^{1,(1,2)}_{(p)}$. Here the equivalence classes are given as follows.
	\begin{itemize}
		\item One equivalence class: $E_0=\{(a,pb)\mid a,b\in \Z,p\nmid a,gcd(a,pb)=1 \}$ which modulo $p$ has size $(p-1)$.
		\item $(p-1)$ equivalence classes: For $1 \leq b\leq p-1$ we have $E_b=\{(\gl+pa,\gl^2b+pc)\mid a,b,c,\gl \in \Z,p\nmid \gl,gcd(\gl+pa,\gl^2b+pc)=1\}$ each of which modulo $p$ has size $p-1$.
		\item One equivalence class: $R_1=\{(pa,\gl^2+pb)\mid a,b,\gl\in \Z, p\nmid \gl,gcd(pa,\gl^2+pb)=1\}$ which modulo $p$ has size $\frac{p-1}{2}$.
		\item One equivalence class: Let $n\in \Z$ be a fixed quadratic non-residue modulo $p$. Then we have $O_n=\{(pa,\gl^2n+pb)\mid a,b,\gl\in \Z, p\nmid \gl, gcd(pa,\gl^2n+pb)=1\}$ which modulo $p$ has size $\frac{p-1}{2}$.
	\end{itemize}
	The total number of equivalence classes is $p+2$ which is one more than the number of elements in the usual projective space $\mbb{PF}^1_p$.
	A set of representatives for the equivalence classes  is given by
	\begin{itemize}
		\item $(1,b)\in E_b, 0\leq b\leq p-1$.
		\item $(0,1)\in R_1$
		\item $(0,n)\in O_n$ where $n$ is a chosen quadratic non-residue modulo $p$.
	\end{itemize}
\end{example}

\begin{example}
We take in this example $\R=\mbb{R},\mcl{I}=(0),k\in \N,m_i=2,0\leq i\leq k$.
In this case the equivalence relation $\sim^{k,(2,2,\ldots,2)}_{(0)}$ is such that the equivalence classes are rays emanating from origin in $\mbb{R}^{k+1}$.	
\end{example}

\begin{example}
We take in this example $\R=\mbb{F}_p,p$ a prime, $\mcl{I}=(0),k\in \N,m_i=2,0\leq i\leq k$.
In this case the equivalence relation $\sim^{k,(2,2,\ldots,2)}_{(0)}$ is such that the equivalence classes are rays emanating from origin in $\mbb{F}_p^{k+1}$ where a ray is defined as follows. For $v=(a_0,a_1,\ldots,a_k)\in \mbb{F}_p^{k+1}\bs \{0\}$ the ray is given by $R_{v}=\{\gl^2(a_0,a_1,\ldots,a_k)\mid \gl\in \mbb{F}_p^{*}\}$. There are more equivalence classes than just the number of lines passing through origin.	
\end{example}

We define the generalized projective space associated to an ideal in a commutative ring with unity.
\begin{defn}
\label{defn:GenProjSpace}
Let $\R$ be a commutative ring with unity. Let $k\in \mbb{N}$ and \equ{\mcl{GCD}_{k+1}(\R)=\{(a_0,a_1,a_2,\ldots,a_k)\in \R^{k+1}\mid \us{i=0}{\os{k}{\sum}}(a_i)=\R\}.}
Let $m_0,m_1,\ldots,m_k\in \mbb{N}$ and $\I \subsetneq \R$ be an ideal. Let $\sim^{k,(m_0,m_1,\ldots,m_k)}_{\I}$ denote the equivalence relation as in Definition~\ref{defn:ProjSpaceRelation}. Then we define 
\equ{\mbb{PF}^{k,(m_0,m_1,\ldots,m_k)}_{\I}\eqdef \frac{\mcl{GCD}_{k+1}(\R)}{\sim^{k,(m_0,m_1,\ldots,m_k)}_{\I}}.}
If $\mcl{I}=\R$ then let $\sim^{k,(m_0,m_1,\ldots,m_k)}_{\I}$ be the trivial equivalence relation on $\mcl{GCD}_{k+1}(\R)$ where any two elements are related.
We define \equ{\mbb{PF}^{k,(m_0,m_1,\ldots,m_k)}_{\I} \eqdef \frac{\mcl{GCD}_{k+1}(\R)}{\sim^{k,(m_0,m_1,\ldots,m_k)}_{\I}}}
a singleton set having single equivalence class.
\end{defn}

\begin{remark}
Let $\mbb{K}$ be a field. Then by choosing $\R=\mbb{K},\mcl{I}=(0),k\in \N,m_i=1,0\leq i\leq k$ we get the projective space $\mbb{PF}^{k,(1,1,\ldots,1)}_{(0)}$ which is the usual $k$-dimensional projective space $\mbb{PF}^k_{\mbb{K}}$ over the field $\mbb{K}$.
\end{remark}

Lemma~\ref{lemma:GenEquivRel} proves that Definition~\ref{defn:GenProjSpace} is one valid generalization of the usual projective space associated to an ideal $\mcl{I}$ which can be expressed as a finite product of ideals whose radicals are distinct maximal ideals.


We define a Jacobson element, a non-Jacobson element and a Jacobson ideal.
\begin{defn}
Let $\R$ be a commutative ring with unity. An element is said to be a Jacobson element if it is contained in the Jacobson radical $\mcl{J}(\R) = \us{\mcl{M} \subs \R,\text{ maximal ideal}}{\bigcap} \mcl{M}$.
An ideal is a Jacobson ideal if it is contained in the Jacobson radical $\mcl{J}(\R)$. An element is said to be non-Jacobson element if it is not in the Jacobson radical $\mcl{J}(\R)$ or equivalently 
it is a unit modulo some maximal ideal. In particular if an element is a unit modulo a proper ideal of the ring $\R$ then it is a non-Jacobson element.
\end{defn}


\subsection{\bf{The first main theorem}}
The first main theorem is stated as follows where no condition on the ring is required:


\begin{thmA}
\namedlabel{theorem:GenCRTSURJ}{A}
Let $\R$ be a commutative ring with unity and $k,l\in \mbb{N}$. Let $\mcl{I}_i,1\leq i\leq k$ be mutually co-maximal ideals each of which is contained in only a finitely many maximal ideals
and $\mcl{I}=\us{i=1}{\os{k}{\prod}}\mcl{I}_i$. Let $m_j\in \mbb{N},0\leq j\leq l$. Then the Chinese remainder reduction map associated to the projective space 
\equ{\mbb{PF}^{l,(m_0,m_1,\ldots,m_l)}_{\mcl{I}} \lra \mbb{PF}^{l,(m_0,m_1,\ldots,m_l)}_{\mcl{I}_1} \times \mbb{PF}^{l,(m_0,m_1,\ldots,m_l)}_{\mcl{I}_2} \times \ldots \times \mbb{PF}^{l,(m_0,m_1,\ldots,m_l)}_{\mcl{I}_k}}
is surjective (in fact bijective).

In particular let $\mathit{M}(\R)=\{\I \subs \R\mid \I =\mcl{Q}_1\mcl{Q}_2\ldots \mcl{Q}_s$ 
with rad$(\mcl{Q}_i)=\mcl{M}_i, 1\leq i\leq s$  are maximal ideals in $\R$ where $\mcl{M}_i\neq \mcl{M}_j, 1\leq i\neq j\leq s \}$. 
Let \equ{\mcl{I}_i=\mcl{Q}_{i1}\mcl{Q}_{i2}\ldots \mcl{Q}_{ir_i} \in \mathit{M}(\R)\text{ or } \mcl{I}_i=\R, 0\leq i\leq k} be $(k+1)\operatorname{-}$pairwise co-maximal ideals in $\R$
where rad$(\mcl{Q}_{ij})=\mcl{M}_{ij}, 1\leq j\leq r_i, 0\leq i\leq k$ are distinct maximal ideals in $\R$. Let $\mcl{I}=\us{i=1}{\os{k}{\prod}}\mcl{I}_i$.
Then the Chinese remainder reduction map associated to the projective space 
\equ{\mbb{PF}^{l,(m_0,m_1,\ldots,m_l)}_{\mcl{I}} \lra 
\mbb{PF}^{l,(m_0,m_1,\ldots,m_l)}_{\mcl{I}_1} \times \mbb{PF}^{l,(m_0,m_1,\ldots,m_l)}_{\mcl{I}_2} \times \ldots \times \mbb{PF}^{l,(m_0,m_1,\ldots,m_l)}_{\mcl{I}_k}}
is surjective (in fact bijective).
\end{thmA}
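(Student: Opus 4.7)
The plan is to prove bijectivity of the Chinese remainder reduction map by separately verifying well-definedness, injectivity, and surjectivity. I first observe that the second (``in particular'') half of Theorem~\ref{theorem:GenCRTSURJ} is a direct consequence of the first: if $\mcl{I}_i\in \mathit{M}(\R)$ factors as $\mcl{Q}_{i1}\cdots \mcl{Q}_{ir_i}$ with $\mathrm{rad}(\mcl{Q}_{ij})=\mcl{M}_{ij}$, then the set of maximal ideals containing $\mcl{I}_i$ is contained in the finite set $\{\mcl{M}_{i1},\ldots,\mcl{M}_{ir_i}\}$, so the general hypothesis is automatic. Thus the task reduces to proving bijectivity under the general finiteness assumption.

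Well-definedness and injectivity reduce to standard CRT applied to the scalar $\gl$ in the definition of the equivalence relation. Since $\mcl{I}=\prod_{i=1}^{k}\mcl{I}_i\subseteq \mcl{I}_i$ for each $i$ and the $\mcl{I}_i$ are pairwise co-maximal, one has the ring and unit-group isomorphisms $\R/\mcl{I}\cong \prod_i\R/\mcl{I}_i$ and $(\R/\mcl{I})^{*}\cong \prod_i(\R/\mcl{I}_i)^{*}$. For well-definedness, any $\gl\in \R$ with $\ol{\gl}\in (\R/\mcl{I})^{*}$ descends to a unit modulo each $\mcl{I}_i$, and $a_j\equiv \gl^{m_j}b_j\mod \mcl{I}$ descends to $\mod\mcl{I}_i$. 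For injectivity I take witnesses $\gl_i$ of the coordinate-wise equivalences and CRT-lift the tuple $(\gl_i)_i$ to a single $\gl\in \R$ with $\gl\equiv \gl_i\mod \mcl{I}_i$; the unit-group decomposition forces $\ol{\gl}\in (\R/\mcl{I})^{*}$, and the congruences $a_j\equiv \gl^{m_j}b_j\mod \mcl{I}_i$ for every $i$ then merge into $a_j\equiv \gl^{m_j}b_j\mod \mcl{I}$.

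Surjectivity is the substantive step. Starting from representatives $\ul{a}^{(i)}=(a_0^{(i)},\ldots,a_l^{(i)})\in \mcl{GCD}_{l+1}(\R)$ of the given classes, I want to produce a unital tuple $(c_0,\ldots,c_l)\in \mcl{GCD}_{l+1}(\R)$ and scalars $\mu_i\in \R$ with $\ol{\mu_i}\in (\R/\mcl{I}_i)^{*}$ satisfying $c_j\equiv \mu_i^{m_j}a_j^{(i)}\mod \mcl{I}_i$ for all $i,j$. For any fixed choice of the $\mu_i$, ordinary CRT on each coordinate supplies some $c_j\in \R$ meeting the congruences, determined only modulo $\mcl{I}$. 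Because each $(a_j^{(i)})$ is unital in $\R$ and $\mu_i$ is a unit modulo every maximal ideal containing $\mcl{I}_i$, the reduction of $(c_j)$ modulo each $\mcl{I}_i$ is automatically unital, so $\sum_j(c_j)+\mcl{I}_i=\R$ for every $i$ and by co-maximality $\sum_j(c_j)+\mcl{I}=\R$. What remains is to upgrade this to $\sum_j(c_j)=\R$ by a joint choice of the $\mu_i$ and of the representatives $c_j\mod \mcl{I}$; this is exactly what the choice multiplier hypothesis (Definition~\ref{defn:CMH}) is designed to supply. My plan is therefore to first verify the CMH in the present situation, using finiteness of the set of maximal ideals above each $\mcl{I}_i$, and then to invoke it to produce the unital lift.

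The main obstacle is that last lift, from ``unital modulo $\mcl{I}$'' to ``unital in $\R$''. Without any restriction on the $\mcl{I}_i$ this can fail, because infinitely many maximal ideals $\mcl{M}\not\supseteq \mcl{I}$ might simultaneously contain every $c_j$, and neither rescaling nor shifting by elements of $\mcl{I}$ affects the image in any $\R/\mcl{I}_i$. The finiteness hypothesis on each $V(\mcl{I}_i)$ forces the set of maximal ideals containing $\mcl{I}$ to be finite, making $\R/\mcl{I}$ semilocal; this is the precise input that lets one verify the CMH and convert co-maximality into a coherent global choice of unit multipliers $\mu_i$ together with shifts by elements of $\mcl{I}$ that knock out every remaining bad maximal ideal. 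Establishing and deploying the CMH is the true content of the proof; all other steps are standard CRT bookkeeping.
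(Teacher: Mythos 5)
Your reduction of the ``in particular'' statement, your well-definedness and injectivity arguments, and the CRT merging step for surjectivity are all fine, and your architecture (merge the data into a single tuple $(c_j)$ that is unital modulo $\mcl{I}$, then correct it by shifts in $\mcl{I}$ to a genuinely unital tuple) is a legitimate route. The genuine gap is that the pivotal step is asserted, not proved: you claim that finiteness of the set of maximal ideals containing $\mcl{I}$ (semilocality of $\R/\mcl{I}$) ``is the precise input that lets one verify the CMH,'' and you defer to that verification as ``the true content of the proof,'' but you never give it, and it cannot be quoted from the paper. The paper establishes CMH only in two situations: Lemma~\ref{lemma:CMH}, which requires some coordinate of the tuple to be a unit modulo $\mcl{I}$, and Theorem~\ref{theorem:CMH}, which requires $\R$ to be a Dedekind domain. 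Your merged tuple $(c_0,\ldots,c_l)$ need not have any coordinate that is a unit modulo $\mcl{I}$ (different coordinates may be units at different maximal ideals over $\mcl{I}$), so Lemma~\ref{lemma:CMH} does not apply, and no hypothesis on $\R$ is available. This is exactly the difficulty the paper's own proof is built to avoid: it inducts on $k$, forms the $2\times(l+1)$ matrix of the two representatives, applies tracked $SL_{l+1}(\R)$ column operations together with Proposition~\ref{prop:Unital} to force the $0$-th coordinate to be a unit modulo $\mcl{I}_1$ and modulo $\mcl{I}_2\cdots\mcl{I}_k$ (hence modulo $\mcl{I}$ after CRT), and only then invokes Lemmas~\ref{lemma:CMH} and~\ref{lemma:CMHimpliesUnitalVect}, undoing the column operations at the end. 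Note also that what your plan actually needs is the shift statement of Lemma~\ref{lemma:CMHimpliesUnitalVect} (lift a row unital modulo $\mcl{I}$ to a unital row congruent to it modulo $\mcl{I}$), and verifying literal CMH carries the extra requirement that the multipliers $a_i$ themselves form a unital set, which also needs an argument.

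For what it is worth, the statement you are leaning on is in fact true, so your plan could be completed, but only by adding an argument absent from both your proposal and the paper: since $\R/\mcl{I}$ has only finitely many maximal ideals, it has stable range one (by a prime-avoidance/CRT argument in the spirit of Proposition~\ref{prop:FundLemmaRings}), so the reduced row can be carried to $(1,0,\ldots,0)$ by elementary column operations over $\R/\mcl{I}$; lifting the elementary matrix to $\R$ produces a unital row over $\R$ congruent to $(c_j)$ modulo $\mcl{I}$, and reading off the relevant column of the inverse matrix even yields CMH in the literal sense. Without some such argument, the central claim of your proof stands unproven, whereas the paper's column-operation strategy needs only the trivial case Lemma~\ref{lemma:CMH}.
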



\subsection{\bf{The second main theorem}}
The second main theorem is stated as follows where a condition on the ring is required:


\begin{thmOmega}
\namedlabel{theorem:FullGenSurj}{$\Gom$}
Let $\R$ be a commutative ring with unity. Suppose every non-Jacobson element in $\R$ is contained in only a finitely many maximal ideals.
Let $k\in \mbb{N}$ and $\mcl{I}_i,0\leq i\leq k$ be mutually co-maximal ideals in $\R$.
Also if there is exactly one proper ideal $\mcl{I}_j$ for some $0\leq j\leq k$ then we suppose it is contained in only a finitely many maximal ideals.
Let $m_j^i\in \mbb{N}, 0\leq i,j\leq k$. 
Then the map 
\equ{SL_{k+1}(\R) \lra \us{i=0}{\os{k}{\prod}}\mbb{PF}^{k,(m^i_0,m^i_1,\ldots,m^i_k)}_{\I_i}} given by
\equa{&A_{(k+1)\times (k+1)}=[a_{i,j}]_{0\leq i,j\leq k} \lra\\
&\big([a_{0,0}:a_{0,1}:\ldots: a_{0,k}],[a_{1,0}:a_{1,1}:\ldots: a_{1,k}],\ldots,[a_{k,0}:a_{k,1}:\ldots: a_{k,k}]\big)}
is surjective. 

In particular let $\mathit{M}(\R)=\{\I \subs \R\mid \I =\mcl{Q}_1\mcl{Q}_2\ldots \mcl{Q}_l$ 
with rad$(\mcl{Q}_i)=\mcl{M}_i, 1\leq i\leq l$  are maximal ideals in $\R$ where $\mcl{M}_i\neq \mcl{M}_j, 1\leq i\neq j\leq l\}$. 
Let \equ{\mcl{I}_i=\mcl{Q}_{i1}\mcl{Q}_{i2}\ldots \mcl{Q}_{ir_i} \in \mathit{M}(\R)\text{ or } \mcl{I}_i=\R, 0\leq i\leq k} be $(k+1)\operatorname{-}$pairwise co-maximal ideals in $\R$
where rad$(\mcl{Q}_{ij})=\mcl{M}_{ij}, 1\leq j\leq r_i, 0\leq i\leq k$ are distinct maximal ideals in $\R$.
Then the map 
\equ{SL_{k+1}(\R) \lra \us{i=0}{\os{k}{\prod}}\mbb{PF}^{k,(m^i_0,m^i_1,\ldots,m^i_k)}_{\I_i}} given by
\equa{&A_{(k+1)\times (k+1)}=[a_{i,j}]_{0\leq i,j\leq k} \lra\\
&\big([a_{0,0}:a_{0,1}:\ldots: a_{0,k}],[a_{1,0}:a_{1,1}:\ldots: a_{1,k}],\ldots,[a_{k,0}:a_{k,1}:\ldots: a_{k,k}]\big)}
is surjective. 
\end{thmOmega}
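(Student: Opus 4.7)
Given classes $v_i \in \mbb{PF}^{k,(m^i_0,\ldots,m^i_k)}_{\I_i}$ with fixed representatives $(a^i_0,\ldots,a^i_k) \in \mcl{GCD}_{k+1}(\R)$ for $0 \le i \le k$, the goal is to construct $A = [\alpha_{i,j}] \in SL_{k+1}(\R)$ whose $i$-th row represents $v_i$. The plan is to proceed in three stages: assemble the data modulo $\I = \prod_i \I_i$ into a matrix of $SL_{k+1}(\R/\I)$ via the classical Chinese remainder theorem, lift to $M_{k+1}(\R)$, and then correct the determinant by modifying rows within their prescribed residue classes, using exactly the freedom that the equivalence relations $\sim^{k,(m^i_0,\ldots,m^i_k)}_{\I_i}$ afford.

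For the first stage, CRT gives $\R/\I \cong \prod_t \R/\I_t$. Each proper $\I_t$ turns out to be contained in only finitely many maximal ideals: this is clear in the ``in particular'' case directly from the definition of $\mathit{M}(\R)$; in the general case, either only one $\I_t$ is proper and the hypothesis is explicit, or there is a second proper $\I_{t'}$, and then from a decomposition $1 = x + y$ with $x \in \I_t, y \in \I_{t'}$ the element $x$ is non-Jacobson (it is a unit modulo every maximal ideal above $\I_{t'}$), hence lies in finitely many maximal ideals, which bounds those containing $\I_t$. Therefore each $\R/\I_t$ with $\I_t$ proper is semilocal, so the unimodular row $(a^t_0,\ldots,a^t_k) \bmod \I_t$ completes to some $\bar A_t \in SL_{k+1}(\R/\I_t)$ with this row in position $t$; taking the identity when $\I_t = \R$ and patching via CRT yields $\bar A \in SL_{k+1}(\R/\I)$ whose $i$-th row is $(a^i_0,\ldots,a^i_k) \bmod \I_i$ for every $i$.

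Any lift $A' \in M_{k+1}(\R)$ of $\bar A$ then satisfies $\det A' = 1 + \epsilon$ for some $\epsilon \in \I$, and the $i$-th row of $A'$ is already a legitimate representative of $v_i$. Replacing the last row by $(\alpha_{k,j} + z_j)_j$ with $z_j \in \I_k$ keeps the $k$-th row in its class and alters the determinant by $\sum_j z_j C_{k,j}$, where the $C_{k,j}$ are the cofactors of $A'$; the invertibility of $\mathrm{adj}(\bar A)$ over $\R/\I$ forces the cofactor ideal $J := (C_{k,0},\ldots,C_{k,k})$ to satisfy $J + \I = \R$. The main obstacle will be to solve $\sum_j z_j C_{k,j} = -\epsilon$ with $z_j \in \I_k$, equivalently to arrange $\epsilon \in \I_k J$. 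The plan is first to perform a preliminary round of row modifications, adding $\I_i^{k+1}$-valued vectors to row $i$ for $i < k$ chosen by a unimodular-row-completion argument over $\R$, so as to enlarge $J$ until $J = \R$; once this is done, the correcting equation is trivially solvable. This enlargement step is the crux of the argument and is where the finiteness hypothesis on maximal ideals above non-Jacobson elements (or on the distinguished single proper ideal) is indispensable, because it supplies the stable-range control needed to promote unimodularity modulo $\I$ to unimodularity in $\R$.
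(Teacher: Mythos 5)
Your stages 1--2 are essentially sound: the observation that each proper $\mcl{I}_t$ is contained in only finitely many maximal ideals (via a non-Jacobson element produced by co-maximality, or by the explicit hypothesis when only one ideal is proper) is exactly the paper's reduction, and completing each reduced row over the semilocal ring $\R/\mcl{I}_t$, patching by CRT and lifting to get $\det A'=1+\epsilon$ with $\epsilon\in\mcl{I}$ is legitimate. The problem is that your proof stops precisely where the theorem's difficulty begins. The ``preliminary round of row modifications \ldots so as to enlarge $J$ until $J=\R$'' is only asserted, not proved: the hypothesis on non-Jacobson elements gives no stable-range property of $\R$ itself ($\R$ is not semilocal and may have infinitely many maximal ideals), and Example~\ref{Example:NotSurj} shows that without the finiteness hypotheses exactly this kind of ``promote unimodularity mod $\mcl{I}$ to unimodularity in $\R$'' statement fails, so a genuine argument is required. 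In the paper this work is the entire content of Theorem~\ref{theorem:GenSurjMainGenIdeals}: column reduction to a near-diagonal matrix (Propositions~\ref{prop:Unital},~\ref{prop:bringunit}), the adjustment of the diagonal to have product $\equiv 1 \mod \mcl{I}_0\cdots\mcl{I}_k$ (Proposition~\ref{prop:diagdetone}, resting on Proposition~\ref{prop:ChoiceMaxElements}, Lemma~\ref{lemma:IdealAvoid} and Proposition~\ref{prop:FundLemmaRings}), and the lift through Theorem~\ref{theorem:SurjModIdeal}, which applies because $\mcl{I}$ lies in only finitely many maximal ideals and hence satisfies $USC$. Nothing of comparable substance appears in your plan, so the crux is missing.

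There is also a concrete flaw in the way the final correction is set up. If in the preliminary round you add $s_i\in\mcl{I}_i^{\,k+1}$ to row $i$ for $i<k$, multilinearity shows the determinant changes by an element of $\mcl{I}_0+\cdots+\mcl{I}_{k-1}$, so the new defect $\epsilon'=\det-1$ need no longer lie in $\mcl{I}_k$ (the ideals are co-maximal, so $\mcl{I}_0+\cdots+\mcl{I}_{k-1}$ can be all of $\R$). Since $\{\sum_j z_jC_{k,j}\mid z_j\in\mcl{I}_k\}=\mcl{I}_kJ=\mcl{I}_k$ once $J=\R$, the equation $\sum_j z_jC_{k,j}=-\epsilon'$ is solvable only if $\epsilon'\in\mcl{I}_k$; so it is not ``trivially solvable'' unless the preliminary perturbations are additionally constrained to preserve $\det\equiv 1\mod\mcl{I}_k$ (say $s_i$ with entries in $\mcl{I}_i\mcl{I}_k$), which makes the unproved enlargement claim even more delicate. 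A minor further point: the rows of an arbitrary lift $A'$ need not be unimodular in $\R$, so they are not literally representatives of the classes $v_i$; this is harmless for the final $SL_{k+1}(\R)$ matrix, whose rows are automatically unimodular and congruent entrywise (with $\gl=1$) to the chosen representatives, but it should be stated carefully.
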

The hypothesis about exactly one proper ideal in Theorem~\ref{theorem:FullGenSurj} is to make sure that each of the ideals is contained in only a finitely many maximal ideals by mutual co-maximality.
When the ring has infinitely many maximal ideals, this hypothesis avoids the case where we have one Jacobson ideal and the rest are all unit ideals.
We will mention about this hypothesis later in this article as it appears again at a few places.


\subsection{\bf{The third main theorem}}

The third main theorem is stated as follows where no condition on the ring is required but a condition on the generalized projective space is assumed:


\begin{thmSigma}
\namedlabel{theorem:FullGenSurjOne}{$\Gs$}
Let $\R$ be a commutative ring with unity. Let $k\in \mbb{N}$ and $\mcl{I}_i,0\leq i\leq k$ be mutually co-maximal ideals in $\R$ each of which is contained in only a finitely many maximal ideals. 
Let $m_j^i\in \mbb{N}, 0\leq i,j\leq k$ such that $m^i_{\gs(i)}=1$ for each $0\leq i\leq k$ for some $\gs$, a permutation of $\{0,1,2,\ldots,k\}$.  
Then the map 
\equ{SL_{k+1}(\R) \lra \us{i=0}{\os{k}{\prod}}\mbb{PF}^{k,(m^i_0,m^i_1,\ldots,m^i_k)}_{\I_i}} given by
\equa{&A_{(k+1)\times (k+1)}=[a_{i,j}]_{0\leq i,j\leq k} \lra\\
&\big([a_{0,0}:a_{0,1}:\ldots: a_{0,k}],[a_{1,0}:a_{1,1}:\ldots: a_{1,k}],\ldots,[a_{k,0}:a_{k,1}:\ldots: a_{k,k}]\big)}
is surjective. Also we have the following particular instances.
\begin{enumerate}
\item 
Especially if $m^i_j=1$ for all $0\leq i,j\leq k$, that is, for usual projective spaces, also, we have surjectivity.
\item 
Also in particular let $\mathit{M}(\R)=\{\I \subs \R\mid \I =\mcl{Q}_1\mcl{Q}_2\ldots \mcl{Q}_l$ 
with rad$(\mcl{Q}_i)=\mcl{M}_i, 1\leq i\leq l$  are maximal ideals in $\R$ where $\mcl{M}_i\neq \mcl{M}_j, 1\leq i\neq j\leq l\}$. 
Let \equ{\mcl{I}_i=\mcl{Q}_{i1}\mcl{Q}_{i2}\ldots \mcl{Q}_{ir_i} \in \mathit{M}(\R)\text{ or } \mcl{I}_i=\R, 0\leq i\leq k} be $(k+1)\operatorname{-}$pairwise co-maximal ideals in $\R$
where rad$(\mcl{Q}_{ij})=\mcl{M}_{ij}, 1\leq j\leq r_i, 0\leq i\leq k$ are distinct maximal ideals in $\R$.
Let $m_j^i\in \mbb{N}, 0\leq i,j\leq k$ such that $m^i_{\gs(i)}=1$ for each $0\leq i\leq k$ for some $\gs$, a permutation of $\{0,1,2,\ldots,k\}$.  
Then the map 
\equ{SL_{k+1}(\R) \lra \us{i=0}{\os{k}{\prod}}\mbb{PF}^{k,(m^i_0,m^i_1,\ldots,m^i_k)}_{\I_i}} given by
\equa{&A_{(k+1)\times (k+1)}=[a_{i,j}]_{0\leq i,j\leq k} \lra\\
&\big([a_{0,0}:a_{0,1}:\ldots: a_{0,k}],[a_{1,0}:a_{1,1}:\ldots: a_{1,k}],\ldots,[a_{k,0}:a_{k,1}:\ldots: a_{k,k}]\big)}
is surjective. 
\end{enumerate}
\end{thmSigma}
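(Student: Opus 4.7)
The plan is a reduction-and-lift argument exploiting the semi-local structure of the quotients $\R/\mcl{I}_i$. For each $i$, I fix a unital representative $(b_{i,0}, \ldots, b_{i,k}) \in \mcl{GCD}_{k+1}(\R)$ of the target element $v_i \in \mbb{PF}^{k,(m^i_0,\ldots,m^i_k)}_{\mcl{I}_i}$; such a representative exists by the very definition of the generalized projective space.

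Since each $\mcl{I}_i$ is contained in only finitely many maximal ideals, the quotient $\R/\mcl{I}_i$ is a commutative semi-local ring, hence has stable range one. It follows from the standard structure theory (in particular $SL_n = E_n$ over any commutative semi-local ring for $n \geq 2$) that the reduction $(\ol{b}_{i,0}, \ldots, \ol{b}_{i,k}) \in (\R/\mcl{I}_i)^{k+1}$, which is still unital, can be completed to a matrix $B_i \in SL_{k+1}(\R/\mcl{I}_i)$ whose $i$-th row is precisely this reduction (the remaining rows are chosen freely). Set $\mcl{I} = \us{i=0}{\os{k}{\prod}}\mcl{I}_i$. The pairwise co-maximality of the $\mcl{I}_i$ and the ordinary entrywise Chinese Remainder Theorem assemble the family $(B_i)$ into a single matrix $B \in M_{k+1}(\R/\mcl{I})$ satisfying $B \equiv B_i \pmod{\mcl{I}_i}$ for every $i$; since each $\det(B_i) = 1$, one obtains $B \in SL_{k+1}(\R/\mcl{I})$.

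The remaining step is to lift $B$ to some $A \in SL_{k+1}(\R)$. Because $\mcl{I}$ itself lies in only the finite union of maximal ideals containing some $\mcl{I}_i$, the quotient $\R/\mcl{I}$ is again semi-local, and so $SL_{k+1}(\R/\mcl{I}) = E_{k+1}(\R/\mcl{I})$. Writing $B$ as a product of elementary matrices over $\R/\mcl{I}$ and lifting each factor entrywise to $\R$ produces the required $A \in SL_{k+1}(\R)$ with $A \equiv B \pmod{\mcl{I}}$. By construction, the $i$-th row of $A$ reduces modulo $\mcl{I}_i$ to $(\ol{b}_{i,0}, \ldots, \ol{b}_{i,k})$, which is a representative of $v_i$ in $\mbb{PF}^{k,(m^i_0,\ldots,m^i_k)}_{\mcl{I}_i}$ with scaling parameter $\gl_i = 1$ (since $1^{m^i_j}=1$ for all $j$). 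This proves the main surjectivity; special case (1) is the instance of ordinary projective spaces (for which the permutation hypothesis holds trivially with $\gs = \mrm{id}$), and case (2) is merely the same general statement rephrased in the notation of $M(\R)$.

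The technical heart of the argument is the final lift $SL_{k+1}(\R/\mcl{I}) \to SL_{k+1}(\R)$: the hypothesis that each $\mcl{I}_i$ lies in only finitely many maximal ideals is used precisely here, to make $\R/\mcl{I}$ semi-local so that its special linear group is generated by elementary matrices (which then lift trivially). It is worth noting that the permutation condition $m^i_{\gs(i)} = 1$ does not itself enter this line of reasoning; it is what one would exploit in a more constructive row-by-row construction, in which the distinguished position $\gs(i)$ would supply scaling freedom by $\gl$ rather than by $\gl^{m^i_j}$.
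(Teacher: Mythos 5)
Your proof is correct, but it takes a genuinely different route from the paper's. The paper column-reduces the matrix of representatives by right multiplication with $SL_{k+1}(\R)$ matrices (Steps (A)--(C) in the proof of Theorem~\ref{theorem:GenSurjMainGenIdeals}) until, modulo each $\mcl{I}_i$, the $i$-th row is a unit multiple of the standard basis vector $e_i$; the hypothesis $m^i_{\gs(i)}=1$ is then used to replace each row by $e_i$ itself within its equivalence class, reducing the whole matrix to the identity (with a sign-flip automorphism to absorb odd $\gs$). You instead complete each chosen representative row to a matrix in $SL_{k+1}(\R/\mcl{I}_i)$ --- legitimate because $\R/\mcl{I}_i$ is semi-local, hence has stable range one, so unimodular rows of length $\geq 2$ are completable (strictly speaking this is the stable-range / mod-Jacobson-radical argument rather than a formal consequence of $SL_n=E_n$, but both are standard for commutative semi-local rings) --- then glue by the entrywise Chinese Remainder Theorem and lift along the surjection $SL_{k+1}(\R)\lra SL_{k+1}(\R/\prod_i\mcl{I}_i)$, which is exactly Theorem~\ref{theorem:SurjModIdeal} combined with Proposition~\ref{prop:Unital}. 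Your approach buys two things: it avoids the backtracking through column operations, which is delicate precisely because the right $SL_{k+1}(\R)$-action does not descend to the generalized projective spaces when the $m^i_j$ are not all equal to $1$; and, as you yourself note, it never invokes $m^i_{\gs(i)}=1$, since you obtain $a_{i,j}\equiv b_{i,j}\bmod \mcl{I}_i$ entrywise and may take $\gl_i=1$ regardless of the exponents. You have therefore proved the strictly stronger statement that surjectivity holds for arbitrary $m^i_j\in\mbb{N}$ whenever the pairwise co-maximal ideals are each contained in only finitely many maximal ideals --- covering in particular situations like Example~\ref{Example:OQ}, where none of the paper's theorems apply. I checked this strengthening and see no gap; the trade-off is that your argument leans on standard facts about semi-local rings where the paper's is self-contained in its own preliminary propositions.
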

We give appropriate motivating examples which are very interesting and relevant for this aricle.  First we mention an example as an application of Theorem~\ref{theorem:FullGenSurj} but Theorem~\ref{theorem:FullGenSurjOne} is not applicable here.
\begin{example}
	Let $\R=\Z$. Let $k=4$ and $\mcl{I}_0=241\Z,\mcl{I}_1=601\Z,\mcl{I}_2=1201\Z,\mcl{I}_3=1321\Z,\mcl{I}_4=1801\Z$ be the five distinct primes ideals in $\Z$.
	Let \equ{M=[m^i_j]_{0\leq i,j\leq 4}=\begin{pmatrix}
			2 & 5 & 3 & 10 & 6\\
			8 & 20 & 30 & 24 & 12\\
			1 & 50 & 48 & 40 & 60\\
			11 & 55 & 44 & 22 & 15\\
			18 & 4 & 72 & 90 & 27\\
		\end{pmatrix}.}
	
	Consider the row unital matrix say for example 
	\equ{B=[b_{ij}]_{0\leq i,j\leq 4}=\begin{pmatrix}
			1 & 1 & 1 & 1 & 1\\
			1 & 1 & 1 & 1 & 1\\
			1 & 1 & 1 & 1 & 1\\
			1 & 1 & 1 & 1 & 1\\
			1 & 1 & 1 & 1 & 1\\
	\end{pmatrix}}
	which gives rise to the element  
	\equa{&([1:1:1:1:1],[1:1:1:1:1],[1:1:1:1:1],[1:1:1:1:1],[1:1:1:1:1])\in\\ &\mbb{PF}_{(241)}^{4,(2,5,3,10,6)}\times \mbb{PF}_{(601)}^{4,(8,20,30,24,12)}\times \mbb{PF}_{(1201)}^{4,(1,50,48,40,60)}\times \mbb{PF}_{(1321)}^{4,(11,55,44,22,15)}\times \mbb{PF}_{(1801)}^{4,(18,4,72,90,27)}.}
	
	So does there exist a matrix $A=[a_{ij}]_{0\leq i,j\leq 4}\in SL_5(\Z)$ such that $a_{ij}\equiv \gl_i^{m^i_j}b_{ij} \mod \mcl{I}_i, 0\leq i,j\leq 4,\gl_i\in \Z$ such that $\ol{\gl_i}\in \frac{\Z}{\mcl{I}_i}$ is a unit? The answer is yes and there does exist such a matrix $A\in SL_5(\Z)$ using Theorem~\ref{theorem:FullGenSurj}.
	In this example every non-jacobson element in $\Z$ is contained in only a finitely many maximal ideals. Theorem~\ref{theorem:FullGenSurjOne} is not applicable because of the values in  $M=[m^i_j]_{0\leq i,j\leq 4}$.	
\end{example}

We mention an example where Theorem~\ref{theorem:FullGenSurj} is not applicable but Theorem~\ref{theorem:FullGenSurjOne} is applicable.

\begin{example}
	Let $\R=\mbb{R}[x,y]$. Let $k=2,r,s,t\in \N$. Let $\mcl{I}_0=(x,y)^t, \mcl{I}_1=(x-1,y)^r,\mcl{I}_2=(x,y-1)^s$.
	$M=[m^i_j]_{0\leq i,j\leq 2}=\matthree{1}{2}{2}{2}{1}{2}{2}{2}{1}$. So here by Theorem~\ref{theorem:FullGenSurjOne} the map $SL_3(\R)\lra \mbb{PF}^{2,(1,2,2)}_{\mcl{I}_0}\times \mbb{PF}^{2,(2,1,2)}_{\mcl{I}_1} \times \mbb{PF}^{2,(2,2,1)}_{\mcl{I}_2}$ is surjective. It is not true that every non-jacobson element of $\R$ is contained in only a finitely many maximal ideals. So Theorem~\ref{theorem:FullGenSurj} is not applicable.
\end{example}

Now we mention an example where Theorems~\ref{theorem:FullGenSurjOne},~\ref{theorem:FullGenSurj} need not hold if proper conditions are not given.
\begin{example}
	\label{Example:NotSurj}
	Consider the ring $\R=\frac{\mbb{R}[X_1,X_2,X_3]}{(X_1^2+X_2^2+X_3^2-1)}=\mbb{R}[x_1=\ol{X_1},x_2=\ol{X_2},x_3=\ol{X_3}]$. This ring does not satisfy unimodular extension property. Note the vector $(x_1,x_2,x_3)$ is unimodular. However there does not exist a matrix in $GL_3(\R)$ such that the first row is $(x_1,x_2,x_3)$. This is because, there is no non-vanishing tangent vector field over the two dimensional sphere $S^2$. Let $\mcl{I}_0=(0),\mcl{I}_1=\R,\mcl{I}_2=\R, m^i_j=1, 0\leq i,j\leq 2,k=2$. Here the zero ideal $\mcl{I}_0$ is not contained in finitely many maximal ideals. The map $SL_3(\R) \lra \mbb{PF}_{\mcl{I}_0}^{2,(1,1,1)} \times \mbb{PF}_{\mcl{I}_1}^{2,(1,1,1)} \times \mbb{PF}_{\mcl{I}_2}^{2,(1,1,1)}$ is not surjective as the element $([x_1:x_2:x_3],*,*)\in \mbb{PF}_{\mcl{I}_0}^{2,(1,1,1)} \times \mbb{PF}_{\mcl{I}_1}^{2,(1,1,1)} \times \mbb{PF}_{\mcl{I}_2}^{2,(1,1,1)}$ is not in the image. 	
\end{example}

We give an example where both Theorems~\ref{theorem:FullGenSurj},~\ref{theorem:FullGenSurjOne} are not applicable however Theorem~\ref{theorem:FullGenSurjTwoDim} is applicable. 

\begin{example}
Let $\R=\mbb{R}[x,y]$. Let $k=1,r,s\in \N$. Let $\mcl{I}_1=(x-1,y)^r,\mcl{I}_2=(x,y-1)^s$.
$m^i_j=2,i=1,2,j=0,1$. Theorem~\ref{theorem:FullGenSurj} is not applicable here. Also, because of the values of $m^i_j,i=1,2,j=0,1$, Theorem~\ref{theorem:FullGenSurjOne} is not applicable. However Theorem~\ref{theorem:FullGenSurjTwoDim} is applicable as the two ideals are co-maximal and each one of them is contained in only one maximal ideal, that is, $\mcl{I}_1 \subseteq (x-1,y),\mcl{I}_2\subseteq (x,y-1)$. So the map 
\equ{SL_2(\R) \lra \mbb{PF}^{1,(m^1_0,m^1_1)}_{\I_1} \times \mbb{PF}^{1,(m^2_0,m^2_1)}_{\I_2}} given by
\equ{\mattwo abcd \lra ([a:b],[c:d])}
is surjective.
\end{example}

We mention an example which leads to open Question~\ref{ques:ProjHighDim} in Section~\ref{sec:OQ}. 
\begin{example}
	\label{Example:OQ}
Let $\mcl{R}=\mbb{R}[x,y]$. Let $k=2,r,s,t\in \N$. Let $\mcl{I}_0=(x,y)^t, \mcl{I}_1=(x-1,y)^r,\mcl{I}_2=(x,y-1)^s$.
$M=[m^i_j]_{0\leq i,j\leq 2}=\matthree{2}{2}{2}{2}{2}{2}{2}{2}{2}$. Here Theorems~\ref{theorem:FullGenSurj},~\ref{theorem:FullGenSurjOne},~\ref{theorem:FullGenSurjTwoDim} are not applicable. 
\end{example}


\section{\bf{Generalized Projective Spaces Over Commutative Rings with Unity}}
In this section we prove in Lemma~\ref{lemma:GenEquivRel} that the generalized projective space is a valid generalization of the usual projective space associated to any ideal in a commutative ring with unity.
We observe that the relation in Definition~\ref{defn:ProjSpaceRelation} is an equivalence relation.


\begin{lemma}
\label{lemma:EquivObs}
In Definition~\ref{defn:ProjSpaceRelation} the relation \equ{\sim^{k,(m_0,m_1,\ldots,m_k)}_{\I}} is an equivalence relation on the set $\mcl{GCD}_{k+1}(\R)$. 
\end{lemma}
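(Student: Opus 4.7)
The plan is a direct verification of the three axioms (reflexivity, symmetry, transitivity), exploiting the hypothesis that the scaling element $\gl$ represents a unit modulo $\I$.

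First, for reflexivity of $(a_0,\ldots,a_k) \in \mcl{GCD}_{k+1}(\R)$, I would simply take $\gl = 1 \in \R$, so that $\ol{\gl} = \ol{1} \in (\R/\I)^{*}$ and $a_i \equiv 1^{m_i}\cdot a_i \mod \I$ for every $0\leq i\leq k$. Next, for symmetry, suppose $(a_0,\ldots,a_k)\sim^{k,(m_0,\ldots,m_k)}_{\I}(b_0,\ldots,b_k)$ via $\gl$. Since $\ol{\gl}\in (\R/\I)^{*}$, pick a lift $\gm\in \R$ of $\ol{\gl}^{-1}$; then $\gm\gl \equiv 1 \mod \I$, so $\gm^{m_i}\gl^{m_i}\equiv 1\mod \I$, and multiplying $a_i\equiv \gl^{m_i}b_i \mod \I$ by $\gm^{m_i}$ yields $b_i\equiv \gm^{m_i}a_i \mod \I$. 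Moreover $\ol{\gm}=\ol{\gl}^{-1}\in (\R/\I)^{*}$, which gives the reverse relation.

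For transitivity, assume $(a_0,\ldots,a_k)\sim (b_0,\ldots,b_k)$ via $\gl$ and $(b_0,\ldots,b_k)\sim (c_0,\ldots,c_k)$ via $\gm$. Combining the congruences gives
\[
a_i \equiv \gl^{m_i}b_i \equiv \gl^{m_i}\gm^{m_i}c_i = (\gl\gm)^{m_i}c_i \mod \I,
\]
and since $\ol{\gl\gm}=\ol{\gl}\,\ol{\gm}$ is a product of units in $\R/\I$, it is again a unit, so $(a_0,\ldots,a_k)\sim (c_0,\ldots,c_k)$ via the scalar $\gl\gm$.

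There is no real obstacle here: every step is forced once one notes that $(\R/\I)^{*}$ is a group under multiplication and that the assignment $\gl\mapsto \gl^{m_i}$ respects this group structure componentwise. The only thing to be slightly careful about is lifting $\ol{\gl}^{-1}$ back to an element of $\R$ in the symmetry step, but any preimage under the quotient map $\R \to \R/\I$ works, since the equivalence relation only sees $\gl$ modulo $\I$ in the relevant congruences.
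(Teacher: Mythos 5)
Your verification is correct and is exactly the routine check the paper has in mind when it declares the proof ``immediate'': reflexivity via $\gl=1$, symmetry via a lift of $\ol{\gl}^{-1}$, and transitivity via the product $\gl\gm$, using that $(\R/\I)^{*}$ is a group. Nothing further is needed.
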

\begin{proof}
The proof is immediate.
\end{proof}


We observe that the equivalence relation $\sim^{k,(m_0,m_1,\ldots,m_k)}_{\I}$ generalizes the usual equivalence relation in the definition of usual projective space. We prove 
in Lemma~\ref{lemma:GenEquivRel} that, when $m_i=1,0\leq i\leq k$, the equivalence relation $\sim^{k,(1,1,\ldots,1)}_{\I}$ relation is the same as the usual equivalence relation of the projective space 
when the ideal $\mcl{I}$ can be expressed as a finite product of ideals whose radical are all distinct maximal ideals.
\begin{lemma}
\label{lemma:GenEquivRel}
Let $\R$ be a commutative ring with unity. Let $l\in \mbb{N}$ and $\I=\mcl{Q}_1\mcl{Q}_2\ldots\mcl{Q}_l$ where $\mcl{Q}_i\subs \R, 1\leq i\leq l$ is an ideal such that rad$(\mcl{Q}_i)=\mcl{M}_i$
is a maximal ideal and $\mcl{M}_i\neq \mcl{M}_j$ for $1\leq i\neq j\leq l$. 
Let \equ{\mcl{GCD}_{k+1}(\R)=\{(a_0,a_1,a_2,\ldots,a_k)\in \R^{k+1}\mid \us{i=0}{\os{k}{\sum}}(a_i)=\R\}.}  
Let \equ{(a_0,a_1,a_2,\ldots,a_k),(b_0,b_1,b_2,\ldots,b_k) \in \mcl{GCD}_{k+1}(\R).} 
Consider the following equivalence relation on $\mcl{GCD}_{k+1}(\R)$. We say \equ{(a_0,a_1,a_2,\ldots,a_k)\sim^R_{\I}(b_0,b_1,b_2,\ldots,b_k)} if for every $0\leq i,j \leq k$
we have $a_ib_j-b_ia_j\in \mcl{I}$. Then the two equivalence relations 
\equ{\sim^R_{\I},\sim^{\{k,(1,1,\ldots,1)\}}_{\I}} are identical.
\end{lemma}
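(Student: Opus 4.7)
The plan is to prove the two inclusions separately, with the Chinese Remainder Theorem handling the harder one. The direction $\sim^{\{k,(1,1,\ldots,1)\}}_{\I} \Rightarrow \sim^R_{\I}$ is a direct calculation: if $\gl \in \R$ with $\ol{\gl} \in (\R/\I)^*$ satisfies $a_i \equiv \gl b_i \mod \I$ for every $i$, writing $a_i = \gl b_i + q_i$ with $q_i \in \I$ gives $a_ib_j - b_ia_j = q_i b_j - b_i q_j \in \I$. This step needs no hypothesis on $\I$.

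For the reverse direction, first I would observe that the $\mcl{Q}_s$ are pairwise co-maximal: since their radicals $\mcl{M}_s$ are distinct maximal ideals, $\sqrt{\mcl{Q}_s + \mcl{Q}_t} \supseteq \mcl{M}_s + \mcl{M}_t = \R$ forces $\mcl{Q}_s + \mcl{Q}_t = \R$ for $s \neq t$, and consequently $\I = \us{s=1}{\os{l}{\prod}}\mcl{Q}_s = \us{s=1}{\os{l}{\bigcap}}\mcl{Q}_s$. By the classical CRT, $\R/\I \cong \us{s=1}{\os{l}{\prod}}\R/\mcl{Q}_s$. It therefore suffices to produce, for each $s$, an element $\gl_s \in \R$ whose image lies in $(\R/\mcl{Q}_s)^*$ and satisfies $a_j \equiv \gl_s b_j \mod \mcl{Q}_s$ for all $j$; gluing these into a single $\gl \in \R$ via CRT then gives $a_j \equiv \gl b_j \mod \I$ with $\ol{\gl}$ automatically a unit in $\R/\I$.

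To construct $\gl_s$, I would exploit that $\sqrt{\mcl{Q}_s} = \mcl{M}_s$ being maximal makes $\mcl{M}_s$ the only prime containing $\mcl{Q}_s$, so $\R/\mcl{Q}_s$ is a local ring whose units are exactly the elements outside $\mcl{M}_s/\mcl{Q}_s$. Unimodularity of $(a_0, \ldots, a_k)$ passes to the residue field $\R/\mcl{M}_s$, so some $a_{i_s} \notin \mcl{M}_s$; by locality, $a_{i_s}$ becomes a unit in $\R/\mcl{Q}_s$. The cross-ratio hypothesis $a_{i_s} b_j \equiv b_{i_s} a_j \mod \mcl{Q}_s$ can then be rewritten as $b_j \equiv (a_{i_s}^{-1} b_{i_s})\,a_j \mod \mcl{Q}_s$ for every $j$. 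Reducing modulo $\mcl{M}_s$ and using that at least one $\ol{b_j} \neq 0$ in $\R/\mcl{M}_s$ (again by unimodularity of the $b$-tuple) one concludes $b_{i_s} \notin \mcl{M}_s$, hence $b_{i_s}$ is also a unit modulo $\mcl{Q}_s$. Setting $\gl_s := a_{i_s} b_{i_s}^{-1}$ then delivers $a_j \equiv \gl_s b_j \mod \mcl{Q}_s$ for all $j$.

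The step I expect to be the main obstacle is precisely the passage from ``unit modulo $\mcl{M}_s$'' to ``unit modulo $\mcl{Q}_s$'': this is where the assumption that each $\mcl{Q}_s$ has maximal radical is essential, since it is what makes $\R/\mcl{Q}_s$ local so that both $a_{i_s}$ and $b_{i_s}$ can be inverted. Without this hypothesis the cross-ratio relations still hold but the scalar $\gl_s = a_{i_s}b_{i_s}^{-1}$ cannot be formed, and one cannot recover an equivalence of the $\sim^{\{k,(1,1,\ldots,1)\}}_{\I}$ type.
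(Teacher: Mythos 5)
Your proof is correct and follows essentially the same route as the paper: the forward direction is the same direct computation, and the converse likewise picks an index $i_s$ with $a_{i_s}$ (and then $b_{i_s}$) a unit modulo $\mcl{Q}_s$, forms the componentwise ratio $\gl_s$, and glues via the Chinese Remainder Theorem. The only difference is expository: you spell out the facts the paper leaves implicit, namely that $\R/\mcl{Q}_s$ is local because rad$(\mcl{Q}_s)=\mcl{M}_s$ is maximal, that the $\mcl{Q}_s$ are pairwise co-maximal, and that $\us{s=1}{\os{l}{\prod}}\mcl{Q}_s=\us{s=1}{\os{l}{\bigcap}}\mcl{Q}_s$.
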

\begin{proof}
Suppose $(a_0,a_1,a_2,\ldots,a_k)\sim^{\{k,(1,1,\ldots,1)\}}_{\I}(b_0,b_1,b_2,\ldots,b_k)$ then we clearly have \linebreak $(a_0,a_1,a_2,\ldots,a_k)\sim^R_{\I}(b_0,b_1,b_2,\ldots,b_k)$. 
Conversely if $(a_0,a_1,a_2,\ldots,a_k)\sim^R_{\I}(b_0,b_1,b_2,\ldots,b_k)$ then we observe the following. For every $1\leq r\leq l$ there exists $a_{i_r}\nin \mcl{M}_r$ for some $1\leq i_r\leq l$
because we have $\us{i=0}{\os{k}{\sum}}(a_i)=\R$. Now we have $b_{i_r}\nin \mcl{M}_r$. Otherwise $b_ja_{i_r}=b_ja_{i_r}-b_{i_r}a_j+b_{i_r}a_j \in \mcl{M}_r \Ra b_j\in \mcl{M}_r$ for all $0\leq j\leq k$.
This is a contradiction to $\us{i=0}{\os{k}{\sum}}(b_i)=\R$. So both $a_{i_r},b_{i_r}$ are units modulo $\mcl{Q}_r$. Hence we can choose $\gl_r\in \R$ such that $\gl_r\equiv \frac{b_{i_r}}{a_{i_r}} \mod \mcl{Q}_r$. 
We have $b_i\equiv \gl_r a_i\mod \mcl{Q}_r,0\leq i\leq k,1\leq r\leq l$.
By Chinese Remainder Theorem for ideals there exists $\gl\in \R$ such that $\gl \equiv \gl_r\equiv \frac{b_{i_r}}{a_{i_r}} \mod \mcl{Q}_r$ and hence $\ol{\gl}\in \bigg(\frac{\R}{\I}\bigg)^{*}$.
We also have $b_i\equiv \gl a_i \mod \I, 0\leq i\leq k$. Hence $(a_0,a_1,a_2,\ldots,a_k)\sim^{\{k,(1,1,\ldots,1)\}}_{\I}(b_0,b_1,b_2,\ldots,b_k)$. This proves the lemma.
\end{proof}


\section{\bf{Preliminaries}}
In this section we present some lemmas and propositions which are useful in the proof of main results.
\subsection{\bf{On arithmetic progressions}}
\label{sec:FundLemma}
In this section we prove a very useful lemma on arithmetic
progressions for integers and a proposition in the context of commutative rings with identity.
Remark~\ref{remark:FundLemma} below summarizes Lemma~\ref{lemma:FundLemma} and Proposition~\ref{prop:FundLemmaRings} in this section. 


\begin{lemma}[A lemma on Arithmetic Progressions for Integers]
\label{lemma:FundLemma}
~\\
Let $a,b\in \Z$ be integers such that $gcd(a,b)=1$. Let $m\in \Z$ be any non-zero integer. Then there exists $n_0\in \Z$ such that $gcd(a+n_0b,m)=1$.
\end{lemma}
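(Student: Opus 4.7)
The plan is to reduce the question to one residue condition modulo each prime divisor of $m$, and then invoke the Chinese Remainder Theorem to glue the resulting local choices. Since $\gcd(a + n_0 b, m) = 1$ is equivalent to saying that no prime divisor of $m$ divides $a + n_0 b$, it suffices to ensure $p \nmid (a + n_0 b)$ for every prime $p$ in the finite set $P = \{\text{prime divisors of } m\}$.

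First I would split $P$ into $P_1 = \{p \in P : p \mid b\}$ and $P_2 = \{p \in P : p \nmid b\}$. For $p \in P_1$, the hypothesis $\gcd(a,b) = 1$ forces $p \nmid a$, and since $p \mid b$ we have $a + n_0 b \equiv a \not\equiv 0 \pmod{p}$ regardless of the choice of $n_0$. So primes in $P_1$ never obstruct us, and all the work concentrates on $P_2$.

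For $p \in P_2$, the element $b$ is a unit modulo $p$, so the congruence $a + n_0 b \equiv 0 \pmod{p}$ has the unique solution $n_0 \equiv -a b^{-1} \pmod{p}$. Hence avoiding $p \mid (a + n_0 b)$ is equivalent to imposing a single forbidden residue class of $n_0$ modulo $p$, leaving $p - 1 \geq 1$ allowed residues. Apply the Chinese Remainder Theorem to the pairwise coprime moduli $\{p : p \in P_2\}$: there exists an $n_0 \in \Z$ whose reduction modulo each $p \in P_2$ avoids the one forbidden residue class. For such an $n_0$, no prime of $P_1 \cup P_2 = P$ divides $a + n_0 b$, so $\gcd(a + n_0 b, m) = 1$, as desired.

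There is essentially no obstacle here: the whole argument rests on the fact that $\gcd(a,b)=1$ automatically handles the primes of $m$ shared with $b$, and that the remaining primes each exclude only a single residue class, which leaves room for CRT to assemble a simultaneous solution. The only mild subtlety is the bookkeeping step of separating $P$ into $P_1$ and $P_2$, which is what allows the inverse $b^{-1} \pmod p$ used in the CRT step to exist.
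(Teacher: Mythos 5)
Your proof is correct and follows essentially the same route as the paper's: primes of $m$ dividing $b$ are harmless because $\gcd(a,b)=1$, while each remaining prime forbids exactly one residue class of $n_0$ (the paper calls it $t_q$, you write it as $-ab^{-1} \bmod p$), and the Chinese Remainder Theorem assembles a simultaneous choice avoiding all of them. No gaps; the argument matches the paper's in substance.
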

\begin{proof}
Assume $a,b$ are both non-zero. Otherwise 
Lemma~\ref{lemma:FundLemma} is trivial. Let
$q_1,q_2,q_3,\ldots,q_t$ be the distinct prime factors of $m$.
Suppose $q \mid gcd(m,b)$ then $q \nmid a+nb$ for all $n \in
\mbb{Z}$. Such prime factors $q$ need not be considered. Let $q \mid
m, q \nmid b$. Then there exists $t_q \in \mbb{Z}$ such that the
exact set of elements in the given arithmetic progression divisible
by $q$ is given by \equ{\ldots, a+(t_q-2q)b, a+(t_q-q)b, a+t_qb,
a+(t_q+q)b,a+(t_q+2q)b \ldots} Since there are finitely many such
prime factors for $m$ which do not divide $b$ we get a set of
congruence conditions for the multiples of $b$ as $n \equiv t_q \mod\ q$. In order to get an $n_0$ we solve a different set of
congruence conditions for each such prime factor say for example $n
\equiv t_q+1 \mod\ q$. By Chinese remainder theorem we have such
solutions $n_0$ for $n$ which therefore satisfy $gcd(a+n_0b,m)=1$.
\end{proof}


\begin{prop}
\label{prop:FundLemmaRings} Let $\R$ be a commutative ring with identity. Let $f,g\in \R$ and they generate the unit ideal, that is, $(f)+(g)=\R$. Let $E$ be any
finite set maximal ideals in $\R$. Then there exists an element $a
\in \R$ such that $f+ag$ is a non-zero element in $\frac{\R}{\mcl{M}}$
for every $\mcl{M}\in E$.
\end{prop}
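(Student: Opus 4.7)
The plan is to split the finite set $E$ according to whether $g$ lies in the maximal ideal or not, and then use the ordinary Chinese Remainder Theorem on the subset of maximal ideals where $g$ is a unit.

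First I would partition $E = E_1 \sqcup E_2$, where $E_1 = \{\mcl{M} \in E \mid g \in \mcl{M}\}$ and $E_2 = E \setminus E_1$. For any $\mcl{M} \in E_1$ and any choice of $a$, we have $f + ag \equiv f \pmod{\mcl{M}}$; so the condition at such $\mcl{M}$ is automatic, because from $g \in \mcl{M}$ together with $(f) + (g) = \R$ it follows that $f \notin \mcl{M}$ (otherwise $\mcl{M} \supseteq (f) + (g) = \R$, a contradiction). Thus these maximal ideals impose no restriction on $a$, and if $E_2 = \emptyset$ we may simply take $a = 0$.

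For each $\mcl{M} \in E_2$, the element $g$ is a unit modulo $\mcl{M}$, so there is a unique residue $\gamma_{\mcl{M}} \in \R/\mcl{M}$ satisfying $f + \gamma_{\mcl{M}} g \equiv 0 \pmod{\mcl{M}}$. Because $\R/\mcl{M}$ is a field and hence has at least two elements, I can pick some $\gd_{\mcl{M}} \in \R/\mcl{M}$ with $\gd_{\mcl{M}} \neq \gamma_{\mcl{M}}$; then $f + \gd_{\mcl{M}} g \not\equiv 0 \pmod{\mcl{M}}$.

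Next I would apply the Chinese Remainder Theorem to the finitely many pairwise co-maximal ideals $\{\mcl{M} \mid \mcl{M} \in E_2\}$ (distinct maximal ideals in a commutative ring with unity are automatically pairwise co-maximal) to produce an element $a \in \R$ with $a \equiv \gd_{\mcl{M}} \pmod{\mcl{M}}$ for every $\mcl{M} \in E_2$. Then $f + ag \not\equiv 0 \pmod{\mcl{M}}$ for every $\mcl{M} \in E_2$, while $f + ag \equiv f \not\equiv 0 \pmod{\mcl{M}}$ for every $\mcl{M} \in E_1$. This yields the required element $a$. No step of this argument is a real obstacle; the only subtlety worth flagging is making sure that $\R/\mcl{M}$ has an element different from $\gamma_{\mcl{M}}$, which holds because every field has at least two elements.
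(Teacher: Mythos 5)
Your proposal is correct and follows essentially the same route as the paper's proof: handle the maximal ideals containing $g$ by noting $f\notin\mcl{M}$ there, and for the remaining ones use the fact that the bad values of $a$ form a single residue class modulo $\mcl{M}$ (your $\gamma_{\mcl{M}}$, the paper's $t_{\mcl{M}}$), then avoid it via the Chinese Remainder Theorem. The only cosmetic difference is that the paper picks the specific residue $t_{\mcl{M}}+1$ while you pick an arbitrary $\gd_{\mcl{M}}\neq\gamma_{\mcl{M}}$.
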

\begin{proof}
Let $E=\{\mcl{M}_1,\mcl{M}_2,\ldots,\mcl{M}_t\}$. If $g\in \mcl{M}_i$ then for all $a\in
\R,f+ag\nin \mcl{M}_i$. Otherwise both $f,g \in \mcl{M}_i$ which is a
contradiction to $1 \in (f,g)$.

Consider the finitely many maximal ideals $\mcl{M}\in E$ such
that $g \nin \mcl{M}$. Then there exists $t_{\mcl{M}}$ such that the
set \equ{\{t \mid f+tg \in \mcl{M}\}=t_{\mcl{M}}+\mcl{M}} a complete
arithmetic progression. This can be proved as follows. Since 
$g \nin \mcl{M}$ we have $(g)+\mcl{M}=\R$. So
there exists $t_{\mcl{M}}$ such that $f+t_{\mcl{M}}g \in \mcl{M}$.
If $f+tg \in \mcl{M}$ then $(t-t_{\mcl{M}})g \in \mcl{M}$. Hence
$t \in t_{\mcl{M}}+\mcl{M}$.

Since there are finitely many maximal ideals $\mcl{M}$ such that $g
\nin \mcl{M}$ in the set $E$ we get a finite set of congruence
conditions for the multiples $a$ of $g$ as $a \equiv t_{\mcl{M}}
\mod\ \mcl{M}$. In order to get an $a_0$ we solve a different set of
congruence conditions for each such maximal ideal in $E$ say for
example $a \equiv t_{\mcl{M}}+1 \mod\ \mcl{M}$. By Chinese Remainder
Theorem we have such solutions $a_0$ for $a$ which therefore satisfy
$f+a_0g \nin \mcl{M}$ for all maximal ideals $\mcl{M} \in E$ and
hence  $f+a_0g\nin \mcl{M}$ for every $\mcl{M} \in E$. This proves 
Proposition~\ref{prop:FundLemmaRings}.
\end{proof}


\begin{remark}
\label{remark:FundLemma}
If $a,b\in \Z,gcd(a,b)=1$ then there exist $x,y\in \Z$ such that $ax+by=1$.
Here we note that in general $x$ need not be one unless $a\equiv 1\mod\ b$.
However for any non-zero integer $m$ we can always choose $x=1$ to find
an integer $a+by$ such that $gcd(a+by,m)=1$. In the context of rings this observation
gives rise to elements which are outside a given finite set of maximal ideals.
\end{remark}


\subsection{\bf{The unital lemma}}
\label{sec:UnitalLemma}
In this section we prove unital Lemma~\ref{lemma:Unital} which is useful to obtain a
unit modulo a certain type of an ideal in a $k\operatorname{-}$row unital vector via an $SL_k(\R)\operatorname{-}$elementary
transformation. We define in Definition~\ref{defn:unitalset} below, when a finite subset of a commutative ring $\R$ is a unital set.


\begin{defn}
\label{defn:unitalset}
Let $\R$ be a commutative ring with unity. Let $k\in \mbb{N}$. We say a finite subset 
\equ{\{a_1,a_2,\ldots,a_k\}\subs \R} consisting of $k\operatorname{-}$elements (possibly with repetition) 
is unital or a unital set if the ideal generated by the elements of the set is a unit ideal.
\end{defn}


Based on the previous definition, we make a relevant definition, the unital set condition for an ideal.
\begin{defn}[Unital set condition for an ideal]
\label{defn:UnitalSetCond} Let $\R$ be a commutative ring with unity. Let $k\in \mbb{N}$ and $\mcl{I} \sbnq \mcl{\R}$ be an ideal. 
We say $\mcl{I}$ satisfies unital set condition $USC$ if for every unital set
$\{a_1,a_2,\ldots,a_k\} \subs \R$ with $k \geq 2$, there exists an
element $b \in (a_2,\ldots,a_k)$ such that $a_1+b$ is a unit modulo
$\mcl{I}$.
\end{defn}
\begin{example}
In the ring $\Z$ any ideal $0\neq \mcl{I}\subsetneq \Z$ satisfies USC using Lemma~\ref{lemma:FundLemma}. 
\end{example}
We state a proposition which gives a criterion for $USC$.
\begin{prop}
\label{prop:Unital} Let $\R$ be a commutative ring with unity. Let
$\mcl{J}\sbnq \R$ be an ideal contained in only a finitely many maximal
ideals. Then $\mcl{J}$ satisfies $USC$, that is, if $k \geq 2$ is a positive integer and if $\{a_1,a_2,\ldots,a_k\} \subs
\R$ is a unital set i.e. $\us{i=1}{\os{k}{\sum}}(a_i)=\mcl{R}$, then there exists $a \in (a_2,\ldots,a_k)$ such that
$a_1+a$ is a unit mod $\mcl{J}$.
\end{prop}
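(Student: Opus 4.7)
\textbf{Proof Plan for Proposition~\ref{prop:Unital}.} The plan is to reduce the statement to Proposition~\ref{prop:FundLemmaRings} by first collapsing the tail $a_2,\ldots,a_k$ into a single element $b\in(a_2,\ldots,a_k)$ that together with $a_1$ generates the unit ideal, and then picking a multiplier so that the avoidance works simultaneously at every maximal ideal containing $\mcl{J}$.

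First I would translate ``unit modulo $\mcl{J}$'' into an avoidance condition. Let $E=\{\mcl{M}_1,\ldots,\mcl{M}_t\}$ be the finite set of maximal ideals of $\R$ containing $\mcl{J}$. Since the maximal ideals of $\R/\mcl{J}$ correspond bijectively to the elements of $E$, an element $x\in\R$ is a unit modulo $\mcl{J}$ if and only if $x\notin\mcl{M}$ for every $\mcl{M}\in E$. Hence the problem becomes: find $a\in(a_2,\ldots,a_k)$ such that $a_1+a\notin\mcl{M}$ for all $\mcl{M}\in E$.

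Next, using the unital hypothesis $\us{i=1}{\os{k}{\sum}}(a_i)=\R$, choose $r_1,\ldots,r_k\in\R$ with $r_1a_1+r_2a_2+\cdots+r_ka_k=1$ and set
\equ{b=r_2a_2+\cdots+r_ka_k\in(a_2,\ldots,a_k).}
Then $r_1a_1+b=1$, so $(a_1)+(b)=\R$. Now apply Proposition~\ref{prop:FundLemmaRings} with $f=a_1$, $g=b$ and the finite set $E$ of maximal ideals: it produces an element $a'\in\R$ such that $a_1+a'b$ has nonzero image in $\R/\mcl{M}$ for every $\mcl{M}\in E$. Setting $a:=a'b\in(a_2,\ldots,a_k)$, we obtain $a_1+a\notin\mcl{M}$ for every $\mcl{M}\in E$, which by the first paragraph means $a_1+a$ is a unit modulo $\mcl{J}$.

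There is no real obstacle in this argument; the only non-routine step is recognizing that one can compress $a_2,\ldots,a_k$ into a single generator $b\in(a_2,\ldots,a_k)$ co-maximal with $a_1$, which is exactly what Bezout's identity inside the unital set provides. Once this reduction is in place, Proposition~\ref{prop:FundLemmaRings} (which is where the finiteness of the set of maximal ideals is essential, via a Chinese Remainder argument) supplies the required multiplier.
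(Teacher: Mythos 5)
Your proposal is correct and follows essentially the same route as the paper: compress $a_2,\ldots,a_k$ via Bezout into a single element $d\in(a_2,\ldots,a_k)$ with $(a_1)+(d)=\R$, then apply Proposition~\ref{prop:FundLemmaRings} to the finite set of maximal ideals containing $\mcl{J}$ to obtain a multiplier $n_0$ with $a_1+n_0d$ outside all of them, hence a unit modulo $\mcl{J}$. The only difference is that you spell out the (correct) equivalence between being a unit modulo $\mcl{J}$ and avoiding every maximal ideal containing $\mcl{J}$, which the paper leaves implicit.
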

\begin{proof}
Let $\{\mcl{M}_i:1\leq i\leq t\}$ be the finite set of maximal
ideals containing $\mcl{J}$. For example $\mcl{J}$ could be a
product of maximal ideals. Since the set $\{a_1,a_2,\ldots,a_k\}$ is
unital there exists $d \in (a_2,a_3,\ldots,a_k)$ such that
$(a_1)+(d)=(1)$. We apply Proposition~\ref{prop:FundLemmaRings}, where $E=\{\mcl{M}_i:1\leq i\leq t\}$ to conclude that
there exists $n_0 \in \R$ such that $a=n_0d$ and $a_1+a=a_1+n_0d \nin
\mcl{M}_i$ for $1 \leq i\leq t$. This proves 
Proposition~\ref{prop:Unital}.
\end{proof}
\begin{example}
Let $\R$ be a prinicipal ideal domain or more generally a Dedekind domain. Then any non-zero ideal satisfies USC.
\end{example}

\begin{lemma}
\label{lemma:Unital} Let $\R$ be a commutative ring with unity and
$k \geq 2$ be a positive integer. Let $\{a_1,a_2,\ldots,a_k\} \subs
\R$ be a unital set i.e. $\us{i=1}{\os{k}{\sum}}(a_i)=\mcl{R}$ and
$E$ be a finite set of maximal ideals in $\R$. Then there exists
$a \in (a_2,\ldots,a_k)$ such that $a_1+a \nin
\mcl{M}$ for all $\mcl{M} \in E$.
\end{lemma}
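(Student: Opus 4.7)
The plan is to mimic the proof of Proposition~\ref{prop:Unital} almost verbatim; the only difference is that the finite set of maximal ideals $E$ is given directly rather than arising as the set of maximal ideals containing an ideal $\mcl{J}$, so no reference to such a $\mcl{J}$ is needed in the argument.

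First I would use the unital hypothesis to produce a single element $d\in(a_2,\ldots,a_k)$ that is co-maximal with $a_1$. Since $\us{i=1}{\os{k}{\sum}}(a_i)=\R$, there exist $r_1,r_2,\ldots,r_k\in\R$ with $r_1a_1+r_2a_2+\cdots+r_ka_k=1$. Setting $d=r_2a_2+\cdots+r_ka_k$ gives $d\in(a_2,\ldots,a_k)$ and $r_1a_1+d=1$, so in particular $(a_1)+(d)=\R$.

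Next I would invoke Proposition~\ref{prop:FundLemmaRings} with $f=a_1$, $g=d$, and the given finite set $E$ of maximal ideals. This yields an element $n_0\in\R$ such that $a_1+n_0d\notin\mcl{M}$ for every $\mcl{M}\in E$. Taking $a=n_0d$, which lies in $(a_2,\ldots,a_k)$ because $d$ does, produces $a_1+a=a_1+n_0d\notin\mcl{M}$ for all $\mcl{M}\in E$, as required.

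There is really no obstacle here: the only mildly non-trivial step is the reduction from many generators $a_2,\ldots,a_k$ to a single generator $d$ so that Proposition~\ref{prop:FundLemmaRings} can be applied as stated, but that is handled by a one-line linear combination. Everything else is a direct citation of Proposition~\ref{prop:FundLemmaRings}.
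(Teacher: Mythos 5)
Your proof is correct and is essentially the paper's own argument: the paper proves Lemma~\ref{lemma:Unital} by noting it is the same as the proof of Proposition~\ref{prop:Unital} (produce $d\in(a_2,\ldots,a_k)$ with $(a_1)+(d)=\R$ and then apply Proposition~\ref{prop:FundLemmaRings} to the given finite set $E$), which is exactly what you did.
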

\begin{proof}
The proof is essentially similar to Proposition~\ref{prop:Unital} even though we need not have to construct an ideal
$\mcl{J}$ which is contained in exactly the maximal ideals in the set $E$.
\end{proof}


\subsection{\bf{A result of strong approximation type}}
\label{sec:SMR}
~\\
This is a result of strong approximation type.
Here we give a criterion called the $USC$ which is
given in Definition~\ref{defn:UnitalSetCond} and mention the following
surjectivity theorem which is stated as:
\begin{theorem}
\label{theorem:SurjModIdeal} Let $\R$ be a commutative ring with
unity. Let $k\in \mbb{N}$. Let \equ{SL_k(\R)=\{A\in M_{k\times k}(\R) \mid \Det(A)=1\}}
Let $\mcl{I} \sbnq \R$ be an ideal which satisfies the unital
set condition (see Definition~\ref{defn:UnitalSetCond}). Then the reduction map
\equ{SL_k(\R) \lra SL_k(\frac{\R}{\mcl{I}})} is surjective. 
\end{theorem}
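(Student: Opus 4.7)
The plan is to factor through the elementary subgroup: I would show that every $\bar{A}\in SL_k(\R/\mcl{I})$ is a product of elementary matrices $E_{ij}(\bar{r})=I+\bar{r}\,e_{ij}$ with $i\neq j$. Any such elementary matrix lifts to $E_{ij}(r)\in SL_k(\R)$ by choosing any preimage $r\in\R$ of $\bar{r}$, and reduction mod $\mcl{I}$ is a group homomorphism, so a decomposition $\bar{A}=\bar{E}_1\cdots\bar{E}_m$ lifts term by term to $A=E_1\cdots E_m\in SL_k(\R)$ which reduces to $\bar{A}$. Surjectivity is therefore reduced to showing that $SL_k(\R/\mcl{I})$ is generated by elementary matrices, and this is exactly what USC powers.

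The first step is to transport USC from $\R$ down to $\R/\mcl{I}$. Given a unital tuple $(\bar{a}_1,\ldots,\bar{a}_k)$ in $\R/\mcl{I}$, lift to $(a_1,\ldots,a_k)\in\R^k$; then $(a_1,\ldots,a_k)+\mcl{I}=\R$, so some $j\in\mcl{I}$ satisfies $(a_1,\ldots,a_k,j)=\R$, making $\{a_1,\ldots,a_k,j\}$ a unital set in $\R$ of size $\geq 2$. Applying USC produces $b\in(a_2,\ldots,a_k,j)$ with $a_1+b$ a unit modulo $\mcl{I}$. Reducing modulo $\mcl{I}$ kills the $j$-contribution, leaving $\bar{b}\in(\bar{a}_2,\ldots,\bar{a}_k)$ with $\bar{a}_1+\bar{b}\in(\R/\mcl{I})^{\ast}$. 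This ``derived USC in $\R/\mcl{I}$'' is what drives the elimination.

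Now I would induct on $k$. The case $k=1$ is trivial. For $k\geq 2$, the first column of $\bar{A}$ is unital (since $\det\bar{A}=1$), so the derived USC supplies an elementary row operation $R_1\mapsto R_1+\sum_{i\geq 2}\bar{r}_i R_i$ placing a unit $\bar{u}$ at position $(1,1)$. I would then clear the rest of the first column by elementary row operations $R_i\mapsto R_i-\bar{u}^{-1}\bar{a}'_{i1}R_1$, and the rest of the first row by analogous column operations, reducing $\bar{A}$ to the block form $\mathrm{diag}(\bar{u},\bar{A}')$ with $\det\bar{A}'=\bar{u}^{-1}$. Writing
\equ{\mathrm{diag}(\bar{u},\bar{A}')=\mathrm{diag}(\bar{u},\bar{u}^{-1},1,\ldots,1)\cdot\mathrm{diag}\bigl(1,\mathrm{diag}(\bar{u},1,\ldots,1)\bar{A}'\bigr),}
the first factor is a product of six elementaries via the classical Whitehead identity
\equ{\mattwo{u}{0}{0}{u^{-1}}=E_{12}(u)\,E_{21}(-u^{-1})\,E_{12}(u)\cdot E_{12}(-1)\,E_{21}(1)\,E_{12}(-1)}
embedded in the upper-left $2\times 2$ block, while the second factor is block-diagonal with an $SL_{k-1}(\R/\mcl{I})$ matrix in the lower-right and is handled by the induction hypothesis.

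The main obstacle is the transport step: a unital tuple in $\R/\mcl{I}$ need not lift to a unital tuple in $\R$, so USC does not apply on the nose. Enlarging the lift by an auxiliary element $j\in\mcl{I}$ restores unitality in $\R$, and the precise form of USC (improvement by an element drawn from the tail of the tuple) is exactly what is needed so that the improvement descends to the desired relation in $\R/\mcl{I}$. Once this translation is established, the rest is standard Gaussian elimination combined with Whitehead's lemma.
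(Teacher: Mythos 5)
Your proposal is correct, but a direct comparison is not possible within this paper: Theorem~\ref{theorem:SurjModIdeal} is not proved here, it is quoted from Theorem 1.7 of \cite{CPAK}. Taken on its own terms, your argument is complete: reducing surjectivity to generation of $SL_k(\R/\mcl{I})$ by elementary matrices and lifting elementaries entrywise is sound; your transport of USC to $\R/\mcl{I}$ works because the auxiliary element $j\in\mcl{I}$ added to the lifted tuple disappears upon reduction, leaving $\bar b=\sum_{i\ge 2}\bar r_i\bar a_i$ as an explicit combination of the remaining entries, so the improving step is realized by the product of elementaries $\prod_{i\ge 2}E_{1i}(\bar r_i)$; the first column of $\bar A$ is indeed unital since $\det\bar A=1$; and both the block identity $\mathrm{diag}(\bar u,\bar A')=\mathrm{diag}(\bar u,\bar u^{-1},1,\ldots,1)\cdot\mathrm{diag}\bigl(1,\mathrm{diag}(\bar u,1,\ldots,1)\bar A'\bigr)$ and the six-elementary Whitehead factorization of $\mathrm{diag}(u,u^{-1})$ check out by direct computation. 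Two small points are worth making explicit. First, your induction (and the transport step, which enlarges the tuple by one) uses USC for unital sets of every size $\ge 2$; Definition~\ref{defn:UnitalSetCond} is phrased with a fixed $k$ declared at the outset, so you should state that you read it as quantified over all sizes, which is clearly the intent and is how the paper verifies it (Proposition~\ref{prop:Unital} gives it for all $k\ge 2$). Second, the definition distinguishes the element $a_1$ of an unordered set, so you should note that you enumerate your set with the lift of $\bar a_1$ first and the auxiliary element $j$ last; with that convention USC delivers exactly the element you need. With these readings made explicit, your proof stands as a self-contained alternative to the citation of \cite{CPAK}.
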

A proof of Theorem~\ref{theorem:SurjModIdeal} can be found in~(refer to Theorem $1.7$ on Page $338$) C.~P.~Anil~Kumar~\cite{CPAK}. A survey of results on strong approximation can be found in A.~S.~Rapinchuk~\cite{SA}. 


\subsection{\bf{Choice multiplier hypothesis for a tuple with respect to an ideal}}
We define a key concept known as choice multiplier hypothesis $CMH$ for a tuple with respect to an ideal. This definition is useful in the proof of main Theorem~\ref{theorem:GenCRTSURJ}.
\begin{defn}[$CMH$ for a tuple with respect to an Ideal]
\label{defn:CMH}
~\\
Let $\R$ be a commutative ring with unity and $\mcl{I}\sbnq \R$ be
any ideal. Let $n > 1$ be any positive integer and
$(x_1,x_2,\ldots,x_n) \in \R^n$ be such that $(x_1)+(x_2)+\ldots+(x_n)+\mcl{I}=\R$. Suppose $\R$ has the property
that there exist $a_1,a_2,\ldots,a_n \in \R$ such that
$(a_1)+(a_2)+\ldots+(a_n)=\R$ and $a_1x_1+a_2x_2+\ldots+a_nx_n \in
1+\mcl{I}$. Then we say $\R$ satisfies $CMH$
for the tuple $(x_1,x_2,\ldots,x_n) \in \R^n$ with respect to the ideal $\mcl{I}$. 
\end{defn}
\begin{remark}
A class of rings which satisfy CMH with respect to any proper ideal is given in Theorem~\ref{theorem:CMH}.
\end{remark}
We prove two lemmas which are useful in the proof of main Theorem~\ref{theorem:GenCRTSURJ}.


\begin{lemma}
\label{lemma:CMH}
Let $\R$ be a commutative ring with unity. Let $\mcl{I} \sbnq \R$ be an ideal.
$\R$ always satisfies $CMH$ with respect to the ideal $\mcl{I}$ for any positive integer $n>1$ for all tuples $(x_1,x_2,\ldots,x_n)\in \R^n$
when one of the $x_i:1 \leq i \leq n$ is a unit $\mod \mcl{I}$.
\end{lemma}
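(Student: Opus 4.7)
The plan is to exhibit explicit $a_1,\ldots,a_n$ that witness $CMH$ for any such tuple. Reindexing if necessary, assume $x_1$ is a unit modulo $\mcl{I}$; this immediately gives $(x_1)+\ldots+(x_n)+\mcl{I}=\R$, so the hypothesis of Definition~\ref{defn:CMH} is satisfied and it remains to produce the coefficients.

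Because $x_1$ is a unit mod $\mcl{I}$, pick $y\in \R$ with $y x_1 \equiv 1 \pmod{\mcl{I}}$, and set $i = 1 - y x_1 \in \mcl{I}$. The identity $yx_1 + i = 1$ has two consequences that drive everything: first, $(y) + (i) = \R$; second, $i$ lies in $\mcl{I}$, which will let us absorb any terms multiplied by $i$ modulo $\mcl{I}$. The candidate tuple of multipliers is
\equ{a_1 = y,\quad a_2 = i,\quad a_3 = a_4 = \cdots = a_n = 0,}
which uses the hypothesis $n > 1$ to guarantee that $a_2$ exists.

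Two verifications remain. The ideal generated by $a_1,\ldots,a_n$ contains $(y)+(i)=\R$, so $(a_1)+\ldots+(a_n)=\R$ as required. On the other hand,
\equ{a_1 x_1 + a_2 x_2 + \cdots + a_n x_n = y x_1 + i\, x_2 \equiv 1 + 0 \equiv 1 \pmod{\mcl{I}},}
since $i\in \mcl{I}$ forces $i x_2 \in \mcl{I}$. Both clauses of Definition~\ref{defn:CMH} are thus met for the tuple $(x_1,\ldots,x_n)$ with respect to $\mcl{I}$.

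There is essentially no obstacle here: the statement is a direct unpacking of the definition once one exploits a single Bézout-style relation coming from $x_1$ being invertible modulo $\mcl{I}$. The only thing worth flagging carefully is the role of the assumption $n > 1$, which is exactly what allows the ``correction term'' $i\in \mcl{I}$ to be placed in a separate coordinate from $y$ so that the two-ideal relation $(y)+(i)=\R$ lifts to the full tuple generating $\R$.
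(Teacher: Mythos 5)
Your proposal is correct and follows essentially the same route as the paper: write a B\'ezout-type relation $yx_1+i=1$ with $i\in\mcl{I}$, take $a_1=y$, $a_2=i$, and set the remaining multipliers to zero, then verify the two clauses of the $CMH$ definition. The only cosmetic difference is that you phrase the final check as a congruence modulo $\mcl{I}$ while the paper writes $a_1x_1+a_2x_2=1+t(x_2-1)\in 1+\mcl{I}$ explicitly; these are the same verification.
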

\begin{proof}
Let $n\geq 2$. Let $(x_1,x_2,\ldots,x_n)\in \R^n$. Without loss of generality let $x_1$ be a unit modulo $\mcl{I}$. Let $ax_1+t=1$ for some $a_1\in \R,t\in \I$. 
Then we choose $a_1=a,a_2=t$ and $a_3=\ldots=a_n=0$. We have $(a_1)+(a_2)=\R$ and $a_1x_1+a_2x_2=ax+tx_2=1+t(x_2-1)\in 1+\I$. This proves the lemma.
\end{proof}
\begin{lemma}
\label{lemma:CMHimpliesUnitalVect}
Let $\R$ be a commutative ring with unity and $\I\sbnq R$ be an ideal. Let $n\in \mbb{N}_{>1}$ and
$(x_1,x_2,\ldots,x_n) \in \R^n$ be such that $(x_1)+(x_2)+\ldots+(x_n)+\mcl{I}=\R$. Suppose $\R$ satisfies CMH for the tuple $(x_1,x_2,\ldots,x_n)\in \R^n$
with respect to the ideal $\I$. Then there exists $t_1,t_2,\ldots,t_n\in\I$ such that $\us{i=1}{\os{n}{\sum}}(x_i+t_i)=\R$. 
\end{lemma}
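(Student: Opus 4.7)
The plan is to unwind the CMH hypothesis directly and then use the unimodularity of the tuple $(a_1,\ldots,a_n)$ produced by CMH to absorb the $\I$-error in a single coordinate correction. Concretely, by Definition~\ref{defn:CMH} applied to $(x_1,x_2,\ldots,x_n)$ and $\I$, I first obtain elements $a_1,a_2,\ldots,a_n\in\R$ with $(a_1)+(a_2)+\ldots+(a_n)=\R$ and $a_1x_1+a_2x_2+\ldots+a_nx_n = 1+s$ for some $s\in\I$. Since the $a_i$'s generate the unit ideal, I can choose $b_1,b_2,\ldots,b_n\in\R$ satisfying $a_1b_1+a_2b_2+\ldots+a_nb_n=1$.

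Next, I would define the candidate shifts $t_i \eqdef -s\,b_i$ for $1\le i\le n$. Each $t_i$ lies in $\I$ because $s\in\I$ and $\I$ is an ideal. Plugging in and using the identity $\sum_i a_ib_i=1$, a one-line computation gives
\equa{
\sum_{i=1}^{n} a_i(x_i+t_i) \;=\; \sum_{i=1}^{n} a_ix_i \;-\; s\sum_{i=1}^{n} a_ib_i \;=\; (1+s)-s \;=\; 1.
}
This exhibits $1$ as an $\R$-linear combination of $x_1+t_1,\ldots,x_n+t_n$, so $\sum_{i=1}^n (x_i+t_i)=\R$, which is exactly the conclusion of the lemma.

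There is essentially no obstacle here: the whole argument is a clean algebraic correction, and the point of CMH is precisely to guarantee the existence of the unimodular multiplier tuple $(a_1,\ldots,a_n)$ that makes this correction possible. The only thing worth emphasizing in the write-up is that unimodularity of $(a_1,\ldots,a_n)$ (not merely that $a_1x_1+\ldots+a_nx_n\in 1+\I$) is what lets us dispose of the error term $s$ in one coordinate-wise move, rather than being forced into some inductive or Chinese-remainder-type argument. This is why CMH is stated as it is in Definition~\ref{defn:CMH}.
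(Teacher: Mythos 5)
Your proof is correct and is essentially the paper's own argument: both invoke CMH to get a unimodular tuple $(a_1,\ldots,a_n)$ with $\sum_i a_ix_i \in 1+\I$, pick $b_i$ with $\sum_i a_ib_i=1$, and set $t_i$ equal to the error times $b_i$ (your sign convention $t_i=-sb_i$ versus the paper's $t_i=tb_i$ with error $-t$ is immaterial), so that $\sum_i a_i(x_i+t_i)=1$. No differences worth noting.
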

\begin{proof}
By $CMH$ let $a_1,a_2,\ldots,a_n \in \R$ such that $(a_1)+(a_2)+\ldots+(a_n)=\R$ and $a_1x_1+a_2x_2+\ldots+a_nx_n = 1-t \in 1+\mcl{I}$ where $t\in \I$.
Suppose $b_1a_1+b_2a_2+\ldots+b_na_n=1$. Then we have $a_1(x_1+tb_1)+a_2(x_2+tb_2)+\ldots+a_n(x_n+tb_n)=1$. Choosing $t_i=tb_i$ the lemma follows.
\end{proof}
\begin{theorem}
\label{theorem:CMH}
Let $\R$ be a Dedekind domain and $\mcl{I}\subsetneq \R$ be an ideal. Let $n\in \N_{>1}$ and $(x_1,x_2,\ldots,x_n) \in \R^n$ be such that $(x_1)+(x_2)+\ldots+(x_n)+\mcl{I}=\R$. Then $\R$ satisfies $CMH$ for the tuple $(x_1,x_2,\ldots,x_n) \in \R^n$ with respect to the ideal $\mcl{I}$.
\end{theorem}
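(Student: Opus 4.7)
The plan is to reduce $CMH$ for $(x_1,\ldots,x_n)$ to a one-coordinate modification handled by Lemma~\ref{lemma:Unital} applied inside a semilocal quotient of $\R$. If $\mcl{I}=(0)$, then $\us{i=1}{\os{n}{\sum}}(x_i)=\R$ directly yields $a_i\in\R$ with $\sum a_i x_i=1$, and $(a_1,\ldots,a_n)\ni 1$ is automatically unimodular. So assume $\mcl{I}\neq (0)$, and choose $c_1,\ldots,c_n\in\R$ and $t\in\mcl{I}$ with $\sum c_i x_i = 1-t$; in particular $(c_1,\ldots,c_n)+\mcl{I}=\R$. The strategy is to keep $a_i=c_i$ for $i<n$ and set $a_n=c_n+y$ for some $y\in\mcl{I}$; then $\sum a_i x_i=(1-t)+yx_n\in 1+\mcl{I}$ automatically, and only unimodularity of $(a_1,\ldots,a_n)$ remains to be secured.

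Put $\mcl{J}':=(c_1,\ldots,c_{n-1})$. First I would ensure $\mcl{J}'\neq 0$: if the initial coefficients satisfy $c_1=\cdots=c_{n-1}=0$, so that $c_n x_n=1-t$, the substitution $c_1\mapsto x_n$ and $c_n\mapsto c_n-x_1$ preserves $\sum c_i x_i=1-t$ by direct computation; this is legitimate because $x_n\neq 0$, as otherwise $t=1\in\mcl{I}$ would contradict $\mcl{I}\sbnq\R$. Since $\R$ is Dedekind, the nonzero ideal $\mcl{J}'$ is contained in only finitely many maximal ideals, hence $\R/\mcl{J}'$ is semilocal. In $\R/\mcl{J}'$, the identity $(c_1,\ldots,c_n)+\mcl{I}=\R$ becomes $(\bar c_n)+\bar{\mcl{I}}=\R/\mcl{J}'$.

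If $\bar{\mcl{I}}=0$, then $\bar c_n$ is already a unit in $\R/\mcl{J}'$ and $y=0$ works. Otherwise, pick generators $y_1,\ldots,y_m$ of $\mcl{I}$ with $m\geq 1$; then $\{\bar c_n,\bar y_1,\ldots,\bar y_m\}$ is a unital set in $\R/\mcl{J}'$ of size $m+1\geq 2$, and applying Lemma~\ref{lemma:Unital} in the commutative ring $\R/\mcl{J}'$ with $E$ the finite collection of all its maximal ideals produces $\bar y\in(\bar y_1,\ldots,\bar y_m)=\bar{\mcl{I}}$ such that $\bar c_n+\bar y$ avoids every maximal ideal, i.e., is a unit. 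Lifting $\bar y$ to some $y\in\mcl{I}$ gives $\mcl{J}'+(c_n+y)=\R$, whence $(a_1,\ldots,a_n)=\R$ as required. The only subtlety is the shuffle step guaranteeing $\mcl{J}'\neq 0$, without which $\R/\mcl{J}'$ need not be semilocal and Lemma~\ref{lemma:Unital} could not be invoked with a finite $E$; the rest is an immediate application of the framework already developed in the paper.
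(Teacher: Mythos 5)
Your proof is correct, but it takes a genuinely different route from the paper. The paper's proof is a two-line reduction to Proposition $2.27$ of the prequel \cite{CPAK}: it perturbs the tuple itself, producing $t_1,\ldots,t_n\in\mcl{I}$ with $\{x_1+t_1,\ldots,x_n+t_n\}$ unital, and then reads off the Bezout coefficients $a_i$, which are automatically unimodular and satisfy $\sum a_ix_i\in 1+\mcl{I}$. You instead keep the $x_i$ fixed and perturb the coefficient vector: starting from $\sum c_ix_i=1-t$ you adjust only $c_n$ by an element $y\in\mcl{I}$, which leaves $\sum a_ix_i\in 1+\mcl{I}$ for free, and you secure unimodularity by passing to the semilocal quotient $\R/(c_1,\ldots,c_{n-1})$ (Dedekind implies a nonzero ideal lies in only finitely many maximal ideals) and invoking the paper's Lemma~\ref{lemma:Unital} with $E$ the full finite set of maximal ideals there; the swap $c_1\mapsto x_n$, $c_n\mapsto c_n-x_1$ correctly handles the degenerate case $c_1=\cdots=c_{n-1}=0$, and your lifting of $\bar y$ into $\mcl{I}$ is legitimate since any element of $(\mcl{I}+\mcl{J}')/\mcl{J}'$ has a representative in $\mcl{I}$. (The case $\mcl{J}'=\R$, where the quotient is the zero ring, is harmless: then $y=0$ already works.) What the comparison buys: the paper's route is shorter but imports a result from outside this article, whereas yours is self-contained within the present paper's toolkit (ideal avoidance over finitely many maximal ideals via Lemma~\ref{lemma:Unital}), at the cost of the small case analysis; in effect the paper proves the unital-perturbation statement first and deduces $CMH$, while you prove $CMH$ directly, from which the perturbation statement would follow via Lemma~\ref{lemma:CMHimpliesUnitalVect}.
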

\begin{proof}
Using Proposition $2.27$ in C.~P.~Anil Kumar~\cite{CPAK}, we obtain $t_1,t_2,\ldots,t_n\in \mcl{I}$ such that the set $\{x_1+t_1,x_2+t_2,\ldots,x_n+t_n\}$ is unital in $\R$. So there exists $a_1,a_2,\ldots,a_n\in \R$ such that $a_1(x_1+t_1)+a_2(x_2+t_2)+\ldots+a_n(x_n+t_n)=1$. Hence $(a_1)+(a_2)+\ldots+(a_n)=\R$ and $a_1x_1+a_2x_2+\ldots+a_nx_n\in 1+\mcl{I}$. 
\end{proof}

\section{\bf{Chinese Remainder Theorem for Generalized Projective Spaces}}
In this section we prove the first main result which concerns the surjectivity of the Chinese remainder 
reduction map associated to a projective space of an ideal with a given co-maximal ideal factorization by proving first main Theorem~\ref{theorem:GenCRTSURJ}.


\begin{proof}
If $\mcl{I}=\R$ then the proof is easy. Now we ignore unit ideals which occur in the factorization $\mcl{I}=\us{i=1}{\os{k}{\prod}}\mcl{I}_i$.
The theorem holds for $k=1$ and any $l\in \mbb{N}$ as the proof is immediate. We prove by induction on $k$. Let
\equa{([a_{10}:a_{11}:\ldots:a_{1l}],\ldots,&[a_{k0}:a_{k1}:\ldots:a_{kl}])
\in\\& \mbb{PF}^{l,(m_0,m_1,\ldots,m_l)}_{\mcl{I}_1} \times \mbb{PF}^{l,(m_0,m_1,\ldots,m_l)}_{\mcl{I}_2} \times
\ldots \times \mbb{PF}^{l,(m_0,m_1,\ldots,m_l)}_{\mcl{I}_k}.} By induction we
have an element $[b_0:b_1:b_2:\ldots:b_l] \in
\mbb{PF}^{l,(m_0,m_1,\ldots,m_l)}_{\mcl{I}_2\mcl{I}_3\ldots\mcl{I}_k}$ representing the
last $k-1$ elements. Consider the matrix \equ{A=\begin{pmatrix}
a_{10}  & a_{11}  & \cdots & a_{1,l-1} & a_{1l}\\
b_0& b_1& \cdots & b_{l-1} & b_l
\end{pmatrix}}
where we have $\us{i=0}{\os{l}{\sum}}(a_{1i})=\R=\us{i=0}{\os{l}{\sum}}(b_i)$.
We change the matrix $A$ to a suitable matrix $B$ by applying elementary column operations on $A$ or right multiply $A$ by matrices in $SL_{l+1}(\R)$. We keep track of the matrices in $SL_{l+1}(\R)$ used for mulitplication for back tracking later by their inverses at the end. 
Now the ideal $\mcl{I}_1$ is contained in only a finitely many maximal ideals. By using Proposition~\ref{prop:Unital} and a suitable application of $SL_{l+1}(\R)$ matrix, we can assume $a_{10}$ is a unit modulo $\mcl{I}_1$. 
By finding inverse of this element modulo $\mcl{I}_1$ and hence again by a suitable application of $SL_{l+1}(\R)$ matrix, the matrix $A$ can
be transformed to the following matrix $B$, where $a_0$ in the first row is a unit modulo $\mcl{I}_1$ and $a_i\in \mcl{I}_1$ for $1\leq i\leq l$ and also $\us{i=0}{\os{l}{\sum}}(a_i)=\R$.
\equ{B=\begin{pmatrix}
a_0  & a_1  & \cdots & a_{l-1} & a_l\\
c_0& c_1& \cdots & c_{l-1} & c_l
\end{pmatrix}}

The ideal $\mcl{I}_2\ldots \mcl{I}_k$ is contained in only a finitely many maximal ideals. If $c_0$ is not a unit $\mod \mcl{I}_2\ldots \mcl{I}_k$ then, by using
Proposition~\ref{prop:Unital} and a suitable application of $SL_{l+1}(\R)$ matrix, we can assume the first element $c_0$ in the second row of $B$ is a unit modulo $\mcl{I}_2\ldots\mcl{I}_k$
and the first element $a_0$ in the first row will still remain a unit modulo $\mcl{I}_1$ after the transformation.
We have the following facts on the matrix $B$ now.

\begin{enumerate}
\item $a_0$ is a unit modulo $\mcl{I}_1$. 
\item $a_i \in \mcl{I}_1$ for $1\leq i\leq l$.
\item $\us{i=0}{\os{l}{\sum}}(a_i)=\R$.
\item $c_0$ is a unit modulo $\mcl{I}_2\ldots\mcl{I}_k$.
\item $\us{i=0}{\os{l}{\sum}}(c_i)=\R$.
\end{enumerate}

By usual Chinese Remainder Theorem let $x_0\equiv a_0\mod \mcl{I}_1,x_0\equiv c_0 \mod \mcl{I}_2\ldots\mcl{I}_k$. For $1\leq i\leq l$ let $x_i\in \mcl{I}_1,x_i\equiv c_i\mod \mcl{I}_2\ldots\mcl{I}_k$.
Then we have $(x_0)+(x_1)+\ldots+(x_l)+\mcl{I}_1=\R,(x_0)+(x_1)+\ldots+(x_l)+\mcl{I}_2\ldots\mcl{I}_k=\R$ which implies $(x_0)+(x_1)+\ldots+(x_l)+\mcl{I}=\R$. Moreover $x_0$ is a unit modulo $\I$ as it is a unit modulo
both $\mcl{I}_1$ and $\mcl{I}_2\ldots\mcl{I}_k$. Since $l\geq 1$ by using Lemmas~\ref{lemma:CMH},~\ref{lemma:CMHimpliesUnitalVect} (as $CMH$ is satisfied), there exist $t_0,t_1,\ldots,t_l \in \I$
such that $\us{i=0}{\os{l}{\sum}} (x_i+t_i)=\R$ and a required element for $B$ is given by $[x_0+t_0:x_1+t_1:\ldots:x_l+t_l]\in \mbb{PF}^{l,(m_0,m_1,\ldots,m_l)}_{\mcl{I}}$.  By back tracking we can get a required element for $A$. 
Hence the induction step is completed thereby proving surjectivity.

Now we prove injectivity of the map which is slightly easier. Suppose $[a_0:a_1:\ldots:a_l],[b_0:b_1:\ldots:b_l]$ have the same image. Then there exist $\gl_i\in \R,1\leq i\leq k$ such that $\ol{\gl_i}\in \frac{\R}{\mcl{I}_i}$ is a unit and we have $a_j=\gl_i^{m_j}b_j \mod \mcl{I}_i, 0\leq j\leq l, 1\leq i\leq k$. Now we use usual chinese remainder theorem to obtain an element $\gl\in \R$ such that $\gl\equiv \gl_i\mod \mcl{I}_i$. Hence it follows that $\ol{\gl}\in \frac{\R}{\mcl{I}}$ is a unit and $a_j=\gl^{m_j}b_j \mod \mcl{I}, 0\leq j\leq l$. So we get $[a_0:a_1:\ldots:a_l]=[b_0:b_1:\ldots:b_l]$. 
Now Theorem~\ref{theorem:GenCRTSURJ} follows.
\end{proof}


\section{\bf{On the Generalized Surjectivity Theorem for Generalized Projective Spaces}}
In this section we prove two surjectivity theorems in the context of generalized projective spaces. More precisely we prove
following Theorem~\ref{theorem:GenSurjMainGenIdeals} and main Theorem~\ref{theorem:FullGenSurj}.
But first we mention a remark.
\begin{remark}
	In a Dedekind domain for example, any non-Jacobson element is contained in only a finitely many maximal ideals. 
\end{remark}

Theorem~\ref{theorem:GenSurjMainGenIdeals} is stated as follows where a condition on the ring is required:
\begin{theorem}
\label{theorem:GenSurjMainGenIdeals}
Let $\R$ be a commutative ring with unity. Suppose every non-Jacobson element is contained in only a finitely many maximal ideals.
Let $k\in \mbb{N}$ and $\mcl{I}_0,\mcl{I}_1,\ldots,\mcl{I}_k$ be $(k+1)$ co-maximal ideals in $\R$. 
Also if there is exactly one proper ideal $\mcl{I}_j$ for some $0\leq j\leq k$ then we suppose it is contained in only a finitely many maximal ideals.
Let $A_{(k+1)\times (k+1)}=[a_{i,j}]_{0\leq i,j \leq k}\in M_{(k+1)\times (k+1)}(\R)$ such that for every $0\leq i\leq k$ the $i^{th}\operatorname{-}$row is unital, that is, $\us{j=0}{\os{k}{\sum}} (a_{i,j})=\R$
for $0\leq i\leq k$. Then there exists $B=[a_{i,j}]_{0\leq i,j \leq k}\in SL_{k+1}(\R)$ such that we have 
$a_{i,j}\equiv b_{i,j}\mod \mcl{I}_i,0\leq i,j\leq k$.
\end{theorem}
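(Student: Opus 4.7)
The plan is to reduce to producing a suitable matrix modulo $\mcl{I}=\us{i=0}{\os{k}{\prod}}\mcl{I}_i$, split via the Chinese Remainder Theorem, and solve on each factor using unimodular completion in the semi-local quotients $\R/\mcl{I}_i$.

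First I verify that every proper $\mcl{I}_i$ is contained in only finitely many maximal ideals. If at least two of the $\mcl{I}_i$ are proper, pick any two such $\mcl{I}_i,\mcl{I}_j$ and write $x+y=1$ with $x\in\mcl{I}_i$, $y\in\mcl{I}_j$; were $x$ a Jacobson element, then $1-x=y$ would be a unit, forcing $\mcl{I}_j=\R$ and contradicting properness. Hence each proper ideal contains a non-Jacobson element and so, by hypothesis, lies in only finitely many maximal ideals. The single-proper-ideal case is covered by the extra assumption in the statement. Consequently $\mcl{I}$ lies in only finitely many maximal ideals, satisfies $USC$ by Proposition~\ref{prop:Unital}, and Theorem~\ref{theorem:SurjModIdeal} supplies a surjection $SL_{k+1}(\R)\twoheadrightarrow SL_{k+1}(\R/\mcl{I})$. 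It therefore suffices to produce $\bar B\in SL_{k+1}(\R/\mcl{I})$ whose $i$-th row agrees with the $i$-th row of $A$ modulo $\mcl{I}_i$ for every $i$.

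By the usual CRT, $SL_{k+1}(\R/\mcl{I})\cong\us{i=0}{\os{k}{\prod}}SL_{k+1}(\R/\mcl{I}_i)$, where the unit-ideal factors are trivial. For each proper $\mcl{I}_i$, the quotient $\R/\mcl{I}_i$ is semi-local, so applying Proposition~\ref{prop:Unital} to its zero ideal yields $USC$ for $(0)$, which is precisely stable range $1$ for $\R/\mcl{I}_i$. This lets one reduce any unimodular row of length $k+1$ over $\R/\mcl{I}_i$ to $(1,0,\ldots,0)$ by $SL_{k+1}$-elementary column operations, and hence extend any such row to the first row of some matrix in $SL_{k+1}(\R/\mcl{I}_i)$. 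Applying this to the reduction $r^{(i)}$ of the $i$-th row of $A$ (unimodular because the original row is), we obtain $M_i\in SL_{k+1}(\R/\mcl{I}_i)$ with first row $r^{(i)}$; a row swap between rows $0$ and $i$ followed by a single sign correction on another row (trivially nothing when $i=0$) repositions $r^{(i)}$ into row $i$ while preserving $\det=1$, giving $\bar B_i$. Assembling the $\bar B_i$ via the CRT isomorphism produces $\bar B\in SL_{k+1}(\R/\mcl{I})$, which then lifts to the desired $B\in SL_{k+1}(\R)$.

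The principal obstacle is the unimodular-completion step: over arbitrary commutative rings it genuinely fails, as the sphere Example~\ref{Example:NotSurj} demonstrates. The entire point of the ``finitely many maximal ideals'' hypothesis is precisely to force each $\R/\mcl{I}_i$ to be semi-local, so that stable range $1$, and with it unimodular completion of length-$(k+1)$ rows, becomes available and the Chinese-remainder-plus-lifting machinery can be deployed.
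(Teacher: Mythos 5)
Your proposal is correct, but it takes a genuinely different route from the paper's proof. The paper works integrally over $\R$: it uses column operations (via Propositions~\ref{prop:Unital} and~\ref{prop:bringunit}) to bring $A$ to a form in which each diagonal entry is a unit modulo $\mcl{I}_i$ and the off-diagonal entries of row $i$ lie in $\mcl{I}_i$, then invokes Proposition~\ref{prop:diagdetone} --- and this is exactly where the global hypothesis on non-Jacobson elements enters, through Proposition~\ref{prop:ChoiceMaxElements} --- to replace the diagonal by a congruent one whose product is $\equiv 1 \bmod \prod_i\mcl{I}_i$, and only then lifts through Theorem~\ref{theorem:SurjModIdeal} and back-tracks the column operations. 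You dispense with all of the integral matrix manipulation: you split $SL_{k+1}(\R/\mcl{I})$ by the Chinese Remainder Theorem into $\prod_i SL_{k+1}(\R/\mcl{I}_i)$, complete each reduced unimodular row over the semi-local quotient (USC for the zero ideal via Proposition~\ref{prop:Unital}), move it into position $i$ by a swap plus a sign change, assemble, and lift by the same Theorem~\ref{theorem:SurjModIdeal}; the two arguments share only the opening observation (each proper $\mcl{I}_i$ contains a non-Jacobson element, hence lies in finitely many maximal ideals) and the final lifting step. What your route buys: it is shorter, it avoids Propositions~\ref{prop:ChoiceMaxElements}, \ref{prop:diagdetone} and~\ref{prop:bringunit} entirely, and it uses the ring hypothesis only to guarantee that each proper $\mcl{I}_i$ lies in finitely many maximal ideals, so as written it proves the conclusion under that formally weaker hypothesis with no global condition on $\R$; note that this would subsume the separate two-dimensional Theorem~\ref{theorem:FullGenSurjTwoDim} and, fed into the deduction of Theorem~\ref{theorem:FullGenSurj}, would bear on Question~\ref{ques:ProjHighDim} (for instance Example~\ref{Example:OQ}), so that stronger consequence deserves independent scrutiny even though I see no gap in your argument for the theorem as stated. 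What the paper's route buys is explicit control of the transforming matrices over $\R$ itself and the intermediate propositions (notably Proposition~\ref{prop:diagdetone}), which are reused in Theorem~\ref{theorem:GenSurjMain}. Two small points to tidy in your write-up: dispose separately of the trivial case where every $\mcl{I}_i=\R$ (then $\mcl{I}=\R$ and Theorem~\ref{theorem:SurjModIdeal} does not literally apply; take $B$ to be the identity), and note that passing from $(u,0,\ldots,0)$ to $(1,0,\ldots,0)$ uses the standard elementary-column trick, or simply complete $(u,0,\ldots,0)$ directly by a diagonal matrix with entries $u,u^{-1},1,\ldots,1$.
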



Before proving these two theorems we need some results which we will state and prove. 
\begin{lemma}[A lemma on ideal avoidance]
\label{lemma:IdealAvoid}
Let $\R$ be a commutative ring with unity and $\mcl{I}\subs \R$ be an ideal.
Let $r\in \mbb{N}$ and $\mcl{N}_1,\mcl{N}_2,\ldots,\mcl{N}_r$ be maximal ideals
in $\R$ such that $\mcl{I} \nsbq \mcl{N}_i, 1\leq i\leq r$. Then we have 
\equ{\mcl{I}\bs \bigg(\us{i=1}{\os{r}{\bigcup}}\mcl{N}_i\bigg)=\mcl{I}\bs \bigg(\us{i=1}{\os{r}{\bigcup}}\mcl{I}\mcl{N}_i\bigg)\neq \es.}
\end{lemma}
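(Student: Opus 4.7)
The plan is to separate the statement into two assertions: first, the set equality
\[
\mcl{I}\bs \bigg(\us{i=1}{\os{r}{\bigcup}}\mcl{N}_i\bigg)=\mcl{I}\bs \bigg(\us{i=1}{\os{r}{\bigcup}}\mcl{I}\mcl{N}_i\bigg),
\]
and second, that this common set is non-empty. I expect the equality to be the quick computation and the non-emptiness to be the conceptual step, though both are short.

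For the equality, the inclusion $\subseteq$ is immediate from $\mcl{I}\mcl{N}_i\subseteq \mcl{N}_i$, since any $x\in\mcl{I}$ avoiding every $\mcl{N}_i$ automatically avoids every $\mcl{I}\mcl{N}_i$. The reverse inclusion is the substantive part, and the key input is that for each $i$, since $\mcl{N}_i$ is maximal and $\mcl{I}\nsbq \mcl{N}_i$, we have $\mcl{I}+\mcl{N}_i=\R$. This comaximality is precisely what forces
\[
\mcl{I}\cap \mcl{N}_i=\mcl{I}\mcl{N}_i,
\]
by the standard identity for comaximal ideals (write $1=a+b$ with $a\in \mcl{I},\ b\in \mcl{N}_i$, and for $x\in\mcl{I}\cap \mcl{N}_i$ observe $x=ax+bx\in\mcl{I}\mcl{N}_i$). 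Consequently, if $x\in \mcl{I}$ lies in some $\mcl{N}_i$, it already lies in $\mcl{I}\mcl{N}_i$, which gives the inclusion $\supseteq$.

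For non-emptiness, I will invoke the prime avoidance lemma. Since each $\mcl{N}_i$ is maximal and hence prime, and since by hypothesis $\mcl{I}\nsbq \mcl{N}_i$ for every $i$, the usual prime avoidance lemma applied to the finite family $\{\mcl{N}_1,\ldots,\mcl{N}_r\}$ yields
\[
\mcl{I}\nsbq \us{i=1}{\os{r}{\bigcup}}\mcl{N}_i,
\]
so one can pick $x\in \mcl{I}$ with $x\notin \mcl{N}_i$ for every $i$; this $x$ witnesses non-emptiness of the left-hand set, and by the equality already proved, of the right-hand set as well.

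The only possible obstacle is verifying that prime avoidance applies in this generality, but since all $\mcl{N}_i$ are maximal (in particular prime) and finite in number, no additional hypothesis on $\R$ is required. Hence the lemma follows.
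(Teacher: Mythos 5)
Your proof is correct and follows essentially the same route as the paper, which simply invokes prime (ideal) avoidance, citing Proposition 1.11 of Atiyah--MacDonald, to get a point of $\mcl{I}$ outside $\bigcup_i\mcl{N}_i$. The only difference is that you also spell out the set equality via the comaximality identity $\mcl{I}\cap\mcl{N}_i=\mcl{I}\mcl{N}_i$ (valid since $\mcl{N}_i$ is maximal and $\mcl{I}\nsbq\mcl{N}_i$), a detail the paper leaves implicit.
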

\begin{proof}
This follows from ideal avoidance as we have $\mcl{I}\nsbq \mcl{N}_i, 1\leq i\leq r$. Refer to Proposition $1.11$ in Chapter $1$ on Page $8$ of M.~F.~Atiyah and I.~G.~MacDonald~\cite{AM}.
\end{proof}


\begin{prop}
\label{prop:ChoiceMaxElements}
Let $\R$ be a commutative ring with unity. Suppose every non-Jacobson element is contained in only a finitely many maximal ideals in $\R$. Let $k\in \mbb{N}$ and $\mcl{I}_1,\mcl{I}_2,\ldots,\mcl{I}_k$ be co-maximal ideals in $\R$.
Then there exist elements $q_i\in \mcl{I}_i, 1\leq i\leq k$ such that $(q_i)+(q_j)=\R, 1\leq i\neq j \leq k$. 
\end{prop}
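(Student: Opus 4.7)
The plan is to prove a slightly stronger statement by induction: one can find $q_1, \ldots, q_k$ with $q_i \in \mcl{I}_i$, pairwise coprime, and additionally satisfying the Chinese Remainder congruence $q_i \equiv 1 \pmod{\mcl{I}_l}$ for every $l \neq i$. This extra congruence is the key device: if $q_i \in \mcl{M}$ for some maximal ideal $\mcl{M}$, then $\mcl{M}$ cannot contain any $\mcl{I}_l$ with $l \neq i$ (else $q_i \equiv 1 \pmod{\mcl{M}}$, contradicting $q_i \in \mcl{M}$), and this is precisely what will make ideal avoidance succeed at the next step. First I would handle the trivial case by setting $q_j = 1$ whenever $\mcl{I}_j = \R$, reducing to all $\mcl{I}_i$ proper.

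I then induct on $s$, the number of $q_i$'s already built, with the tuple $(\mcl{I}_1, \ldots, \mcl{I}_k)$ fixed throughout. The base $s = 0$ is vacuous. For the step, by CRT (valid since $\mcl{I}_{s+1}$ is co-maximal with $\prod_{l \neq s+1} \mcl{I}_l$), pick $q_{s+1}^{(0)} \in \mcl{I}_{s+1}$ with $q_{s+1}^{(0)} \equiv 1 \pmod{\mcl{I}_l}$ for $l \neq s+1$. Since at least one such $\mcl{I}_l$ is proper, $q_{s+1}^{(0)}$ is a unit modulo any maximal ideal containing $\mcl{I}_l$, hence is non-Jacobson; the hypothesis then puts it in only finitely many maximal ideals, and the same applies to each previously constructed $q_i$. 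Let $E := \{\mcl{M} \text{ maximal} : q_i \in \mcl{M} \text{ for some } i \leq s\}$, a finite set. Partition $E = E_1 \sqcup E_2$, where $E_1$ consists of those $\mcl{M} \in E$ with $\mcl{M} \supseteq \mcl{I}_i$ for some $i \leq s$ and $E_2$ is the rest. On $E_1$ the congruence $q_{s+1}^{(0)} \equiv 1 \pmod{\mcl{I}_i}$ already forces $q_{s+1}^{(0)} \notin \mcl{M}$, and any modification by an element of $\prod_l \mcl{I}_l \subseteq \mcl{I}_i \subseteq \mcl{M}$ preserves this. On $E_2$ the inductive congruences force $\mcl{M}$ to contain no $\mcl{I}_l$ at all, hence $\prod_l \mcl{I}_l \not\subseteq \mcl{M}$ since $\mcl{M}$ is prime. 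I then set $q_{s+1} = q_{s+1}^{(0)} + r$ with $r \in \prod_l \mcl{I}_l$; this preserves both $q_{s+1} \in \mcl{I}_{s+1}$ and every congruence. Since $\prod_l \mcl{I}_l$ surjects onto each field $\R/\mcl{M}$ for $\mcl{M} \in E_2$ and distinct maximal ideals are automatically pairwise co-maximal, CRT applied to the finite collection $E_2$ lets me prescribe the residue of $r$ at each $\mcl{M} \in E_2$; I choose these so that $q_{s+1}^{(0)} + r \not\equiv 0 \pmod{\mcl{M}}$, which is possible because $|\R/\mcl{M}| \geq 2$. The resulting $q_{s+1}$ avoids all of $E$, so $(q_{s+1}) + (q_i) = \R$ for every $i \leq s$, and the strengthened hypothesis propagates.

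The main obstacle I anticipate is recognizing why the naive induction without the extra congruence is insufficient. A maximal ideal $\mcl{M}$ containing some earlier $q_i$ need not contain $\mcl{I}_i$; if such an $\mcl{M}$ happened also to contain $\mcl{I}_{s+1}$, then Lemma~\ref{lemma:IdealAvoid} applied inside $\mcl{I}_{s+1}$ would be useless. The CRT strengthening rules out exactly this pathology, since it forces each $\mcl{M}$ meeting some $q_i$ either to sit in the locus $V(\mcl{I}_i)$, where co-maximality of $\mcl{I}_i$ with $\mcl{I}_{s+1}$ handles things for free, or to miss every $V(\mcl{I}_l)$ entirely, where the product $\prod_l \mcl{I}_l$ still surjects onto $\R/\mcl{M}$ and the perturbation step closes the argument.
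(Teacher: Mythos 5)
Your proposal is correct, and while it shares the paper's inductive skeleton (build the $q_i$ one at a time, using the non-Jacobson hypothesis to keep the set of maximal ideals through the previously chosen elements finite, then avoid that finite set), the mechanism you use to keep later choices compatible is genuinely different. The paper first observes that each proper $\mcl{I}_i$ contains a non-Jacobson element and hence is contained in only finitely many maximal ideals, and then at each stage invokes ideal avoidance (Lemma~\ref{lemma:IdealAvoid}) to pick $q_i\in\mcl{I}_i$ outside every maximal ideal containing any other $\mcl{I}_l$ as well as every maximal ideal containing an earlier $q_j$; the legitimacy of the next avoidance step rests precisely on the earlier $q_j$'s having been chosen off the maximal ideals over the remaining $\mcl{I}_l$'s. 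You instead strengthen the invariant to $q_i\equiv 1\pmod{\mcl{I}_l}$ for all $l\neq i$ (obtained by CRT from pairwise co-maximality), which makes your dichotomy $E_1\sqcup E_2$ automatic: a maximal ideal through an earlier $q_i$ either contains $\mcl{I}_i$, where the congruence protects the new candidate and is preserved under perturbation by $\prod_l\mcl{I}_l$, or contains no $\mcl{I}_l$ at all, where $\prod_l\mcl{I}_l$ surjects onto the residue field and the perturbation can be prescribed. What your route buys: you never need the finiteness of the set of maximal ideals containing each $\mcl{I}_i$ (the paper's opening observation), only the finiteness attached to the chosen non-Jacobson elements, and your CRT-surjectivity of an ideal onto residue fields of maximal ideals not containing it plays the role of Lemma~\ref{lemma:IdealAvoid}. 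What the paper's route buys is that it runs on bare prime avoidance with no strengthened induction hypothesis. One negligible wrinkle: your sentence that ``at least one such $\mcl{I}_l$ is proper'' fails when, after discarding unit ideals, only one proper ideal remains (or $k=1$), but in that degenerate case the conclusion is trivial since there is nothing to avoid, so no harm is done.
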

\begin{proof}
If $k=1$ the conclusion is vacuously true and for $k=2$ the conclusion holds because of co-maximality. If $\mcl{I}_i=\R$ for some $1\leq i\leq k$ then we can choose for that value $i, q_i=1\in \R=\mcl{I}_i$ trivially.
So we assume without loss of generality that none of the ideals is a unit ideal and $k>2$. Hence by mutual co-maximality, since $k>2$, there exists a non-Jacobson element in each of the ideals $\mcl{I}_i,1\leq i\leq k$.
So each $\mcl{I}_i, 1\leq i\leq k$ is contained in only a finitely many maximal ideals. Let $\mcl{I}_i$ be contained in the maximal ideals $\mcl{M}_{i1},\mcl{M}_{i2},\ldots,\mcl{M}_{ir_i}$ with $0<r_i\in \mbb{N}$.
By using Lemma~\ref{lemma:IdealAvoid} let \equ{q_1\in \mcl{I}_1\bs \bigg(\us{i=2}{\os{k}{\bigcup}}\us{j=1}{\os{r_i}{\bigcup}}\mcl{M}_{ij}\bigg) \neq \es.} The element $q_1$ is non-Jacobson. Hence there 
exists a finitely many maximal ideals $\mcl{N}_1,\mcl{N}_2,\ldots,\mcl{N}_r$ containing $q_1$ by hypothesis. We observe that $\mcl{I}_2\nsbq \mcl{N}_i,1\leq i\leq r$.
So again using Lemma~\ref{lemma:IdealAvoid} let \equ{q_2\in \mcl{I}_2\bs \bigg(\us{j=1}{\os{r}{\bigcup}}\mcl{N}_j\bigg) \bs \bigg(\us{i=1,i\neq 2}{\os{k}{\bigcup}}\us{j=1}{\os{r_i}{\bigcup}}\mcl{M}_{ij}\bigg) \neq \es.} 
The element $q_2$ is non-Jacobson. There exists a finitely many maximal ideals  $\mcl{P}_1,\mcl{P}_2,\ldots,\mcl{P}_s$ containing $q_2$ by hypothesis. We observe that 
$\mcl{I}_3\nsbq \mcl{N}_i,1\leq i\leq r$ and $\mcl{I}_3 \nsbq \mcl{P}_i,1\leq i\leq s$. Again using Lemma~\ref{lemma:IdealAvoid} let 
\equ{q_3\in \mcl{I}_3\bs \bigg(\us{l=1}{\os{s}{\bigcup}}\mcl{P}_l\bigg) \bs \bigg(\us{j=1}{\os{r}{\bigcup}}\mcl{N}_j\bigg) \bs \bigg(\us{i=1,i\neq 3}{\os{k}{\bigcup}}\us{j=1}{\os{r_i}{\bigcup}}\mcl{M}_{ij}\bigg) \neq \es.}
The element $q_3$ is non-Jacobson. Continuing this procedure we obtain elements $q_i\in \mcl{I}_i, 1\leq i\leq k$. 
Since there are no common maximal ideals containing $q_i,q_j$ for $1\leq i\neq j \leq k$ we have $(q_i)+(q_j)=\R$. This proves the proposition.
\end{proof}


\begin{prop}
\label{prop:diagdetone}
Let $\R$ be a commutative ring with unity. Suppose every non-Jacobson element is contained in only a finitely many maximal ideals in $\R$. Let $1<k\in \mbb{N}$ and $\mcl{I}_1,\mcl{I}_2,\ldots,\mcl{I}_k$ be 
mutually co-maximal ideals in $\R$. Let $a_1,a_2,\ldots,a_k\in \R$ be such that for $1\leq i\leq k, a_i$ is a unit modulo $\mcl{I}_i$ if $\mcl{I}_i\neq \R$. 
Then there exist $d_i \equiv a_i \mod \mcl{I}_i, 1\leq i\leq k$ such that 
\equ{d_1d_2\ldots d_k \equiv 1 \mod \mcl{I}_1\mcl{I}_2\ldots \mcl{I}_k.} 
\end{prop}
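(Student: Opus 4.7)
The plan is to construct each $d_i$ separately via the Chinese Remainder Theorem, which applies because the ideals $\mcl{I}_1,\ldots,\mcl{I}_k$ are mutually co-maximal and hence $\R/(\mcl{I}_1\cdots\mcl{I}_k)\cong \prod_{j=1}^{k}\R/\mcl{I}_j$. I would prescribe each $d_i$'s residue modulo every $\mcl{I}_j$, choosing the residues so that $d_1\cdots d_k$ reduces to $1$ in each quotient $\R/\mcl{I}_j$; by CRT this is equivalent to $d_1\cdots d_k\equiv 1\mod\mcl{I}_1\cdots\mcl{I}_k$, which is what we want.

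First I would reduce to the case where every $\mcl{I}_i$ is a proper ideal. If $\mcl{I}_i=\R$ for some $i$, the congruence $d_i\equiv a_i\mod\mcl{I}_i$ is vacuous and $\mcl{I}_i$ contributes trivially to the product ideal, so any choice of $d_i$ is acceptable at that index (and may be used to absorb a correction factor). Hence we may assume every $\mcl{I}_i$ is proper, so each $a_i$ is a unit modulo $\mcl{I}_i$ and admits a well-defined inverse $a_i^{-1}\mod\mcl{I}_i$.

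Next, using CRT on the pairwise co-maximal family $\{\mcl{I}_j\}$, I would choose $d_i\in\R$ with the following residues: take $d_1$ satisfying $d_1\equiv a_1\mod\mcl{I}_1$, $d_1\equiv a_k^{-1}\mod\mcl{I}_k$, and $d_1\equiv 1\mod\mcl{I}_j$ for $1<j<k$; for each $2\le i\le k-1$, take $d_i$ with $d_i\equiv a_i\mod\mcl{I}_i$ and $d_i\equiv 1\mod\mcl{I}_j$ for $j\neq i$; finally, take $d_k$ with $d_k\equiv a_k\mod\mcl{I}_k$ and $d_k\equiv a_j^{-1}\mod\mcl{I}_j$ for $1\le j\le k-1$. (When $k=2$ the middle block is empty and the scheme degenerates to $d_1\equiv a_1\mod\mcl{I}_1$, $d_1\equiv a_2^{-1}\mod\mcl{I}_2$, $d_2\equiv a_1^{-1}\mod\mcl{I}_1$, $d_2\equiv a_2\mod\mcl{I}_2$.)

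To finish I would verify that $d_1d_2\cdots d_k\equiv 1\mod\mcl{I}_j$ for every $j$: modulo $\mcl{I}_1$ the product reduces to $a_1\cdot 1\cdots 1\cdot a_1^{-1}=1$; modulo $\mcl{I}_j$ for $2\le j\le k-1$ it reduces to $1\cdots a_j\cdots 1\cdot a_j^{-1}=1$; and modulo $\mcl{I}_k$ it reduces to $a_k^{-1}\cdot 1\cdots 1\cdot a_k=1$. By CRT the product is then $\equiv 1\mod\mcl{I}_1\mcl{I}_2\cdots\mcl{I}_k$, completing the argument. The main subtlety is just devising the bookkeeping scheme that distributes the inverse factors correctly across the $d_i$; the finite-containment hypothesis on non-Jacobson elements (used via Proposition~\ref{prop:ChoiceMaxElements}) does not appear to be needed for this particular proposition, although it is the standing assumption of the section.
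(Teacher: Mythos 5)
Your argument is correct, and it reaches the conclusion by a genuinely different and much shorter route than the paper. You use only the classical Chinese Remainder Theorem for pairwise co-maximal ideals: prescribing the residue of each $d_i$ modulo every $\mcl{I}_j$, with the inverse factors distributed so that $d_k$ carries $a_j^{-1}$ modulo $\mcl{I}_j$ for $j<k$ and $d_1$ carries $a_k^{-1}$ modulo $\mcl{I}_k$, makes the product congruent to $1$ in each quotient $\R/\mcl{I}_j$, and since $\bigcap_j \mcl{I}_j=\prod_j\mcl{I}_j$ for mutually co-maximal ideals this yields $d_1d_2\ldots d_k\equiv 1 \mod \mcl{I}_1\mcl{I}_2\ldots\mcl{I}_k$; your three residue checks are right, and the unit-ideal cases are harmless (the scheme even runs verbatim if congruences modulo $\R$ are read as vacuous). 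The paper's own proof is quite different and heavier: it produces the $d_i$ as perturbations of the $a_i$ inside $a_i+\mcl{I}_i$, using Proposition~\ref{prop:ChoiceMaxElements} to choose pairwise co-maximal elements $q_i\in\mcl{I}_i$, Proposition~\ref{prop:FundLemmaRings} to force intermediate elements $\ti{d}_i$ to avoid all relevant maximal ideals, and then a correction step in which a unital combination of auxiliary products $e_i$ is solved so that $\prod_i(\ti{d}_i+q_i\gga_i)\equiv 1$ modulo $\prod_i\mcl{I}_i$. You are also right that the hypothesis on non-Jacobson elements is not needed for this particular proposition: the paper's proof invokes it only through Proposition~\ref{prop:ChoiceMaxElements}, whereas your CRT construction works in any commutative ring with unity for any mutually co-maximal ideals, so your proof is strictly more general; the hypothesis is still genuinely used elsewhere in that section (for instance in Proposition~\ref{prop:ChoiceMaxElements} itself and in Step (A) of Theorem~\ref{theorem:GenSurjMain}), so the standing assumption of the main theorems is not thereby removed.
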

\begin{proof}
If all $\mcl{I}_i$ are unit ideals then we choose $d_i=1,1\leq i\leq k$. If one of them (say) $\mcl{I}_1\neq\R$ and $\mcl{I}_2=\mcl{I}_3=\ldots =\mcl{I}_k=\R$ then, let $z_1\in \R$ be such that $z_1a_1\equiv 1\mod \mcl{I}_1$.
Choose $d_1=a_1,d_2=z_1,d_3=\ldots =d_k=1$. Now we can assume that there are at least two ideals (say) $\mcl{I}_1\neq \R\neq \mcl{I}_2$. Here we choose $d_j=1$ if $\mcl{I}_j=\R$ for some $j>2$.
Hence we ignore unit ideals and assume that none of the ideals are unit ideals and $k\geq 2$. 

We have $a_i, 1\leq i\leq k$ are all non-Jacobson as each $a_i$ is a unit modulo a proper ideal $\mcl{I}_i$ which is
contained in at least one maximal ideal. So for any $1\leq i\leq k$, if $b_i \equiv a_i\mod \mcl{I}_i$ then $b_i$ is unit modulo $\mcl{I}_i$. Hence $b_i$ is non-Jacobson. 
Using Proposition~\ref{prop:ChoiceMaxElements} there exist $q_i\in \mcl{I}_i$ such that $(q_i)+(q_j)=\R, 1\leq i\neq j\leq k$. We have $q_i,1\leq i\leq k$ are non-Jacobson as each $q_i$
is a unit modulo $\mcl{I}_j$ for any $j\neq i, 1\leq j \leq k$. Since $(a_i)+\mcl{I}_i=\R$, let $\ti{q}_i\in \mcl{I}_i$ be such that 
$(a_i)+(\ti{q}_i)=\R$. Now using Proposition~\ref{prop:FundLemmaRings} let $\ti{d}_1=a_1+\ga_1\ti{q}_1\nin \mcl{N}$ for all maximal ideals $\mcl{N}$ containing any $q_i, 1\leq i\leq k$. 
Again using Proposition~\ref{prop:FundLemmaRings} we choose $\ti{d}_2=a_2+\ga_2\ti{q}_2 \nin \mcl{N}$ for all maximal ideals containing $\ti{d}_1$ or any $q_i, 1\leq i\leq k$. 
Continuing this procedure we obtain elements $\ti{d}_i\in a_i+\mcl{I}_i$ such that 
\begin{itemize}
\item There is no common maximal ideal containing $\ti{d}_i,\ti{d}_j$ for $1\leq i\neq j\leq k$.
\item There is no common maximal ideal containing $\ti{d}_i,q_j$ for $1\leq i,j\leq k$.
\item Also we have by choice there is no common maximal ideal containing $q_i,q_j$ for $1\leq i\neq j\leq k$.
\end{itemize}   

Let \equ{e_1=\frac{\us{j=1}{\os{k}{\prod}} \ti{d}_j}{\ti{d}_1}\frac{\us{j=1}{\os{k}{\prod}} q_j}{q_k}} and for $2\leq i\leq k$ let 
\equ{e_i=\frac{\us{j=1}{\os{k}{\prod}} \ti{d}_j}{\ti{d}_i}\frac{\us{j=1}{\os{k}{\prod}} q_j}{q_{i-1}}.} Then there is no maximal ideal containing all the elements $e_1,e_2,\ldots,e_k$.
Hence we have \equ{\us{i=1}{\os{k}{\sum}} (e_i)=\R.}

Let $x_1,x_2,\ldots,x_k,X_1,X_2,\ldots,X_k$ be variables such that  
\equ{x_1=\frac{\us{j=1}{\os{k}{\prod}} q_j}{q_kq_1}X_1,x_i=\frac{\us{j=1}{\os{k}{\prod}} q_j}{q_{i-1}q_i}X_i, 2\leq i\leq k.}
Consider the equation 
\equa{\us{i=1}{\os{k}{\prod}}\big(\ti{d}_i+q_ix_i\big)&=1 \Ra\\
\us{i=1}{\os{k}{\sum}}e_iX_i=1- \us{i=1}{\os{k}{\prod}}\ti{d}_i + f[X_1,X_2,\ldots,X_n]&\text{ for some }f \in \bigg(\us{i=1}{\os{k}{\prod}}\mcl{I}_i\bigg)\R[X_1,X_2,\ldots,X_n].}
We choose values $X_i=\gb_i\in \R$ (which exist) such that 
\equ{\us{i=1}{\os{k}{\sum}}e_i\gb_i=1- \us{i=1}{\os{k}{\prod}}\ti{d}_i.}
Then we obtain the product
\equ{\us{i=1}{\os{k}{\prod}}\big(\ti{d}_i+q_i\gga_i\big)\equiv 1 \mod \bigg(\us{i=1}{\os{k}{\prod}}\mcl{I}_i\bigg)} where 
\equ{\gga_1=\frac{\us{j=1}{\os{k}{\prod}} q_j}{q_kq_1}\gb_1,\gga_i=\frac{\us{j=1}{\os{k}{\prod}} q_j}{q_{i-1}q_i}\gb_i, 2\leq i\leq k.}
Choosing $d_i=\ti{d}_i+q_i\gga_i\in a_i+\mcl{I}_i$ we have 
\equ{d_1d_2\ldots d_k\equiv 1 \mod \mcl{I}_1\mcl{I}_2\ldots\mcl{I}_k} and the proposition follows. 
\end{proof}


In the following important and very useful Theorem~\ref{theorem:GenSurjMain} we consider ideals whose radicals are distinct maximal ideals, instead of just co-maximal ideals, 
each of which is contained in only a finitely many maximal ideals. Hence we use the notation $\mcl{Q}_i,0\leq i\leq k$ instead of notation $\mcl{I}_i,0\leq i\leq k$.
This theorem and its proof motivates the statement of Theorem~\ref{theorem:GenSurjMainGenIdeals} and its proof. Theorem~\ref{theorem:GenSurjMain} is stated as follows
where a condition on the ring is required:
\begin{theorem}
\label{theorem:GenSurjMain}
Let $\R$ be a commutative ring with unity. Suppose every non-Jacobson element is contained in only a finitely many maximal ideals.
Let $k\in \mbb{N}$ and $\mcl{Q}_0,\mcl{Q}_1,\ldots,\mcl{Q}_k$ be $(k+1)$ co-maximal ideals in $\R$ whose radicals rad$(\mcl{Q}_i)=\mcl{M}_i$ are distinct maximal ideals. 
Let $A_{(k+1)\times (k+1)}=[a_{i,j}]_{0\leq i,j \leq k}\in M_{(k+1)\times (k+1)}(\R)$ such that for every $0\leq i\leq k$ the $i^{th}\operatorname{-}$row is unital, that is, $\us{j=0}{\os{k}{\sum}} (a_{i,j})=\R$
for $0\leq i\leq k$. Then there exists $B=[a_{i,j}]_{0\leq i,j \leq k}\in SL_{k+1}(\R)$ such that we have 
$a_{i,j}\equiv b_{i,j}\mod \mcl{Q}_i,0\leq i,j\leq k$.
\end{theorem}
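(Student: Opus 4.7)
My plan is to reduce the problem modulo the product $\mcl{Q}=\mcl{Q}_0\mcl{Q}_1\cdots\mcl{Q}_k$, construct the required matrix over the quotient by exploiting the local structure that the radical hypothesis furnishes on each factor $\R/\mcl{Q}_i$, and then pull back along the surjection given by Theorem~\ref{theorem:SurjModIdeal}. Concretely, I aim to produce some $\bar{B}\in SL_{k+1}(\R/\mcl{Q})$ whose $i^{th}$ row reduces modulo $\mcl{Q}_i$ to the $i^{th}$ row of $A$; any $B\in SL_{k+1}(\R)$ lifting such a $\bar{B}$ automatically satisfies $b_{i,j}\equiv a_{i,j}\mod\mcl{Q}_i$ for all $0\leq i,j\leq k$.

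To arrange the pull-back, I first observe that the maximal ideals of $\R$ containing $\mcl{Q}$ are precisely the $\mcl{M}_i$: any such maximal ideal must contain some factor $\mcl{Q}_i$, hence its radical $\mcl{M}_i$, and therefore equal $\mcl{M}_i$ by maximality. Thus $\mcl{Q}$ is contained in only finitely many maximal ideals, Proposition~\ref{prop:Unital} yields that $\mcl{Q}$ satisfies the unital set condition, and Theorem~\ref{theorem:SurjModIdeal} then guarantees that the reduction $SL_{k+1}(\R)\to SL_{k+1}(\R/\mcl{Q})$ is surjective. Next, mutual co-maximality of the $\mcl{Q}_i$ together with the classical Chinese Remainder Theorem yields $\R/\mcl{Q}\cong\prod_{i=0}^{k}\R/\mcl{Q}_i$, so the search for $\bar{B}$ factors as a search for matrices $\bar{B}_i\in SL_{k+1}(\R/\mcl{Q}_i)$, one for each $i$. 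Finally, since rad$(\mcl{Q}_i)=\mcl{M}_i$ is maximal, every prime of $\R/\mcl{Q}_i$ contains the nilradical $\mcl{M}_i/\mcl{Q}_i$, which is itself maximal, so $\R/\mcl{Q}_i$ is a local ring with unique maximal ideal $\mcl{M}_i/\mcl{Q}_i$.

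It remains to build each $\bar{B}_i$ with $i^{th}$ row equal to the reduction of the $i^{th}$ row of $A$ modulo $\mcl{Q}_i$. This row is unital in $\R$, hence unital in $\R/\mcl{Q}_i$, and in a local ring a unital tuple must contain a unit (otherwise every entry would lie in the maximal ideal and so would their sum, contradicting unitality); pick an index $l_i$ with $\bar{a}_{i,l_i}$ a unit. I would then place this row in row $i$ and complete with a permutation-like sparse matrix carrying a single nonzero entry in each remaining row and each column outside column $l_i$; the determinant is $\pm \bar{a}_{i,l_i}$, which is a unit, and rescaling a single auxiliary entry by the appropriate sign times $\bar{a}_{i,l_i}^{-1}$ normalizes the determinant to $1$. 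Patching the $\bar{B}_i$ via the CRT isomorphism produces $\bar{B}$, and lifting by Theorem~\ref{theorem:SurjModIdeal} finishes the argument. The essential ingredient is this local row-completion: Example~\ref{Example:NotSurj} shows that over a general ring a unital row need not extend to an $SL$ matrix, and the radical hypothesis of the theorem is used precisely to reduce the completion question to the local case, where it is immediate.
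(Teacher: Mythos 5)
Your proof is correct, but it takes a genuinely different route from the paper's. The paper never passes through the CRT decomposition of $\R/\mcl{Q}$: it transforms $A$ over $\R$ by explicit right multiplication by $SL_{k+1}(\R)$ matrices (Steps (A)--(D), resting on Propositions~\ref{prop:ChoiceMaxElements} and~\ref{prop:FundLemmaRings}) until the $i^{th}$ row is, modulo $\mcl{Q}_i$, a unit on the diagonal and an element of $\mcl{Q}_i$ elsewhere; it then invokes Proposition~\ref{prop:diagdetone} to replace the diagonal entries by $d_i\equiv a_{i,i}\bmod \mcl{Q}_i$ with $d_0d_1\cdots d_k\equiv 1\bmod \mcl{Q}_0\mcl{Q}_1\cdots\mcl{Q}_k$, so that the reduced matrix lands in $SL_{k+1}(\R/\mcl{Q}_0\cdots\mcl{Q}_k)$, and only then lifts by Theorem~\ref{theorem:SurjModIdeal} exactly as you do. Both of those ingredients (Propositions~\ref{prop:ChoiceMaxElements} and~\ref{prop:diagdetone}) consume the standing hypothesis that every non-Jacobson element lies in only finitely many maximal ideals. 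Your argument replaces all of this by the observation that each $\R/\mcl{Q}_i$ is local (this is precisely where rad$(\mcl{Q}_i)=\mcl{M}_i$ enters), so the prescribed unital $i^{th}$ row, necessarily containing a unit, is completed to an element of $SL_{k+1}(\R/\mcl{Q}_i)$ by a rescaled permutation pattern; gluing the components through the CRT isomorphism and lifting via Proposition~\ref{prop:Unital} and Theorem~\ref{theorem:SurjModIdeal} then finishes, and your justification that the only maximal ideals over the product are the $\mcl{M}_i$ is sound. What your route buys is economy and a sharper statement: the non-Jacobson hypothesis on the ring is never used, so you have in fact proved this theorem without it. What the paper's route buys is robustness: its column-operation machinery and Proposition~\ref{prop:diagdetone} are exactly what survive in Theorem~\ref{theorem:GenSurjMainGenIdeals}, where the radicals are no longer maximal, the quotients $\R/\mcl{I}_i$ are only semilocal rather than local (Proposition~\ref{prop:bringunit} there takes the place of your locality argument), and the ring hypothesis genuinely intervenes; your CRT-plus-local-completion argument does not transfer verbatim to that more general setting.
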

\begin{proof}
First we make the following observations.
\begin{enumerate}
\item The $i^{th}\operatorname{-}$row of $A_{(k+1)\times (k+1)}$ is unital if and only if the $i^{th}\operatorname{-}$row of $A_{(k+1)\times (k+1)}.C$ is for any 
$C\in SL_{k+1}(\R)$.
\item The conclusion holds for the matrix $A_{(k+1)\times (k+1)}$ if and only if the conclusion holds for the matrix $A_{(k+1)\times (k+1)}.C$ for any $C\in SL_{k+1}(\R)$.
\item For any $0\leq i\leq k$, we can replace the $i^{th}\operatorname{-}$row $[a_{i,0}\ \ldots \ a_{i,k}]$ of $A_{(k+1)\times (k+1)}$ by another unital row $[\ti{a}_{i,0}\ \ldots \ \ti{a}_{i,k}]$ 
such that $a_{i,j}\equiv \ti{a}_{i,j}\mod \mcl{Q}_i, 0\leq j\leq k$.  
\end{enumerate}
We prove this theorem in several steps with the central idea being to transform $A_{(k+1)\times (k+1)}$ to another matrix for which the conclusion of the theorem holds.
\begin{enumerate}[label=Step(\Alph*):]
\item By applying Proposition~\ref{prop:ChoiceMaxElements} we have that there exist $q_i\in \mcl{Q}_i,0\leq i\leq k$ such that $(q_i)+(q_j)=\R$ for $0\leq i\neq j\leq k$.
\item There exists $0\leq j\leq k$ such that $a_{0,j}\nin rad(\mcl{Q}_0)=\mcl{M}_0$. If $a_{0,0}\in \mcl{M}_0$ and $a_{0,j_0}\nin \mcl{M}_0$ for some $j_0\neq 0$ then we add the $j_0^{th}\operatorname{-}$column to the 
$0^{th}\operatorname{-}$column to obtain an element, also denoted by $a_{0,0}$, such that $a_{0,0}\nin \mcl{M}_0$ and hence a unit modulo $\mcl{Q}_0$. Let $z_{0,0}\in \R$ such that $z_{0,0}a_{0,0} \equiv 1 \mod \mcl{Q}_0$.
Now we add $-z_{0,0}a_{0,j}$ times the $0^{th}\operatorname{-}$column to the $j^{th}\operatorname{-}$column for $1\leq j\leq k$. The $0^{th}\operatorname{-}$row becomes $[a_{0,0}\ \ldots \ a_{0,k}]$ with the following
properties.
\begin{itemize}
\item $a_{0,0} \nin \mcl{M}_0$ and hence a unit modulo $\mcl{Q}_0$.
\item $a_{0,j}\in \mcl{Q}_0$ for $1\leq j\leq k$.
\item $\us{j=0}{\os{k}{\sum}}(a_{0,j})=\R$, that is the $0^{th}\operatorname{-}$row is a unital vector.
\end{itemize}
In the subsequent steps we preserve these properties of the $0^{th}\operatorname{-}$row.
\item We inductively consider the $i^{th}\operatorname{-}$row of $A_{(k+1)\times (k+1)}$.
Here if $a_{i,i}\in \mcl{M}_i=$rad$(\mcl{Q}_i)$ then there exists $0\leq j_i\leq k,j_i\neq i$ such that $a_{i,j_i}\nin \mcl{M}_i$ and hence a unit modulo 
$\mcl{Q}_i$. If $0\leq j_i<i$ then we add $q_0q_1q_2\ldots q_{i-1}$ times $j_i^{th}\operatorname{-}$column to $i^{th}\operatorname{-}$column. We note that 
\begin{itemize}
\item $q_1q_2\ldots q_{i-1}a_{i,j_i}\nin \mcl{M}_i$.
\item $q_0q_1q_2\ldots q_{i-1}a_{l,j_i}\in \mcl{Q}_l$ for any $0\leq l\leq i-1$.
\end{itemize}
If $i<j_i\leq k$ then we just add the $j_i^{th}\operatorname{-}$column to the $i^{th}\operatorname{-}$column.
Hence after adding the column we obtain an element, also denoted by $a_{i,i}$, the diagonal entry in the $i^{th}\operatorname{-}$row such that $a_{i,i}\nin \mcl{M}_0$ and hence a unit modulo $\mcl{Q}_i$.
Let $z_{i,i}\in \R$ be such that $z_{i,i}a_{i,i} \equiv 1 \mod \mcl{Q}_i$. Add $-z_{i,i}a_{i,j}$ times $i^{th}\operatorname{-}$column to the $j^{th}\operatorname{-}$column for $0\leq j\leq k, j\neq i$.
Now we have the following properties for the matrix $A_{(k+1)\times (k+1)}$.
\begin{itemize}
\item $a_{l,l}\nin \mcl{M}_l$ for $0\leq l\leq i$.
\item $a_{l,j}\in \mcl{Q}_j$ for $j\neq l, 0\leq j\leq k, 0\leq l\leq i$
\item All rows of $A_{(k+1)\times (k+1)}$ are unital. 
\end{itemize}
\item We continue this procedure till the last $k^{th}\operatorname{-}$row.
After this procedure we have the following properties for $A_{(k+1)\times (k+1)}$.
\begin{itemize}
\item The diagonal entry $a_{i,i}\nin \mcl{M}_i$ and hence a unit modulo $\mcl{Q}_i$ for $0\leq i\leq k$.
\item The non-diagonal entry $a_{i,j}\in \mcl{Q}_i$ for $0\leq i\neq j\leq k$.
\item All rows of $A_{(k+1)\times (k+1)}$ are unital. 
\end{itemize}
\item Consider only the diagonal part of the matrix $A_{(k+1)\times (k+1)}$, that is, the matrix $D_{(k+1)\times (k+1)}=diag(a_{0,0},a_{1,1},\ldots,a_{k,k})$. 
We use Proposition~\ref{prop:diagdetone} to change the diagonal matrix to $D_{(k+1)\times (k+1)}$ to $\ti{D}_{(k+1)\times (k+1)}=diag(d_0,d_1,\ldots,d_k)$ such that we have 
$d_i\equiv a_{i,i}\mod \mcl{Q}_i,0\leq i\leq k$ and $d_0d_1\ldots d_k\equiv 1\mod \mcl{Q}_0\mcl{Q}_1\ldots\mcl{Q}_k$. Hence 
\equ{\ol{\ti{D}}_{(k+1)\times (k+1)} \in SL_{k+1}\bigg(\frac{\R}{\mcl{Q}_0\mcl{Q}_1\ldots \mcl{Q}_k}\bigg).}
We observe by using Proposition~\ref{prop:Unital}, that, $\mcl{Q}_0\mcl{Q}_1\ldots \mcl{Q}_k$ satisfies 
unital set condition as it is contained in only a finitely many maximal ideals. Hence by an application of Theorem~\ref{theorem:SurjModIdeal} we conclude that the reduction map 
\equ{SL_{k+1}(\R) \lra SL_{k+1}\bigg(\frac{\R}{\mcl{Q}_0\mcl{Q}_1\ldots \mcl{Q}_k}\bigg)} is surjective. Therefore there exists a matrix $B=[b_{i,j}]_{0\leq i,j\leq k}\in SL_{k+1}(\R)$ such that we have 
\equ{\ol{B}=\ol{\ti{D}}_{(k+1)\times (k+1)} \in SL_{k+1}\bigg(\frac{\R}{\mcl{Q}_0\mcl{Q}_1\ldots \mcl{Q}_k}\bigg).}
\item This matrix $B$ is a required matrix. We observe the following. 
\begin{itemize}
\item The diagonal entries $b_{i,i}\equiv d_i\equiv a_{i,i} \mod \mcl{Q}_i$.
\item The non-diagonal entries $b_{i,j}\in \mcl{Q}_i$ and hence $b_{i,j}\equiv a_{i,j}$ for $0\leq i\neq j\leq k$.
\end{itemize}
\end{enumerate}
We have completed the proof of the theorem in Steps $(A)-(F)$.
\end{proof}
We generalize Theorem~\ref{theorem:GenSurjMain} and prove Theorem~\ref{theorem:GenSurjMainGenIdeals} for co-maximal ideals $\mcl{I}_i,0\leq i\leq k$ each of which is contained in only a finitely many maximal ideals instead 
of ideals $\mcl{Q}_i,0\leq i\leq k$ whose radicals are distinct maximal ideals. But first we state a useful proposition.


\begin{prop}
\label{prop:bringunit}
Let $\R$ be a commutative ring with unity. Let $k\in \mbb{N}\cup\{0\}$ and $(a_0,a_1,\ldots,a_k)\linebreak \in \R^{k+1}$ be a unital vector that is $\us{i=0}{\os{k}{\sum}}(a_i)=\R$. 
Let $\mcl{I} \sbnq \R, \mcl{J} \subs \R$ be pairwise co-maximal ideals and $\mcl{I}$ is contained in only a  finitely many ideals. For any subscript $0\leq i\leq k$ there exist 
\equ{x_0,x_1,\ldots, x_{i-1}\in \mcl{J} \text{ and }x_{i+1},\ldots,x_k\in \R} such that the element 
\equ{\us{j=0}{\os{i-1}{\sum}}x_ja_j+a_i+\us{j=i+1}{\os{k}{\sum}}x_ja_j}
is a unit modulo the ideal $\mcl{I}$. 
\end{prop}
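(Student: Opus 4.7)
The strategy combines Lemma~\ref{lemma:Unital} with a single multiplier $\alpha \in \mcl{J}$ chosen so that it lies in $\mcl{J}$ and is congruent to $1$ modulo $\mcl{I}$. First I would dispose of the trivial case $k = 0$: here $i = 0$, the two sums are empty, and $(a_0) = \R$ already forces $a_0$ to be a unit in $\R$, hence a unit modulo $\mcl{I}$. For $k \geq 1$, let $\mcl{M}_1, \ldots, \mcl{M}_t$ be the (finitely many, by hypothesis) maximal ideals containing $\mcl{I}$. An element of $\R$ is a unit modulo $\mcl{I}$ if and only if it lies outside every $\mcl{M}_r$, so the problem reduces to producing coefficients $x_j$ with the stated support conditions so that the target expression avoids each $\mcl{M}_r$.

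Since $\mcl{I}$ and $\mcl{J}$ are co-maximal, I may write $1 = \alpha + \iota$ with $\alpha \in \mcl{J}$ and $\iota \in \mcl{I}$; in particular $\alpha \equiv 1 \pmod{\mcl{I}}$, and since $\mcl{I} \subseteq \mcl{M}_r$ we also have $\alpha \equiv 1 \pmod{\mcl{M}_r}$ for every $r$. Applying Lemma~\ref{lemma:Unital} to the unital set $\{a_0, a_1, \ldots, a_k\}$, with $a_i$ playing the role of the distinguished element and $E = \{\mcl{M}_1, \ldots, \mcl{M}_t\}$, yields $y_0, \ldots, y_{i-1}, y_{i+1}, \ldots, y_k \in \R$ such that
\[
a_i + \sum_{j \neq i} y_j a_j \notin \mcl{M}_r \quad \text{for every } 1 \leq r \leq t.
\]

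Now I would set $x_j := \alpha y_j$ for $j < i$, which lies in $\mcl{J}$ because $\alpha \in \mcl{J}$, and $x_j := y_j$ for $j > i$. Using $\alpha \equiv 1 \pmod{\mcl{M}_r}$ yields
\[
\sum_{j=0}^{i-1} x_j a_j + a_i + \sum_{j=i+1}^{k} x_j a_j \equiv a_i + \sum_{j \neq i} y_j a_j \pmod{\mcl{M}_r},
\]
and the right-hand side is outside $\mcl{M}_r$ by the previous step. Hence the left-hand side avoids every $\mcl{M}_r$ and is therefore a unit modulo $\mcl{I}$.

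The only conceptual move is the introduction of the multiplier $\alpha$: Lemma~\ref{lemma:Unital} naturally delivers coefficients in $\R$, and scaling the $j < i$ coefficients by $\alpha$ pushes them into $\mcl{J}$ without disturbing the residue class modulo any $\mcl{M}_r \supseteq \mcl{I}$. Beyond this observation the argument is essentially a direct application of Lemma~\ref{lemma:Unital}, and I do not anticipate any substantive obstacle.
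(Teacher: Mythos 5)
Your proof is correct. It rests on the same essential trick as the paper's: choose $\alpha=q_2$ with $1=q_1+q_2$, $q_1\in\mcl{I}$, $q_2\in\mcl{J}$, so that $\alpha\equiv 1$ modulo every maximal ideal containing $\mcl{I}$, and use this to push the coefficients with index $j<i$ into $\mcl{J}$ without disturbing anything modulo those maximal ideals. Where you genuinely differ is in how the avoidance element is produced. The paper does not invoke Lemma~\ref{lemma:Unital} at this point: it stratifies the finitely many maximal ideals $\mcl{M}_1,\ldots,\mcl{M}_n\supseteq\mcl{I}$ according to which coordinate $a_{l_j}$ first escapes them, and then hand-picks elements $y_{l_j}$ lying in prescribed intersections of the $\mcl{M}_r$ while avoiding others, so that modulo each $\mcl{M}_r$ exactly one term of the sum survives and is a nonzero product (using $q_2\equiv 1$ there, just as you do). Your route --- apply Lemma~\ref{lemma:Unital} to the unital set $\{a_0,\ldots,a_k\}$ with distinguished element $a_i$ and $E$ the set of maximal ideals containing $\mcl{I}$, write the resulting element of the ideal $(a_j: j\neq i)$ as $\sum_{j\neq i}y_ja_j$, then rescale the $j<i$ coefficients by $\alpha$ --- is shorter and more modular, since Lemma~\ref{lemma:Unital} (via Proposition~\ref{prop:FundLemmaRings}) already packages the CRT/avoidance work that the paper redoes explicitly; what the paper's construction buys in exchange is an explicit description of which coordinates carry nonzero coefficients and where those coefficients live relative to the $\mcl{M}_r$, but that extra information is not needed for the statement. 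You also correctly isolated the two small points that make the reduction airtight: the case $k=0$ (where Lemma~\ref{lemma:Unital} is unavailable but $a_0$ is already a unit) and the equivalence that an element is a unit modulo $\mcl{I}$ exactly when it lies outside every maximal ideal containing $\mcl{I}$.
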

\begin{proof}
Let $q_1\in \mcl{I},q_2\in \mcl{J}$ be such that $q_1+q_2=1$. Let $\mcl{I}$ be contained in distinct maximal ideals $\mcl{M}_1,\mcl{M}_2,\ldots,\mcl{M}_n$.
By renumbering if necessary let 
\begin{itemize}
\item $a_i\nin \mcl{M}_1\cup\ldots\cup\mcl{M}_{r_1}$,
\item $a_i\in \mcl{M}_{r_1+1}\cap \ldots \cap \mcl{M}_n$.
\end{itemize}
Let $l_0=i,r_0=0$. There exists a subscript $l_1$ such that $a_{l_1}\nin \mcl{M}_{r_1+1}$. Again by renumbering the subscripts $r_1+2,r_1+3,\ldots,n$ we assume that 
\begin{itemize}
\item $a_{l_1}\nin \mcl{M}_{r_1+1}\cup\ldots\cup\mcl{M}_{r_2}$,
\item $a_{l_1}\in \mcl{M}_{r_2+1}\cap \ldots \cap \mcl{M}_n$.
\end{itemize}
We continue this procedure finitely many times to obtain distinct subscripts $l_0,l_1,l_2,\ldots,l_t$  and subscripts $0=r_0<1\leq r_1<r_2<\ldots<r_{t+1}=n$ with the property that 
\begin{itemize}
\item $a_{l_{j-1}} \nin \mcl{M}_{r_{j-1}+1} \cup \ldots \cup \mcl{M}_{r_j}$ for $1\leq j\leq t$,
\item $a_{l_{j-1}} \in \mcl{M}_{r_j+1}\cap \ldots \cap \mcl{M}_n$ for $1\leq j\leq t$.
\item $a_{l_t} \nin \mcl{M}_{r_t+1}\cup \ldots \cup \mcl{M}_{r_{t+1}=n}$.
\end{itemize}
For $1\leq j\leq t$ let \equ{y_{l_j}\in \bigg(\us{i=1}{\os{r_j}{\bigcap}}\mcl{M}_i\bigg)\bigcap \bigg(\us{i=r_{j+1}+1}{\os{n}{\bigcap}}\mcl{M}_i\bigg)\bs \bigg(\us{i=r_j+1}{\os{r_{j+1}}{\bigcup}}\mcl{M}_i\bigg) \neq \es.}
Consider the element 
\equ{a=\us{j=1,l_j<l_0=i}{\os{t}{\sum}}q_2y_{l_j}a_{l_j}+a_i+\us{j=1,l_j>l_0=i}{\os{t}{\sum}}y_{l_j}a_{l_j}.}
This element $a$ does not belong to any of the maximal ideals $\mcl{M}_1,\ldots,\mcl{M}_n$. Hence it is unit modulo $\mcl{I}$. Now we take 
\begin{itemize}
\item $x_{l_j} = q_2y_{l_j}\in \mcl{J}$ if $l_j<l_0=i, 1\leq j\leq t$.
\item $x_{l_j} = y_{l_j}$ if $l_j>l_0=i,1\leq j\leq t$.
\item $x_l=0$ if $l\nin \{l_0,l_1,\ldots,l_t\},0\leq l\leq k$. 
\end{itemize}
This completes the proof of the proposition.
\end{proof}


Now we prove Theorem~\ref{theorem:GenSurjMainGenIdeals}.
\begin{proof}
First we make the same observations that we made earlier in the proof of Theorem~\ref{theorem:GenSurjMain}.
\begin{enumerate}
\item The $i^{th}\operatorname{-}$row of $A_{(k+1)\times (k+1)}$ is unital if and only if the $i^{th}\operatorname{-}$row of $A_{(k+1)\times (k+1)}.C$ is for any 
$C\in SL_{k+1}(\R)$.
\item The conclusion holds for the matrix $A_{(k+1)\times (k+1)}$ if and only if the conclusion holds for the matrix $A_{(k+1)\times (k+1)}.C$ for any $C\in SL_{k+1}(\R)$.
\item For any $0\leq i\leq k$, we can replace the $i^{th}\operatorname{-}$row $[a_{i,0}\ \ldots \ a_{i,k}]$ of $A_{(k+1)\times (k+1)}$ by another unital row $[\ti{a}_{i,0}\ \ldots \ \ti{a}_{i,k}]$ 
such that $a_{i,j}\equiv \ti{a}_{i,j}\mod \mcl{I}_i, 0\leq j\leq k$.  
\end{enumerate}
If there are at least two mutually co-maximal proper ideals then all the proper ideals among them contain a non-Jacobson element because of co-maximality. 
So each ideal $\mcl{I}_j,0\leq j\leq k$ is contained in only a finitely many maximal ideals.
Suppose there exists only one ideal say $\mcl{I}_0 \neq \R$ and $\mcl{I}_1=\mcl{I}_2=\mcl{I}_3=\ldots=\mcl{I}_k=\R$ then by hypothesis $\mcl{I}_0$ is contained in only a finitely many maximal ideals.
Hence in all cases we can assume that each ideal $\mcl{I}_j,0\leq j\leq k$ is contained in only a finitely many maximal ideals. If $\us{i=0}{\os{k}{\prod}}\mcl{I}_i=\R$ then the proof is easy. So
we also assume that $\us{i=0}{\os{k}{\prod}}\mcl{I}_i\neq \R$.

We prove this theorem in several steps with the central idea being to transform $A_{(k+1)\times (k+1)}$ to another matrix for which the conclusion of the theorem holds.
Steps$(A)-(C)$ do not require the fact that every non-Jacobson element is contained in only a finitely many maximal ideals. In Step$(D)$ in the application of Proposition~\ref{prop:diagdetone} requires this fact.

\begin{enumerate}[label=Step(\Alph*):]
\item If $\mcl{I}_0\neq \R$ then using Proposition~\ref{prop:Unital} we can make $a_{0,0}$ a unit modulo $\mcl{I}_0$ by applying an $SL_{k+1}(\R)$ transformation as $\mcl{I}_0$ is contained in only a finitely many maximal ideals.
Let $z_{0,0}\in \R$ such that $z_{0,0}a_{0,0} \equiv 1 \mod \mcl{I}_0$.
Now we add $-z_{0,0}a_{0,j}$ times the $0^{th}\operatorname{-}$column to the $j^{th}\operatorname{-}$column for $1\leq j\leq k$. The $0^{th}\operatorname{-}$row becomes $[a_{0,0}\ \ldots \ a_{0,k}]$ with the following
properties.
\begin{itemize}
\item $a_{0,0}$ is a unit modulo $\mcl{I}_0$ if $\mcl{I}_0\neq \R$.
\item $a_{0,j}\in \mcl{I}_0$ for $1\leq j\leq k$.
\item $\us{j=0}{\os{k}{\sum}}(a_{0,j})=\R$, that is the $0^{th}\operatorname{-}$row is a unital vector.
\end{itemize}
In the subsequent steps we preserve these properties of the $0^{th}\operatorname{-}$row.
\item We inductively consider the $i^{th}\operatorname{-}$row of $A_{(k+1)\times (k+1)}$.
Here if $\mcl{I}_i\neq \R$ and $a_{i,i}$ is not a unit modulo $\mcl{I}_i$ then we use Proposition~\ref{prop:bringunit} for the subscript $i$ and for the ideals $\mcl{I}=\mcl{I}_i$ and
$\mcl{J}=\mcl{I}_0\mcl{I}_1\ldots\mcl{I}_{i-1}$ which is co-maximal with
$\mcl{I}_i$ to make $a_{i,i}$ a unit modulo $\mcl{I}_i$. In this procedure the matrix $A$ will have the following properties.
\begin{itemize}
\item $a_{l,l}$ is a unit modulo $\mcl{I}_l$ for $0\leq l\leq i$ if $\mcl{I}_l\neq \R$.
\item $a_{l,j}\in \mcl{I}_j$ for $j\neq l, 0\leq j\leq k, 0\leq l\leq i$ when Proposition~\ref{prop:bringunit} is applied approriately, (that is, for the $i^{th}\operatorname{-}$row the subscript
$i$ is chosen, the ideal $\mcl{I}=\mcl{I}_i$ is chosen and the ideal $\mcl{J}=\mcl{I}_0\mcl{I}_1\ldots\mcl{I}_{i-1}$ is chosen in Proposition~\ref{prop:bringunit}).
\item All rows of $A_{(k+1)\times (k+1)}$ are unital. 
\end{itemize}
\item We continue this procedure till the last $k^{th}\operatorname{-}$row.
After this procedure we have the following properties for $A_{(k+1)\times (k+1)}$.
\begin{itemize}
\item The diagonal entry $a_{i,i}$ is a unit modulo $\mcl{I}_i$ if $\mcl{I}_i\neq \R$ for $0\leq i\leq k$.
\item The non-diagonal entry $a_{i,j}\in \mcl{I}_i$ for $0\leq i\neq j\leq k$.
\item All rows of $A_{(k+1)\times (k+1)}$ are unital. 
\end{itemize}
\item Consider only the diagonal part of the matrix $A_{(k+1)\times (k+1)}$, that is, the matrix $D_{(k+1)\times (k+1)}=diag(a_{0,0},a_{1,1},\ldots,a_{k,k})$. 
We use Proposition~\ref{prop:diagdetone} to change the diagonal matrix to $D_{(k+1)\times (k+1)}$ to $\ti{D}_{(k+1)\times (k+1)}=diag(d_0,d_1,\ldots,d_k)$ such that we have 
$d_i\equiv a_{i,i}\mod \mcl{I}_i,0\leq i\leq k$ and $d_0d_1\ldots d_k\equiv 1\mod \mcl{I}_0\mcl{I}_1\ldots\mcl{I}_k$. The ideal $\mcl{I}_0\mcl{I}_1\ldots\mcl{I}_k \neq \R$ by assumption. Hence 
\equ{\ol{\ti{D}}_{(k+1)\times (k+1)} \in SL_{k+1}\bigg(\frac{\R}{\mcl{I}_0\mcl{I}_1\ldots \mcl{I}_k}\bigg).}
We observe by using Proposition~\ref{prop:Unital}, that, $\mcl{I}_0\mcl{I}_1\ldots \mcl{I}_k$ satisfies 
unital set condition as it is contained in only a finitely many maximal ideals. Hence by an application of Theorem~\ref{theorem:SurjModIdeal} we conclude that the reduction map 
\equ{SL_{k+1}(\R) \lra SL_{k+1}\bigg(\frac{\R}{\mcl{I}_0\mcl{I}_1\ldots \mcl{I}_k}\bigg)} is surjective. Therefore there exists a matrix $B=[b_{i,j}]_{0\leq i,j\leq k}\in SL_{k+1}(\R)$ such that we have 
\equ{\ol{B}=\ol{\ti{D}}_{(k+1)\times (k+1)} \in SL_{k+1}\bigg(\frac{\R}{\mcl{I}_0\mcl{I}_1\ldots \mcl{I}_k}\bigg).}
\item This matrix $B$ is a required matrix. We observe the following. 
\begin{itemize}
\item The diagonal entries $b_{i,i}\equiv d_i\equiv a_{i,i} \mod \mcl{I}_i$.
\item The non-diagonal entries $b_{i,j}\in \mcl{I}_i$ and hence $b_{i,j}\equiv a_{i,j}$ for $0\leq i\neq j\leq k$.
\end{itemize}
\end{enumerate}
We have completed the proof of the theorem in Steps $(A)-(E)$.
\end{proof}


Now we prove second main Theorem~\ref{theorem:FullGenSurj}.
\begin{proof}
Let \equ{\big([a_{0,0}:a_{0,1}:\ldots: a_{0,k}],[a_{1,0}:a_{1,1}:\ldots: a_{1,k}],\ldots,[a_{k,0}:a_{k,1}:\ldots: a_{k,k}]\big)\in \us{i=0}{\os{k}{\prod}}\mbb{PF}^{k,(m^i_0,m^i_1,\ldots,m^i_k)}_{\I_i}.}
Consider $A_{(k+1)\times (k+1)}=[a_{i,j}]_{0\leq i,j\leq k}\in M_{(k+1)\times (k+1)}(\R)$ for which Theorem~\ref{theorem:GenSurjMainGenIdeals} can be applied. Therefore we get 
$B=[b_{i,j}]_{0\leq i,j\leq k} \in SL_{k+1}(\R)$ such that $b_{i,j}\equiv a_{i,j} \mod \mcl{I}_i,0\leq i,j\leq k$. Hence we get 
\equ{[b_{i,0}:b_{i,1}:\ldots: b_{i,k}]=[a_{i,0}:a_{i,1}:\ldots: a_{i,k}]\in \mbb{PF}^{k,(m^i_0,m^i_1,\ldots,m^i_k)}_{\I_i},0\leq i\leq k.}
This proves second main Theorem~\ref{theorem:FullGenSurj}.
\end{proof}


\section{\bf{Generalized Projective Spaces: Revisited}}

In Theorems~\ref{theorem:GenSurjMainGenIdeals},~\ref{theorem:FullGenSurj} we have the hypothesis that every non-Jacobson element in the ring $\R$ is contained in only a finitely many maximal ideals. In this section we remove this condition on the ring with one more assumption on the generalized projective space and prove third main Theorem~\ref{theorem:FullGenSurjOne}. 


\begin{proof}
First assume $\gs$ is identity, that is, $\gs(i)=i,m^i_i=1,0\leq i\leq k$.
Let \equ{\big([a_{0,0}:a_{0,1}:\ldots: a_{0,k}],[a_{1,0}:a_{1,1}:\ldots: a_{1,k}],\ldots,[a_{k,0}:a_{k,1}:\ldots: a_{k,k}]\big)\in \us{i=0}{\os{k}{\prod}}\mbb{PF}^{k,(m^i_0,m^i_1,\ldots,m^i_k)}_{\I_i}.}
Consider its corresponding matrix $A_{(k+1)\times (k+1)}=[a_{i,j}]_{0\leq i,j\leq k}\in M_{(k+1)\times (k+1)}(\R)$.

We prove this theorem in several steps with the central idea being to transform $A_{(k+1)\times (k+1)}$ to another matrix whose corresponding element in the product of projective spaces is in the image.
We perform Steps $(A)-(C)$ in the proof of Theorem~\ref{theorem:GenSurjMainGenIdeals} here as well. To perform these three steps, we do not require the fact that every non-Jacobson element is contained in 
only a finitely many maximal ideals. Now we have the following properties for the matrix $A_{(k+1)\times (k+1)}$.
\begin{itemize}
\item The diagonal entry $a_{i,i}$ is a unit modulo $\mcl{I}_i$ if $\mcl{I}_i\neq \R$ for $0\leq i\leq k$.
\item The non-diagonal entry $a_{i,j}\in \mcl{I}_i$ for $0\leq i\neq j\leq k$.
\item All rows of $A_{(k+1)\times (k+1)}$ are unital. 
\end{itemize}
We use the fact that $m_i^i=1$ and replace the unital $i^{th}\operatorname{-}$row by $e_i=[0\ \ \ldots \ \ 0\ \ 1\ \ 0\ \ldots \ 0]$ both of which give rise to the same projective space element in 
$\mbb{PF}^{k,(m^i_0,m^i_1,\ldots,m^i_k)}_{\I_i}$. This is possible only because $m_i^i=1$. So we have reduced the matrix $A_{(k+1)\times (k+1)}$ to identity matrix whose corresponding element in the product
of projective spaces is in the image. This proves third main Theorem~\ref{theorem:FullGenSurjOne} when $\gs$ is identity. 

If $\gs$ is any even permutation then we permute the projective spaces $\mbb{PF}^{k,(m^i_0,m^i_1,\ldots,m^i_k)}_{\I_i}, 0\leq i\leq k$ so that we can assume $\gs$ is identity. Hence this case follows.

If $\gs$ is an odd permutation then we not only permute the projective spaces $\mbb{PF}^{k,(m^i_0,m^i_1,\ldots,m^i_k)}_{\I_i}, 0\leq i\leq k$ but we also use a flipping automorphism $F$ for one of the projective spaces
say \equ{F: \mbb{PF}^{k,(m^i_0,m^i_1,\ldots,m^i_k)}_{\I_0} \lra \mbb{PF}^{k,(m^i_0,m^i_1,\ldots,m^i_k)}_{\I_0}} given by 
\equ{F([a_{0,0}:a_{0,1}:\ldots: a_{0,k}])=[-a_{0,0}:-a_{0,1}:\ldots:-a_{0,k}]}
to flip the sign. With this we can assume that $\gs$ is identity. Hence this case also follows.

This completes the proof of third main Theorem~\ref{theorem:FullGenSurjOne} 
\end{proof}


\begin{remark}
Having proved Theorem~\ref{theorem:FullGenSurjOne}, we have generalized Theorem $1.8$ on Page $339$ of C.~P.~Anil~Kumar~\cite{CPAK} for ordinary projective spaces by removing the Dedekind type domain condition on the ring.
On the other hand we have similar type of conditions on the ring, though not exactly the same, in Theorem~\ref{theorem:FullGenSurj}, in the context of generalized projective spaces.
In the next section we prove a theorem in general for one dimensional projective spaces which removes the conditions on the ring and conditions on the values $m^i_j$..
\end{remark}


\section{\bf{Surjectivity of the map $SL_2(\R)\lra \mbb{PF}^{1,(m^1_0,m^1_1)}_{\I_1} \times \mbb{PF}^{1,(m^2_0,m^2_1)}_{\I_2}$ in General}}
In this section we prove surjectivity of a map in general without any conditions on the ring $\R$ in two dimensions. It only requires the fact that each of the pair of co-maximal ideals is contained in only a 
finitely many maximal ideals. The theorem is stated as follows.
\begin{theorem}
\label{theorem:FullGenSurjTwoDim}
Let $\R$ be a commutative ring with unity. Let $\mcl{I}_1,\mcl{I}_2$ be a pair of pairwise co-maximal ideals in $\R$ each of which is contained in only a finitely many maximal ideals and
$m_j^i\in \mbb{N}, i=1,2,j=0,1$.
Then the map 
\equ{SL_2(\R) \lra \mbb{PF}^{1,(m^1_0,m^1_1)}_{\I_1} \times \mbb{PF}^{1,(m^2_0,m^2_1)}_{\I_2}} given by
\equ{\mattwo xyzw \lra ([x:y],[z:w])}
is surjective.
\end{theorem}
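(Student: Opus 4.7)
The plan is to reduce the surjectivity to a direct lifting question in which the exponent tuples $m^i_j$ play no role whatsoever. Given $([a:b],[c:d]) \in \mbb{PF}^{1,(m^1_0,m^1_1)}_{\mcl{I}_1} \times \mbb{PF}^{1,(m^2_0,m^2_1)}_{\mcl{I}_2}$ with chosen unital representatives $(a,b),(c,d) \in \mcl{GCD}_2(\R)$, it suffices to exhibit $M = \mattwo xyzw \in SL_2(\R)$ with $(x,y) \equiv (a,b) \pmod{\mcl{I}_1}$ and $(z,w) \equiv (c,d) \pmod{\mcl{I}_2}$, since then both equivalences in the generalized projective spaces hold with multiplier $\gl_1 = \gl_2 = 1$. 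The degenerate cases in which $\mcl{I}_1 = \R$ or $\mcl{I}_2 = \R$ reduce to trivial or simpler versions of what follows, so we assume both ideals are proper.

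First I would extract witnesses of unitality: since $(a,b)$ and $(c,d)$ are unital, choose $p,q,r,s \in \R$ with $ap - bq = 1$ and $sd - rc = 1$. Using co-maximality of $\mcl{I}_1$ and $\mcl{I}_2$ together with the ordinary Chinese Remainder Theorem for the ring $\R$, I would then pick $x_0,y_0,z_0,w_0 \in \R$ satisfying
\equa{(x_0,y_0,z_0,w_0) &\equiv (a,b,q,p) \pmod{\mcl{I}_1},\\
(x_0,y_0,z_0,w_0) &\equiv (s,r,c,d) \pmod{\mcl{I}_2}.}
Setting $M_0 = \mattwo{x_0}{y_0}{z_0}{w_0}$, a direct computation yields $\det(M_0) \equiv ap - bq = 1 \pmod{\mcl{I}_1}$ and $\det(M_0) \equiv sd - rc = 1 \pmod{\mcl{I}_2}$. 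By co-maximality this gives $\det(M_0) \equiv 1 \pmod{\mcl{I}_1 \mcl{I}_2}$, so $\ol{M_0} \in SL_2(\R/\mcl{I}_1\mcl{I}_2)$.

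The remaining step is to lift $\ol{M_0}$ to an element of $SL_2(\R)$. Each of $\mcl{I}_1$ and $\mcl{I}_2$ is contained in only finitely many maximal ideals, hence so is $\mcl{I}_1 \mcl{I}_2$; by Proposition~\ref{prop:Unital} the product satisfies $USC$, and Theorem~\ref{theorem:SurjModIdeal} therefore produces $M \in SL_2(\R)$ with $M \equiv M_0 \pmod{\mcl{I}_1\mcl{I}_2}$. Because $\mcl{I}_1\mcl{I}_2 \subseteq \mcl{I}_1 \cap \mcl{I}_2$, the first row of $M$ is congruent to $(a,b)$ modulo $\mcl{I}_1$ and the second row to $(c,d)$ modulo $\mcl{I}_2$, as required.

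I expect no serious obstacle: the smallness of the $2\times 2$ case is precisely what lets one engineer the determinant-one condition modulo $\mcl{I}_1\mcl{I}_2$ by a direct CRT recipe from the unitality relations $ap - bq = 1$ and $sd - rc = 1$, thereby bypassing the diagonal-adjustment machinery of Proposition~\ref{prop:diagdetone} and the non-Jacobson hypothesis on $\R$ that underlies it in the higher-rank theorems. The only mild subtlety is a careful treatment of the degenerate cases in which one of the ideals equals $\R$, where some congruences become vacuous but the overall argument is unchanged.
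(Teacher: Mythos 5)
Your proposal is correct, but it takes a genuinely different route from the paper's. The paper first normalizes the matrix $\mattwo abcd$ by right multiplication with $SL_2(\R)$ matrices, using Proposition~\ref{prop:Unital} (this is where the finiteness hypothesis enters there) to make $a$ a unit modulo $\mcl{I}_1$, put the other first-row entry inside $\mcl{I}_1$, and make $c$ a unit modulo $\mcl{I}_2$, and then invokes the separate Theorem~\ref{theorem:FullGenSurjMainTwoDim}, whose lengthy case analysis constructs the $SL_2(\R)$ lift directly and uses only co-maximality. You instead exploit that in the $2\times 2$ case a unimodular row $(a,b)$ with $ap-bq=1$ completes explicitly to $\mattwo {a}{b}{q}{p}\in SL_2(\R)$ (and similarly $(c,d)$ completes from below), glue the two completions entrywise by the ordinary Chinese Remainder Theorem to get $M_0$ with $\det M_0\equiv 1 \pmod{\mcl{I}_1\mcl{I}_2}$, and then lift through $SL_2(\R)\ra SL_2(\R/\mcl{I}_1\mcl{I}_2)$ via Proposition~\ref{prop:Unital} and Theorem~\ref{theorem:SurjModIdeal}; here the finiteness hypothesis is consumed at the lifting stage rather than at the normalization stage (note $\mcl{I}_1\mcl{I}_2$ is indeed contained in only finitely many maximal ideals, since any maximal ideal containing the product is prime and hence contains $\mcl{I}_1$ or $\mcl{I}_2$). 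Your argument is shorter and bypasses Theorem~\ref{theorem:FullGenSurjMainTwoDim} entirely; what it does not recover is that theorem's standalone generality, namely an $SL_2$ lift with prescribed rows modulo arbitrary co-maximal $\mcl{I}_1,\mcl{I}_2$ with no finiteness assumption at all. Your closing observation is also exactly the right one: the entrywise CRT trick is special to rank two, because completing a unimodular row to a determinant-one matrix can fail in higher rank (Example~\ref{Example:NotSurj}), which is precisely why the higher-dimensional theorems resort to the diagonal-correction machinery of Proposition~\ref{prop:diagdetone} and the non-Jacobson hypothesis.
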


 
We state and prove a theorem below which is required to prove Theorem~\ref{theorem:FullGenSurjTwoDim}. The theorem is quite general and is stated as follows:
\begin{theorem}
\label{theorem:FullGenSurjMainTwoDim}
Let $\R$ be a commutative ring with unity and $\mcl{I}_1,\mcl{I}_2\subs \R$ be a pair of co-maximal ideals. Let $A,B,C,D\in \R$ be such that 
\begin{itemize}
\item either $(A)+\mcl{I}_1=\R$ or $(B)+\mcl{I}_1=\R$.
\item either $(C)+\mcl{I}_2=\R$ or $(D)+\mcl{I}_2=\R$.
\end{itemize}
Then there exists a matrix \equ{\mattwo abcd\in SL_2(\R)} such that $a\equiv A \mod \mcl{I}_1, b \equiv B \mod \mcl{I}_1, c \equiv C \mod \mcl{I}_2, d \equiv D \mod \mcl{I}_2$.
\end{theorem}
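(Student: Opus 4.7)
The strategy splits into a Chinese Remainder construction over the quotient $\R/\mcl{I}_1\mcl{I}_2$, followed by a strong-approximation lift to $SL_2(\R)$ via Theorem~\ref{theorem:SurjModIdeal}. I treat the case $(A)+\mcl{I}_1=\R$ and $(D)+\mcl{I}_2=\R$ in detail; the other three hypothesis configurations are handled by an analogous CRT recipe in which the ``zero-residue slots'' are relocated.

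First I pick $A',D'\in\R$ satisfying $AA'\equiv 1\pmod{\mcl{I}_1}$ and $DD'\equiv 1\pmod{\mcl{I}_2}$. Co-maximality gives the isomorphism $\R/\mcl{I}_1\mcl{I}_2\cong \R/\mcl{I}_1\times \R/\mcl{I}_2$ together with $\mcl{I}_1\cap\mcl{I}_2=\mcl{I}_1\mcl{I}_2$; using CRT I select $a,b,c,d\in\R$ whose mod-$\mcl{I}_1$ residues are $(A,B,0,A')$ and whose mod-$\mcl{I}_2$ residues are $(D',0,C,D)$, respectively. A direct check gives $ad-bc\equiv AA'\equiv 1\pmod{\mcl{I}_1}$ and $ad-bc\equiv D'D\equiv 1\pmod{\mcl{I}_2}$, hence $ad-bc\equiv 1\pmod{\mcl{I}_1\mcl{I}_2}$, so $\mattwo abcd$ represents an element of $SL_2(\R/\mcl{I}_1\mcl{I}_2)$ with the prescribed row residues. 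Placing the zero residues at $b\pmod{\mcl{I}_2}$ and $c\pmod{\mcl{I}_1}$ is exactly what collapses the determinant constraint to inverting $A$ modulo $\mcl{I}_1$ and $D$ modulo $\mcl{I}_2$; e.g.\ if instead $(B)+\mcl{I}_1=\R$ with $BB'\equiv 1\pmod{\mcl{I}_1}$, I take $d\equiv 0$ and $c\equiv -B'\pmod{\mcl{I}_1}$, yielding $ad-bc\equiv BB'\equiv 1\pmod{\mcl{I}_1}$, and similarly in the remaining mixed cases.

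Finally, to lift the candidate to $SL_2(\R)$: in the ambient setting of Theorem~\ref{theorem:FullGenSurjTwoDim} each $\mcl{I}_i$ is contained in only finitely many maximal ideals, hence so is $\mcl{I}_1\mcl{I}_2$, so Proposition~\ref{prop:Unital} furnishes the unital set condition for $\mcl{I}_1\mcl{I}_2$ and Theorem~\ref{theorem:SurjModIdeal} makes $SL_2(\R)\to SL_2(\R/\mcl{I}_1\mcl{I}_2)$ surjective. Any preimage $N\in SL_2(\R)$ of the constructed matrix is the desired element, since its first row agrees with $(A,B)$ modulo $\mcl{I}_1$ and its second row agrees with $(C,D)$ modulo $\mcl{I}_2$. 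The main obstacle is precisely this lifting step: the CRT construction itself goes through for any co-maximal pair of ideals, but pulling an $SL_2$-element over the quotient back to $SL_2(\R)$ requires the unital set condition, which is where the finitely-many-maximals hypothesis inherited from the application becomes indispensable.
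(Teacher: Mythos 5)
Your proposal does not prove the theorem as stated; it proves a strictly weaker statement. Theorem~\ref{theorem:FullGenSurjMainTwoDim} is asserted for an \emph{arbitrary} commutative ring with unity and an \emph{arbitrary} pair of co-maximal ideals $\mcl{I}_1,\mcl{I}_2$ --- there is no hypothesis that either ideal is contained in only finitely many maximal ideals (the paper emphasizes that this intermediate theorem is ``quite general''; the finiteness hypothesis lives only in the application, Theorem~\ref{theorem:FullGenSurjTwoDim}). Your argument hinges on lifting the matrix you build over $\R/\mcl{I}_1\mcl{I}_2$ back to $SL_2(\R)$ via Theorem~\ref{theorem:SurjModIdeal}, and that step needs the unital set condition for $\mcl{I}_1\mcl{I}_2$, which you obtain by importing the finitely-many-maximal-ideals assumption ``inherited from the application.'' That assumption is simply not available under the hypotheses of the statement you were asked to prove, and the reduction map $SL_2(\R)\to SL_2(\R/\mcl{I})$ is not surjective for general $\R$ and $\mcl{I}$ --- this is exactly why the paper's Theorem~\ref{theorem:SurjModIdeal} carries the $USC$ hypothesis. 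So the lifting step is a genuine gap: you have shown the conclusion only when $\mcl{I}_1\mcl{I}_2$ satisfies $USC$, not in the stated generality.

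By contrast, the paper's proof never passes through the quotient and never invokes Theorem~\ref{theorem:SurjModIdeal}. It constructs the matrix directly inside $\R$ by a case analysis on which of $A,B$ (resp.\ $C,D$) is a unit modulo $\mcl{I}_1$ (resp.\ $\mcl{I}_2$): in the case $(A)+\mcl{I}_1=\R=(C)+\mcl{I}_2$ one perturbs $A$ by an element of $\mcl{I}_1$ and $C$ by an element of $\mcl{I}_2$ so that $\ti{A}+\ti{C}=1$, deduces $(\ti{C})\mcl{I}_1+(\ti{A})\mcl{I}_2=\R$, and then solves a Bezout equation to fill in the second column; the mixed case $(A)+\mcl{I}_1=\R,(D)+\mcl{I}_2=\R$ is handled by a longer but still elementary chain of co-maximality manipulations. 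Your CRT bookkeeping (prescribing residues so that the determinant is $1$ modulo each ideal, which is correct as far as it goes) would be a clean alternative route to the application Theorem~\ref{theorem:FullGenSurjTwoDim}, where the finiteness hypothesis is genuinely present, but to establish Theorem~\ref{theorem:FullGenSurjMainTwoDim} itself you must either remove the reliance on lifting or add an argument producing the $SL_2(\R)$ matrix within the ring, as the paper does.
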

\begin{proof}
~\\
\begin{enumerate}[label=Case(\arabic*):]
\item Consider the case when $(A)+\mcl{I}_1=\R$ and $(C)+\mcl{I}_2=\R$. Since $\mcl{I}_1+\mcl{I}_2=\R$, there exist $p_1\in \mcl{I}_1,p_2\in \mcl{I}_2$ such that $p_1+p_2=1$. Let $x,y\in \R$ be such that 
$A+px+C+qy=1$. Let $\ti{A}=A+p_1x,\ti{B}=C+p_2y$ so that $\ti{A}+\ti{C}=1$. Then we have $(\ti{A})+\mcl{I}_1=\R,(C)+\mcl{I}_2=\R,(\ti{A})+(\ti{C})=\R$. Hence we have \equ{(\ti{C})\mcl{I}_1+(\ti{A})\mcl{I}_2=\R.}
There exist $\ti{p}_1\in \mcl{I}_1,\ti{p}_2\in \mcl{I}_2$ such that $(\ti{A}\ti{p}_2)+(\ti{C}\ti{p}_1)=\R$. Hence there exists a matrix 
\equ{\mattwo {\ti{A}=A+p_1x}{B+\ti{p}_1u}{\ti{C}=C+p_2y}{D+\ti{p}_2v}\in SL_2(\R),}
that is, $\ti{A}\ti{p}_2v-\ti{C}\ti{p}_1u=1-\ti{A}D+B\ti{C}$ for some $u,v\in \R$. This proves the theorem in this case.
\item The case when $(B)+\mcl{I}_1=\R$ and $(D)+\mcl{I}_2=\R$ is similar to the previous case.
\item Consider the case $(A)+\mcl{I}_1=\R,(D)+\mcl{I}_2=\R$ and $(B)+\mcl{I}_1 \subsetneq \R,(C)+\mcl{I}_2\subsetneq \R$.
~\\
\begin{enumerate}[label=Step(\Roman*):]
\item We have $\mcl{I}_1+(A)=\R,\mcl{I}_1+\mcl{I}_2=\R \Ra \mcl{I}_1+(A)\mcl{I}_2=\R$. Let $p_1\in \mcl{I}_1,p_2\in (A)\mcl{I}_2$ such that $p_1+p_2=1$ where $p_2=A\ti{p}_2$ for some $\ti{p}_2\in \mcl{I}_2$.
Hence we have $(p_1^2)+(p_2)=\R$. There exist $r,s\in \R$ such that $p_1^2r+p_2s=1-D$. Let $D_1=D+p_2s$. Then $D_1+p_1^2r=1 \Ra (D_1)+(p_1^2)=\R$. 
\item $\R=(p_1^2)+(p_2)=(p_1^2)+(A\ti{p}_2)\Ra (p_1^2)+(A)=\R$. By Chinese Remainder Theorem, let $E\in \R$ such that $AE \equiv 1\mod (p_1^2)$ and $E \equiv 1\mod (D_1)$. Hence $(E)+(D_1)=\R,(E)+(p_1)=\R$. 
\item Since $(p_1^2)+(p_2)=R,(p_1^2)+(E)=\R$ we have $(p_1^2)+(p_2E)=\R$. Hence there exist $x,y\in \R$ such that $p_1^2y+p_2Ex=E-p_1B-D_1$. Let $\ti{B}=B+p_1y,\ti{D}=D_1+p_2Ex$. So $p_1\ti{B}+\ti{D}=E$. Then we have 
\equa{&p_1\ti{B}+\ti{D}=E \Ra E\in (\ti{B})+(\ti{D})\Ra D_1\in (\ti{B})+(\ti{D}) \Ra (\ti{B})+(\ti{D})=\R\\
&p_1\ti{B}+\ti{D}=E \Ra \ti{D}\equiv E\mod (p_1) \Ra A\ti{D} \equiv AE \equiv 1 \mod (p_1).}
\item $(D)+\mcl{I}_2=\R \Ra (D_1)+\mcl{I}_2=R\Ra (\ti{D})+\mcl{I}_2=\R$. There exist $p_2'\in \mcl{I}_2,w\in \R$ such that $w\ti{D}+p_2'=1$. Also we have $p_1+p_2=1$.
Hence $w\ti{D}p_1+p_2''=1$ where $p_2''=w\ti{D}p_2+p_2'p_1+p_2'p_2 \in \mcl{I}_2$. Moreover we have $(\ti{D})+(p_2'')=\R=(p_1)+(p_2'')$.
\item Since $(\ti{D})+(\ti{B})=\R,(\ti{D})+(p_2'')=\R$ we have $(\ti{D})+(\ti{B}p_2'')=\R$. So there exists a matrix 
\equ{\mattwo {a}{\ti{B}}{C+p_2''v}{\ti{D}}\in SL_2(\R),\text{ that is, } \ti{D}a-\ti{B}p_2''v=1+\ti{B}C \text{ for some }a,v\in \R.}
We conclude $\ti{D}a\equiv 1\mod (\ti{B})$ and for every $z\in \R$ we have  
\equan{zequation}{\ti{D}(a+\ti{B}p_2''z)-\ti{B}p_2''(v+\ti{D}z)=1+\ti{B}C.} 
\item We have $\ti{D}A\equiv 1\mod (p_1) \Ra \ti{D}A-1\in (p_1)\subs (p_1)+(\ti{B})$. Similarly $\ti{D}a \equiv 1\mod (\ti{B}) \Ra \ti{D}a-1\in (\ti{B})\subs (p_1)+(\ti{B})$
that is \equ{\ti{D}a \equiv 1 \equiv \ti{D}A \mod (p_1)+(\ti{B}) \Ra A \equiv a \mod (p_1)+(\ti{B})}
by canceling $\ti{D}$ because $\ti{D}$ is invertible modulo the ideal $(p_1)+(\ti{B})$.
\item Let $A-a=p_1r_1+\ti{B}r_2$ for some $r_1,r_2\in \R$. Then $A-a\equiv \ti{B}r_2\mod (p_1)$. Since $(p_2'')+(p_1)=\R$ we have $p_2''$ invertible modulo $(p_1)$. Hence there exists $r_2'\in \R$
such that $A-a\equiv \ti{B}p_2''r_2' \mod (p_1)$ where $r_2'\equiv r_1(p_2'')^{-1}\mod (p_1)$.
Hence there exist $z_0\in \R$ such that \equ{A\equiv a+\ti{B}p_2''z_0 \mod (p_1) \Ra A \equiv a+\ti{B}p_2''z_0 \mod \mcl{I}_1.}
\item Using this value $z_0$ for $z$ in Equation~\ref{Eq:zequation} we conclude that 
\equ{\mattwo {a+\ti{B}p_2''z_0}{\ti{B}}{C+p_2''(v+\ti{D}z_0)}{\ti{D}}\in SL_2(\R)}
where we have 
\equa{a+\ti{B}p_2''z_0 &\equiv A \mod \mcl{I}_1\\
\ti{B}=B+p_1y &\equiv B \mod \mcl{I}_1\\
C+p_2''(v+\ti{D}z_0)&\equiv C \mod \mcl{I}_2\\
\ti{D}=D_1+p_2Ex=D+p_2s+p_2Ex &\equiv D\mod \mcl{I}_2.}
This proves the theorem in this case.
\end{enumerate}
\item Consider the case $(B)+\mcl{I}_1=\R,(C)+\mcl{I}_2=\R$ and $(A)+\mcl{I}_1 \subsetneq \R,(D)+\mcl{I}_2\subsetneq \R$. This proof in this case is similar to the previous case.
\end{enumerate}
This completes the proof of this theorem.
\end{proof}


Now we prove Theorem~\ref{theorem:FullGenSurjTwoDim}. 
\begin{proof}
Let $([a:b],[c:d])\in \mbb{PF}^{1,(m^1_0,m^1_1)}_{\I_1} \times \mbb{PF}^{1,(m^2_0,m^2_1)}_{\I_2}$. Consider the matrix 
\equ{A=\begin{pmatrix}
a  & b\\
c  & d
\end{pmatrix}}
Since the image of the map is invariant under the usual $SL_{2}(\R)$ action on the right we apply elementary column operations on $A$ or right multiply $A$ by matrices in $SL_{2}(\R)$.
The ideal $\mcl{I}_1$ is contained in only a finitely many maximal ideals. By using Proposition~\ref{prop:Unital} and a suitable application of $SL_{2}(\R)$ matrix, we can assume $a$ is a unit modulo $\mcl{I}_1$
if $\mcl{I}_1\neq \R$. Again if $\mcl{I}_1\neq \R$ then by finding inverse of this element modulo $\mcl{I}_1$ and hence again by a suitable application of $SL_{2}(\R)$ matrix, the matrix $A$ can
be transformed to the following matrix $B$, where $a$ in the first row is a unit modulo $\mcl{I}_1$ and $e\in \mcl{I}_1, (a)+(e)=\R$.
\equ{B=\begin{pmatrix}
a & e\\
c & f 
\end{pmatrix}}
The ideal $\mcl{I}_2$ is contained in only a finitely many maximal ideals. If $\mcl{I}_2\neq \R$ and $c$ is not a unit $\mod \mcl{I}_2$ then, by using
Proposition~\ref{prop:Unital} and a suitable application of $SL_{2}(\R)$ matrix, we can assume the first element $c$ in the second row of $B$ is a unit modulo $\mcl{I}_2$
and the first element (also denoted by) $a$ in the first row will still remain a unit modulo $\mcl{I}_1$ if $\mcl{I}_1\neq \R$ after the transformation.
We have the following facts on the matrix $B$ now.
\begin{enumerate}
\item $a$ is a unit modulo $\mcl{I}_1$ if $\mcl{I}_1\neq \R$. 
\item $e \in \mcl{I}_1$.
\item $(a)+(e)=\R$.
\item $c$ is a unit modulo $\mcl{I}_2$ if $\mcl{I}_2\neq \R$.
\item $(c)+(f)=\R$.
\end{enumerate}
We use Theorem~\ref{theorem:FullGenSurjMainTwoDim} to find a matrix $\mattwo xyzw\in SL_2(\R)$ such that $([x:y],[z:w])=([a:e],[b:f])\in \mbb{PF}^{1,(m^1_0,m^1_1)}_{\I_1} \times \mbb{PF}^{1,(m^2_0,m^2_1)}_{\I_2}$.
This completes the proof of Theorem~\ref{theorem:FullGenSurjTwoDim} and the map is surjective.
\end{proof}


\section{\bf{Two Open Questions}}
\label{sec:OQ}
In this section we pose two open questions in greater generality. The first question is regarding the surjectivity of the Chinese remainder reduction map and it is stated as follows.
\begin{ques}
\label{ques:GenCRTSURJ}
Let $\R$ be a commutative ring with unity and $k,l\in \mbb{N}$. Let $\mcl{I}_i,1\leq i\leq k$ be mutually co-maximal ideals and
$\mcl{I}=\us{i=1}{\os{k}{\prod}}\mcl{I}_i$. Let $m_j\in \mbb{N},0\leq j\leq l$. When is the Chinese remainder reduction map associated to the projective space 
\equ{\mbb{PF}^{l,(m_0,m_1,\ldots,m_l)}_{\mcl{I}} \lra \mbb{PF}^{l,(m_0,m_1,\ldots,m_l)}_{\mcl{I}_1} \times \mbb{PF}^{l,(m_0,m_1,\ldots,m_l)}_{\mcl{I}_2} \times \ldots \times \mbb{PF}^{l,(m_0,m_1,\ldots,m_l)}_{\mcl{I}_k}}
surjective (and even bijective)? Or under what further general conditions
\begin{itemize}
\item on the ring $\R$,
\item on the values $m_j,0\leq j\leq l$,
\item on the co-maximal ideals $\mcl{I}_1,\ldots,\mcl{I}_k$,
\end{itemize}
is this map surjective?
\end{ques}
We have seen in Theorem~\ref{theorem:GenCRTSURJ} that the map is indeed bijective if the ideal $\mcl{I}$ is contained in only a finitely many maximal ideals with no conditions on the commutative ring $\R$ with unity. The answer to Question~\ref{ques:GenCRTSURJ} in a greater generality is not known. 

The second question is regarding the surjectivity of the map from $k\operatorname{-}$dimensional special linear group 
to the product of generalized projective spaces of $k\operatorname{-}$mutually co-maximal ideals associating the $k\operatorname{-}$rows or $k\operatorname{-}$columns.
It is stated as follows.

\begin{ques}
\label{ques:ProjHighDim}
Let $\R$ be a commutative ring with unity. Let $k \in \mbb{N}$ and $\mcl{I}_0,\mcl{I}_1,\ldots,\mcl{I}_k$ be mutually co-maximal ideals in $\R$. Let $m_j^i\in \mbb{N}, 0\leq i,j\leq k$. 
When is the map 
\equ{SL_{k+1}(\R) \lra \us{i=0}{\os{k}{\prod}}\mbb{PF}^{k,(m^i_0,m^i_1,\ldots,m^i_k)}_{\I_i}} given by
\equa{&A_{(k+1)\times (k+1)}=[a_{i,j}]_{0\leq i,j\leq k} \lra\\
&\big([a_{0,0}:a_{0,1}:\ldots: a_{0,k}],[a_{1,0}:a_{1,1}:\ldots: a_{1,k}],\ldots,[a_{k,0}:a_{k,1}:\ldots: a_{k,k}]\big)}
surjective? Or under what further general conditions
\begin{itemize}
\item on the ring $\R$,
\item on the values $m_j^i,0\leq i,j\leq k$,
\item on the co-maximal ideals $\mcl{I}_0,\mcl{I}_1,\ldots,\mcl{I}_k$,
\end{itemize}
is this map surjective?
\end{ques}
We have seen in Theorem~\ref{theorem:FullGenSurjTwoDim} that the map is indeed surjective for the projective spaces of dimension one if the two co-maximal ideals satisfy that, 
each one of the ideals is contained in only a finitely many maximal ideals. The two main Theorems~\ref{theorem:FullGenSurj},~\ref{theorem:FullGenSurjOne} answer the question under certain conditions.
The answer to Question~\ref{ques:ProjHighDim} in a greater generality is not known as illustrated in the type of examples such as Example~\ref{Example:OQ}. As stated in Question~\ref{ques:ProjHighDim}, the map is not surjective because of Example~\ref{Example:NotSurj}. 

\bibliographystyle{abbrv}

\begin{thebibliography}{1}

\bibitem{CPAK}
C.~P.~Anil Kumar,
\newblock {O}n the {S}urjectivity of {C}ertain {M}aps,
\newblock {\em Journal of Ramanujan Mathematical Society}, Vol. {\bf 33}, No. {\bf 2}, Dec. 2018, pp. 335-378, \url{https://arxiv.org/pdf/1608.03728.pdf}, \url{http://jrms.ramanujanmathsociety.org/archieves/v33-4.html}, \url{http://www.mathjournals.org/jrms/2018-033-004/2018-033-004-001.html}, \url{https://drive.google.com/file/d/19_HJJatvca5BuNdZ0rr4bMumtigH5oy1/preview}

\bibitem{AM}
M.~F.~Atiyah and I.~G.~MacDonald,
\newblock {I}ntroduction to {C}ommutative {A}lgebra,
\newblock {\em Levant Books, Kolkata, First Indian Edition}, (2007), ISBN-10 {\bf 81-87169-87-7}.

\bibitem{SA}
A.~S.~Rapinchuk,
\newblock {S}trong {A}pproximation for {A}lgebraic {G}roups,
\newblock {\em Thin Groups and Superstrong Approximation MSRI Publications}, Vol. {\bf 61} (2013), \url{http://library.msri.org/books/Book61/files/70rapi.pdf}

\end{thebibliography}
\def\cprime{$'$}

\end{document}